\theoremstyle{plain}
\newtheorem{thm}{Theorem}[section]
\newtheorem{cor}[thm]{Corollary}
\newtheorem{prop}[thm]{Proposition}
\newtheorem{lem}[thm]{Lemma}
\newtheorem{rem}[thm]{Remark}
\newtheorem{exam}[thm]{Example}
\newtheorem{definition}[thm]{Definition}
\DeclareMathOperator{\Mob}{\mbox{\rm{M\"ob}}}
\DeclareMathOperator{\Int}{\rm Int}
\DeclareMathOperator{\Ext}{\rm Ext}
\DeclareMathOperator{\Chat}{\widehat{\mathbb{C}}}
\title{Handlebodies  of infinite genus and  Schottky groups}
\author{Ara Basmajian}
\address{The Graduate Center, CUNY \\ 365 Fifth Ave., N.Y., N.Y., 10016 and Hunter College, CUNY \\ 695 Park Ave., N.Y., N.Y., 10065, USA}
\email{abasmajian@gc.cuny.edu}
\thanks{A.B. Partially supported by a PSC-CUNY Grant and Simons Collaboration Grant  (359956, A.B.)}
\author{Katsuhiko Matsuzaki}
\address{Department of Mathematics, School of Education, Waseda University,
Shinjuku, Tokyo 169-8050, Japan}
\email{matsuzaki@waseda.jp}
\thanks{K.M. Partially supported by Japan Society for the Promotion of Science (KAKENHI 23K25775 and 23K17656)}
\keywords{Schottky group, handlebody, infinite genus, uniformization, quasiconformal, non-planar end, pants decomposition}
\subjclass[2020]{Primary 30F40, 30F60, 57K20; Secondary 32G15, 30F20, 57K30}
\begin{document}
\begin{abstract}
In this paper, we begin an investigation of infinite genus handlebodies, infinitely generated Schottky groups, and related uniformization questions by giving appropriate definitions for them. There are uncountably many topological types of infinite genus surfaces with non-planar ends. We show that any such surface and 
any infinite genus handlebody can be topologically uniformized by an infinitely generated classical Schottky group. 
We next show that an infinite genus Riemann surface with non-planar ends admitting a bounded pants decomposition can be 
quasiconformally uniformized by a classical Schottky group. In addition, the conformal  equivalence class of 
such a uniformization is unique.
If the assumption of bounded pants decomposition is removed we supply examples of such Riemann surfaces that do not admit 
a quasiconformal uniformization by a Schottky group. 
\end{abstract}

\maketitle

%%%%%%%%%%%%%Introduction%%%%%%%

\section{Introduction and statement of results} 
\label{sec: intro}
  In this paper, we investigate infinitely generated Schottky groups, infinite genus handlebodies, and questions such as whether a Riemann surface of infinite topological type has a  Schottky uniformization, and whether 
it has a quasiconformal uniformization. We note that the collection of infinite genus topological surfaces with non-planar ends form a rich (uncountable) class of examples. We show that any topological surface of infinite genus and no planar ends can be realized as the quotient of a Schottky group. 
Of course,  we first must define what it means to be an infinitely generated Schottky group and an infinite genus handlebody.  Along the way, one natural class of Kleinian groups that arises are the so-called 
Schottky-like groups.

Consider a disjoint  finite  set of Jordan curves  $\{C_i ,C_i^{\prime}\}_{i}$ with common exterior  in the complex plane each   paired by a M\"obius transformation $g_i$ taking the exterior of $C_i$ to the interior of 
$C_i^{\prime}$. The group generated by the $\{g_i\}$ is called a (finitely generated) Schottky group -- it is arguably one of the simplest  Kleinian groups.  Among its salient features   are that  it is free, purely loxodromic, has totally disconnected   limit set,  the   Jordan curve exteriors  form a fundamental domain,  and the quotient  $3$-manifold is 
a finite genus handlebody. 
If the finite set of curves is replaced with an infinite set many of these features are no longer automatic. As a result,  we make some basic assumptions on the curve systems. In order to parse what definitions are necessary, we introduce an intermediate notion of Kleinian groups we call {\it Schottky-like}. When the group is finitely generated,  Schottky and Schottky-like are the same notions. In order to make precise what structures we are considering we first define some terms. 

A countable collection of Jordan curves 
$\{C_i ,C_i^{\prime}\}_{i \in \mathbb N}$ in \(\widehat{\mathbb{C}}\) is said to be an {\em admissible configuration} if
\begin{itemize} 
\item the Jordan curves in $\{C_i ,C_i^{\prime}\}_{i \in \mathbb N}$ are pairwise disjoint, and there is a 
point \(z_0 \in \widehat{\mathbb{C}}\) for which no Jordan curve in this set separates any other Jordan curve 
from \(z_0\), and
\item no point on one of the Jordan curves in $\{C_i ,C_i^{\prime}\}_{i \in \mathbb N}$ is the limit of points from  
the other Jordan curves.
\end{itemize}

We let \(\Ext(C)\) denote the component of 
$\widehat{\mathbb C}\setminus C$ that contains $z_0$, and \(\Int(C)\) the 
other component. When \(z_0= \infty \) these are the usual notions of interior and exterior of a Jordan curve.
We denote the set of all Jordan curves in the admissible configuration by 
$\mathcal{C}$, and the common exterior of all the 
curves by $\Ext (\mathcal{C})$. We remark that $\Ext (\mathcal{C})$ is connected but not necessarily open.
The largest open set contained in $\Ext (\mathcal{C})$ (i.e. the interior of $\Ext (\mathcal{C})$) is denoted by 
$\Ext (\mathcal{C})^{\circ}$.

\begin{definition}\label{Schottky}
{\rm  
A Kleinian group $G < \Mob$ is said to be {\it Schottky-like}  
if there exist M\"obius transformations $g_i \in G$
$(i \in \mathbb N)$,
and an admissible configuration of Jordan curves 
$\mathcal{C}=\{C_i ,C_i ^{\prime}\}_{i \in \mathbb N}$ with
$G =\langle g_i :i \in \mathbb N\rangle$, and 
$g_i(\Ext(C_i))=\Int( C_i^{\prime})$. If in addition
(i) the open set $\Ext (\mathcal{C})^{\circ}$  is a fundamental domain for $G$ and (ii) the limit set $\Lambda (G)$ is totally disconnected then  $G$ is called a {\it Schottky}  group.
A Schottky-like or Schottky group is {\it classical} if the Jordan curves are circles. }
\end{definition}  

%We remark that in the finitely generated setting the Schottky-like and Schottky categories are the same. 

First, we consider topological uniformization of infinite genus handlebodies. Similar to infinitely generated Schottky groups,
we need to define such handlebodies appropriately. We assume that a Schottky group $G$ acts also on
the $3$-dimensional upper half-space $\mathbb H^3$ properly discontinuously and consider the quotient of
$\mathbb H^3 \cup \Omega(G)$ by $G$, which is a $3$-manifold with boundary. Here, 
$\Omega(G)=\widehat{\mathbb C} \setminus \Lambda(G)$ is the region of discontinuity of $G$.

\vskip10pt
\noindent{\bf Theorem A} (Propositions \ref{prop: Schottky group iff handlebody} and \ref{prop: end space same}){\bf .}
{\it  An infinite genus handlebody can be topologically uniformized by an infinitely generated classical Schottky group. 
Conversely,  if  $G$ is an infinitely generated Schottky group 
satisfying condition $(\ast)$, then  $(\mathbb{H}^{3} \cup \Omega(G))/G$ is an infinite genus handlebody. Moreover, for a Schottky group  $G$
 the end space of the Riemann surface $\Omega(G)/G$ and that of the handlebody $(\mathbb H^3 \cup \Omega(G))/G$ are homeomorphic. }
\vskip10pt 
In Theorem A condition $(\ast)$  states that 
for any  subsequence $\{i_k\} \subset \mathbb N$,  if 
$C_{i_k}  \rightarrow x \ \mbox{\rm then }  C_{i_k}^{\prime}  \rightarrow x, \ {\rm as}\ k \to \infty.$

As a consequence of Theorem A, we have 
that  a handlebody is determined by its genus and space of ends (Corollary \ref{cor: handlebody determined}).
For a locally compact Hausdorff space $X$ in general, the space of ends of $X$ is defined as $\overline X \setminus X$ for
the Freudenthal compactification $\overline X$ of $X$ (which is also known as the Stoilow--Ker\'ekj\'art\'o compactification
for a Riemann surface $X$).

In our next theorem, we address the question of Schottky uniformization of Riemann surfaces. 

\vskip10pt
\noindent{\bf Theorem B} (Theorems \ref{thm: classical schottky uniformize} and \ref{thm: qc uniformization}){\bf .}
{\it  Let $R$ be a Riemann surface of infinite genus with no planar ends.  Then 
  there is a classical Schottky group $G$ such that 
 $\Omega(G)/G$ is homeomorphic to $R$.
  If in addition, $R$ has a bounded pants decomposition then there is a quasiconformal homeomorphism from $R$ to 
$\Omega(G)/G$.
}
\vskip10pt

Moreover, the uniformization of such a surface $R$ with a bounded pants decomposition
is rigid under conformal homeomorphisms.

\vskip10pt
\noindent{\bf Theorem C} (Theorem \ref{uniqueness}){\bf .}
{\it  Suppose $G$  is a classical Schottky group with respect to the admissible configuration of circles $\mathcal{C}$ where   
 $\Omega(G)/G$ has  a bounded pants decomposition. Let 
 $G^{\prime}$ be a Schottky-like group with admissible configuration of circles $\mathcal{C}^{\prime}$. If 
 $h:\Omega(G)/G \to \Omega(G')/G'$ is a conformal mapping which sends the  free homotopy classes of simple closed curves determined by 
$\mathcal C$  to those of $\mathcal{C}^{\prime}$,  then $G$ and $G'$ are conjugate by a M\"obius transformation.}

\vskip10pt
As a corollary to Theorem C we have
\vskip10pt 
\noindent{\bf Corollary D} (Corollary \ref{cor: uniqueness of handlebody}){\bf .}
{\it   Let $M$ be an infinite genus handlebody with  complete
meridian system $\mathcal A$ uniformized topologically by
a classical Schottky group $G$ with  admissible configuration $\mathcal C$ corresponding to $\mathcal A$. If $\Omega(G)/G$ admits a bounded pants decomposition,
then any conformal structure on $\partial M$ quasiconformally equivalent to $\Omega(G)/G$
relative to $\mathcal C$ determines the unique hyperbolic structure on $M$.}
\vskip10pt

Any finite genus Riemann surface is quasiconformally equivalent to a Riemann surface uniformized by a classical Schottky group (in fact, the Riemann surface itself is conformally equivalent to a (not necessarily classical) Schottky group. However, we show in section \ref{sec: no Schottky uniformization: examples} that there exist infinite genus  Riemann surfaces that do not admit a quasiconformal uniformization by a Schottky group. We in fact prove a necessary condition that such a Riemann surface 
admit a quasiconformal uniformization.

\vskip10pt 
\noindent{\bf Theorem E} (Examples \ref{Ex1} and \ref{Ex2}){\bf.}
{\it In each topological equivalence class of infinite genus  surface with non-planar ends, 
there exists a Riemann surface $S$ with associated Fuchsian group of the second kind such that  $S$ 
does not admit a quasiconformal uniformization by a Schottky group.
Moreover, there exists an infinite genus Riemann surface $R$ with only one end which is non-planar
that does not admit a quasiconformal uniformization by a Schottky group.}

\vskip10pt
\noindent{\bf Plan of the paper:}  In section \ref{sec: intro} we define the notions of an admissible system, Schottky and Schottky-like groups, and then state our main theorems. In section \ref{sec: definitions and basics} we lay out the needed definitions and basics.  In particular, we derive elementary but important properties of  a classical Schottky group.  In section \ref{sec: nest sequences} we supply examples to show the necessity of the definition of a Schottky group and the independence of the conditions that the exterior of the circles form a fundamental domain and that the limit set be totally disconnected.  We also define the space of ends and show that the space of ends can be identified with a particular closed subset of the limit set of the Schottky group. 

In section \ref{sec: handlebodies} we define what an  infinite genus handlebody is and discuss some of its properties. 
Then, we show that any infinite genus handlebody is topologically uniformized by a Schottky group.
In section \ref{sec: uniformization I} we prove a  topological Schottky uniformization theorem for all infinite genus surfaces with non-planar ends.  This is followed by  section \ref{sec: uniformization II} where we show that such an infinite genus Riemann surface having a bounded pants decomposition can be quasiconformally uniformized by a classical Schottky group. 
We also prove that such a uniformization is unique in its conformal equivalence  class.
Finally, in section \ref{sec: no Schottky uniformization: examples} we take up the question of whether there exist infinite genus Riemann surfaces that do not have a quasiconformal uniformization by a Schottky group. 

We note that sections \ref{sec: uniformization I}, \ref{sec: uniformization II}, and 
\ref{sec: no Schottky uniformization: examples} address analogues uniformization questions for infinite genus handlebodies.
It is not difficult to translate the conditions on a Riemann surface to
those on handlebodies of hyperbolic structure.
\vskip5pt

\centerline{\bf Acknowledgements}  This work began  during a one month visit to Tokyo, Japan in 2013. A.B. would like to thank  the Waseda Institute for Advanced Study  for  support during that visit as well as Katsuhiko Matsuzaki and Hiroshige Shiga for their generous hospitality.

%%%%%%%%%%Definitions and basics%%%%%%%
\section{Definitions and basics} \label{sec: definitions and basics}
Denote the orientation preserving M\"obius transformations of 
$\widehat{\mathbb{C}}$ by $\Mob$. 
For a discrete ({\it Kleinian}) group $G < \Mob$, we let 
$\Lambda(G) \subset \Chat$ denote the {\it limit set} and 
$\Omega(G)=\widehat{\mathbb C}\setminus \Lambda(G)$ the {\it region of discontinuity}.
An open subset $F \subset \Omega(G)$ is by definition 
a {\it fundamental domain} for $G$ if every point of 
$\Omega(G)$ is equivalent to some point of the closure
$\overline F$ and no two points of $F$ are equivalent under $G$.

In section \ref{sec: intro} we defined what an admissible system is as well as the notion of a  Schottky or  Schottky-like group associated to the  admissible  system. 
In this section some basics on admissible curve systems are given
in connection with Schottky-like group. We  refer to Maskit \cite[Sect.VIII.A]{M}
for some of these properties.

\begin{prop}\label{prop: basics0}
Let $\mathcal{C}=\{C_i ,C_i ^{\prime}\}_{i \in \mathbb N}$ be an 
admissible configuration of Jordan curves and let $G =\langle g_i :i \in \mathbb N\rangle$
be a subgroup of $\Mob$ whose generators $g_i$ satisfy $g_i(\Ext(C_i))=\Int( C_i^{\prime})$. 
Then the following hold
\begin{enumerate}

\item $G$ is purely loxodromic and free on a countable number of generators.

\item $\Ext (\mathcal{C})$ is precisely invariant under the identity in $G$.

\item If $\Ext (\mathcal{C})^{\circ}$ is non-empty, then $G$ is a Schottky-like group
with respect to $\mathcal C$.

\item If all Jordan curves in $\mathcal C$ are circles, then $G$ is a classical
Schottky-like group
with respect to $\mathcal C$.
\end{enumerate}
\end{prop}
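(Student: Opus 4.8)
The plan is to reduce all four assertions to a single ping--pong estimate. First I would record that admissibility forces the closed Jordan disks $B_i:=\overline{\Int(C_i)}$ and $B_i':=\overline{\Int(C_i')}$ (the sides not containing $z_0$) to be \emph{pairwise disjoint}: two of them can meet only if the bounding curves meet or one curve separates another from $z_0$, both excluded. The generator relation rewrites as $g_i(\Chat\setminus B_i)=\Int(C_i')$ and $g_i^{-1}(\Chat\setminus B_i')=\Int(C_i)$. To each symbol $s\in\{g_i^{\pm1}\}$ I attach a closed source disk $B(s)$ and an open target disk $U(s)$, so that $B(g_i)=B_i$, $U(g_i)=\Int(C_i')$ and dually for $g_i^{-1}$, with the convenient identity $\overline{U(s)}=B(s^{-1})$ and $s(\Chat\setminus B(s))=U(s)$. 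The key claim is that for every reduced word $w=s_1\cdots s_n$,
\[
w\bigl(\Chat\setminus B(s_n)\bigr)\subset U(s_1).
\]
I would prove this by induction on $n$: reducedness gives $s_{k+1}\neq s_k^{-1}$, hence $\overline{U(s_{k+1})}=B(s_{k+1}^{-1})$ is a closed disk \emph{distinct} from $B(s_k)$ and so disjoint from it; thus $\overline{U(s_{k+1})}\subset\Chat\setminus B(s_k)$, and the defining property of $s_k$ pushes the inductive image $U(s_2\cdots s_n)\subset U(s_2)$ forward into $U(s_1)$.

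Granting the claim, part (1) follows. Since $z_0$ lies outside every disk, $z_0\in\Chat\setminus B(s_n)$ and $z_0\notin U(s_1)$, so $w(z_0)\in U(s_1)$ forces $w(z_0)\neq z_0$ and $w\neq\mathrm{id}$; this gives freeness on the countable generating set. For pure loxodromy I would conjugate $w$ to a cyclically reduced word (peeling off matched end letters), using that loxodromy is conjugation invariant and that $w\neq\mathrm{id}$ keeps the reduced length positive. For a cyclically reduced word (the single generators included) one has $s_n\neq s_1^{-1}$, so $\overline{U(s_1)}$ is disjoint from $B(s_n)$, and the claim yields $w(\overline{U(s_1)})\subset U(s_1)$: thus $w$ carries a closed disk strictly into its interior. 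A M\"obius transformation that does this has an attracting fixed point inside, so it is neither elliptic nor parabolic, hence loxodromic.

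For part (2) I would use that $\Ext(\mathcal{C})$ lies in the exterior of every curve, so $\Ext(\mathcal{C})\subset\Chat\setminus B(s_n)$, while $U(s_1)$ is the interior of a curve and hence disjoint from $\Ext(\mathcal{C})$. Therefore $w(\Ext(\mathcal{C}))\subset U(s_1)$ is disjoint from $\Ext(\mathcal{C})$ for every nontrivial $w$, which is precise invariance under the identity. Part (3) is then immediate: the same computation shows the translates $\{g(\Ext(\mathcal{C})^{\circ})\}_{g\in G}$ of the nonempty open set $\Ext(\mathcal{C})^{\circ}$ are pairwise disjoint, so there can be no sequence $g_k\to\mathrm{id}$ with $g_k\neq\mathrm{id}$ (it would move a point of $\Ext(\mathcal{C})^{\circ}$ back into $\Ext(\mathcal{C})^{\circ}$); hence $G$ is discrete, i.e.\ Kleinian, and the remaining requirements of the definition are exactly the hypotheses, so $G$ is Schottky-like with respect to $\mathcal{C}$.

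Finally, for part (4) I would derive the hypothesis $\Ext(\mathcal{C})^{\circ}\neq\emptyset$ of part (3) from the roundness of the curves, which is where the infinitude of curves genuinely enters and which I expect to be the main obstacle. Fix one circle $C_1$. The second admissibility condition says no point of $C_1$ is a limit of points of the other curves, so $C_1$ is disjoint from the closed set obtained by taking the closure of the union of all curves in $\mathcal{C}$ other than $C_1$; since $C_1$ is compact, these two disjoint closed sets are a positive spherical distance $\delta$ apart. The exterior collar $A=\{\,z\in\Ext(C_1): d(z,C_1)<\delta/2\,\}$ then meets no other curve, and because a round disk disjoint from $\overline{\Int(C_1)}$ cannot contain a loop encircling $C_1$, the connected set $A$ must lie in the exterior of every curve; hence $A\subset\Ext(\mathcal{C})^{\circ}$ and this set is nonempty. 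Part (3) then makes $G$ Schottky-like, and since the curves are circles it is classical. The only genuinely delicate points are this nonemptiness argument and the implication ``maps a closed disk strictly inside itself $\Rightarrow$ loxodromic''; the ping--pong bookkeeping itself is routine once the pairwise disjointness of the closed disks is in hand.
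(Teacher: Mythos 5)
Your proof is correct, but it takes a genuinely different route from the paper's. For (1) and (2) the paper simply exhausts $G$ by the finitely generated subgroups $G_n=\langle g_1,\dots,g_n\rangle$, observes that each $G_n$ is a finitely generated Schottky group with $\Ext(\mathcal C_n)\supset\Ext(\mathcal C)$ precisely invariant, and passes to the union; the ping--pong is thus outsourced to the classical finite-rank theory (Maskit). You instead run the ping--pong directly on the infinite symbol set, which makes the argument self-contained at the cost of having to verify the disjointness of the closed disks $\overline{\Int(C)}$ and the standard facts that a cyclically reduced word carries $\overline{U(s_1)}$ strictly inside $U(s_1)$ and that a M\"obius map doing this is loxodromic -- all of which you handle or correctly flag. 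Part (3) is the same argument in both. The real divergence is (4): the paper never touches the planar set $\Ext(\mathcal C)^{\circ}$ there; it passes to $\mathbb H^3$, takes the convex region cut out by the totally geodesic planes spanning the circles (automatically non-empty), and gets discreteness from its precise invariance. You instead discharge the hypothesis of (3) by producing a non-empty open collar inside $\Ext(\mathcal C)$, using the second admissibility condition to get a uniform gap $\delta$ around $C_1$ and roundness to keep the collar connected. Both work; the paper's 3D argument is slicker and sidesteps the question of whether $\Ext(\mathcal C)^{\circ}$ is empty, while yours stays two-dimensional and in fact shows that for circles the hypothesis of (3) is automatic (your collar argument would even survive for general Jordan curves if you replace the full collar by a single small ball centered at a point of $C_1$ and connect it to $C_1$ inside the $\delta/2$-ball to decide which side of each other curve it lies on). The only cosmetic repair I would suggest is in that step: rather than the ``loop encircling $C_1$'' phrasing, note directly that if $A\subset\Int(C)$ then $C_1\subset\overline A\subset\overline{\Int(C)}$, contradicting the disjointness of $\overline{\Int(C)}$ and $\overline{\Int(C_1)}$.
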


\begin{proof}
(1) Let $G_n=\langle g_i :1 \leq i  \leq n \rangle$. Then, $G_n$ is a finitely generated Schottky group and
 the family $\{G_n\}$ gives the exhaustion of $G$, i.e. $G=\bigcup_n G_n$.
 Since the finitely generated Schottky group $G_n$ is purely loxodromic and free, so is $G$.
 
(2) Let $\mathcal{C}_n=\{C_i ,C_i ^{\prime}\}_{1 \leq i \leq n}$. Then $\Ext (\mathcal{C}_n)$ contains 
 $\Ext (\mathcal{C})$, and $\Ext (\mathcal{C}_n)$ is precisely invariant under the identity in $G_n$.
 This implies that $\Ext (\mathcal{C})$ is precisely invariant under the identity in $G$.

(3) Suppose that $G$ is not discrete. Then there exists a sequence $g_n \in G \setminus \{\rm id\}$ that converges to the identity, and 
$g_n(z)$ converges to $z$ locally uniformly on $\Ext (\mathcal{C})^{\circ}$. However, this contradicts the fact that $\Ext (\mathcal{C})$ is precisely
invariant under the identity. Thus, $G$ is discrete, and is Schottky-like.

(4) We regard $G$ acting on the upper half-space $\mathbb H^3$. 
When $C_i$ and $C_i ^{\prime}$
are circles, they span totally geodesic planes in the hyperbolic space $\mathbb H^3$, and
define a non-empty convex domain bounded by them. 
This domain is precisely invariant under the identity
for the same reason as before, which implies that $G$ is discrete, and is classical Schottky-like.
\end{proof}

\begin{lem} \label{lem: basics2}  Suppose 
  $G =\langle g_i :i \in \mathbb N\rangle$ is a classical Schottky-like group with respect to 
  the admissible configuration of circles $\mathcal{C}=\{C_i ,C_i ^{\prime}\}_{i \in \mathbb N}$. Then
  the accumulation set $\Lambda'$ of the configuration circles is a closed subset of the limit set $\Lambda(G)$.
  \end{lem}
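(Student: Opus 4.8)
I need to show two things about the accumulation set $\Lambda'$ of the configuration circles: (i) that $\Lambda'$ is closed, and (ii) that $\Lambda' \subset \Lambda(G)$. Let me think about what $\Lambda'$ is. The configuration circles are $\mathcal{C} = \{C_i, C_i'\}$. The accumulation set should be the set of points $x \in \widehat{\mathbb{C}}$ such that every neighborhood of $x$ meets infinitely many of the circles. By the second admissibility condition (no point on one circle is a limit of points from the other circles), the accumulation can only happen through shrinking of circles, not through circles piling up on a fixed circle.

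First, for (i) that $\Lambda'$ is closed: this should be essentially formal. The accumulation set of any family of sets is closed — if $x_n \to x$ with each $x_n \in \Lambda'$, then near $x$ there are points of $x_n$'s neighborhoods meeting infinitely many circles, and a diagonal argument shows $x$ itself has infinitely many circles accumulating. I would phrase $\Lambda'$ as $\bigcap_N \overline{\bigcup_{i \geq N}(C_i \cup C_i')}$ or as the set of limit points of sequences $z_k \in C_{i_k} \cup C_{i_k}'$ with $i_k \to \infty$ (distinct indices), and closedness follows from a standard diagonalization.

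The substantive part is (ii), that $\Lambda' \subset \Lambda(G)$. Take $x \in \Lambda'$, so there is a sequence of distinct circles (say $C_{i_k}$, WLOG) with points $z_k \in C_{i_k}$ converging to $x$. Since $G$ is classical Schottky-like, by Proposition \ref{prop: basics0}(4) it is discrete and acts on $\mathbb{H}^3$; the generators $g_i$ map $\Ext(C_i)$ to $\Int(C_i')$, so $g_i^{-1}$ maps $\Ext(C_i')$ to $\Int(C_i)$. The key geometric fact is that the diameters of the circles $C_{i_k}$ must tend to $0$: because the admissible configuration has the circles pairwise disjoint with common exterior and no accumulation on a fixed circle, the only way infinitely many distinct circles can cluster at $x$ is by shrinking. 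Hence the spherical diameter of $C_{i_k}$ goes to $0$ and $\Int(C_{i_k})$ shrinks to the point $x$.

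To conclude $x \in \Lambda(G)$, I would exhibit $x$ as a limit point of a $G$-orbit. Pick any base point $w_0 \in \Ext(\mathcal{C})^\circ$. For each $k$, the element $g_{i_k}$ sends $\Ext(C_{i_k})$ into $\Int(C_{i_k}')$, so $g_{i_k}(w_0) \in \Int(C_{i_k}')$; if instead the shrinking circles are the $C_{i_k}$, then $g_{i_k}^{-1}(w_0) \in \Int(C_{i_k})$, and since $\operatorname{diam} C_{i_k} \to 0$ with the enclosed interior collapsing to $x$, the orbit points $g_{i_k}^{-1}(w_0) \to x$. Since the $i_k$ are distinct, these are infinitely many distinct group elements, producing a genuine accumulation point of the orbit, whence $x \in \Lambda(G)$. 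The main obstacle is the diameter-collapse claim: I must rule out that a family of disjoint circles accumulates at $x$ while keeping diameters bounded below. This is where both admissibility conditions are used — the common-exterior/nesting condition forces the interiors to be disjoint (or nested in a controlled way) so infinitely many of bounded size cannot cluster, and the no-limit-on-a-fixed-circle condition prevents degenerate accumulation along an existing circle. I would formalize the diameter collapse via the fact that the spherical areas of the disjoint interior disks sum to at most the area of the sphere, forcing areas, hence diameters, to zero along any subsequence accumulating at a point.
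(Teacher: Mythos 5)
Your proposal is correct and follows essentially the same route as the paper: both arguments hinge on the spherical diameters of the configuration circles tending to zero (which you justify, as the paper implicitly does, by the disjointness of the interiors and the finite area of the sphere) and on placing inside each $\Int(C_{i_k})$ a point that witnesses membership in $\Lambda(G)$. The only cosmetic difference is that you use orbit points $g_{i_k}^{-1}(w_0)$ and the definition of the limit set, whereas the paper places a fixed limit point in each $\Int(C)$ and invokes closedness of $\Lambda(G)$; these are interchangeable.
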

  
  \begin{proof}
  There is a limit point of $G$ in $\Int(C)$ for each $C \in \mathcal C$. Since the spherical diameters of
  the configuration circles in $\mathcal C$ tend to zero, any accumulation point of the configuration circles
  is an accumulation point of the limit points. From the fact that the limit set of $G$ is closed,
  the statement follows. 
  \end{proof}

\begin{prop} \label{prop: basics1}  Suppose 
  $G =\langle g_i :i \in \mathbb N\rangle$ is a classical Schottky group with respect to 
  the admissible configuration of  circles $\mathcal{C}=\{C_i ,C_i ^{\prime}\}_{i \in \mathbb N}$. Then the fundamental domain $\Ext (\mathcal{C})^{\circ}$ for $G$ is connected.
%\begin{enumerate}
%\item $G$ is purely loxodromic and free on a countable number of generators.  
%\item $\Ext (\mathcal{C})$ is connected and precisely invariant under the identity in $G$.
%\item If in addition $G$ is a classical Schottky group with respect to
%$\mathcal{C}$, then the fundamental domain $\Ext (\mathcal{C})^{\circ}$ for $G$ is connected.
%\end{enumerate}
\end{prop}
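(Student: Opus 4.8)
The plan is to realize $\Ext(\mathcal{C})^{\circ}$ as the complement in $\Chat$ of an explicit compact set whose connected components are all ``small,'' and then to invoke a separation theorem from planar topology. First I would conjugate $G$ by a M\"obius transformation so that $z_0=\infty$; since $G$ is classical this keeps every configuration curve a circle and does not affect connectedness. After conjugation the admissibility hypotheses (pairwise disjointness together with the condition that no curve separates another from $z_0$) force the closed round disks $\overline{\Int(C)}$, $C\in\mathcal{C}$, to be pairwise disjoint, and one checks directly that
$$\Ext(\mathcal{C})^{\circ}=\Chat\setminus E,\qquad E:=\overline{\textstyle\bigcup_{C\in\mathcal{C}}\overline{\Int(C)}}=\Big(\bigcup_{C\in\mathcal{C}}\overline{\Int(C)}\Big)\cup\Lambda',$$
where $\Lambda'$ is the accumulation set of the configuration circles, the non-interior points of $\Ext(\mathcal{C})$ being exactly $\Lambda'$.

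Next I would pin down the connected components of the compact set $E$. The remaining admissibility condition (no point of a curve is a limit of the other curves) together with the fact that the spherical diameters of the circles tend to $0$ --- the observation already used in Lemma~\ref{lem: basics2} --- shows that $\Lambda'$ is disjoint from every disk $\overline{\Int(C)}$, so $E$ is the disjoint union of the disks and $\Lambda'$. I would then argue that each $\overline{\Int(C)}$ lies at positive distance from the rest of $E$: it is compact and disjoint from the closed set $\Lambda'$, and if infinitely many of the other disks came arbitrarily close to it, their shrinking diameters would produce a point of $\Lambda'$ touching $\overline{\Int(C)}$, a contradiction. Hence each disk is clopen in $E$ and is therefore a single connected component, while the remaining set $\Lambda'$ is totally disconnected because, by Lemma~\ref{lem: basics2}, $\Lambda'\subset\Lambda(G)$ and $\Lambda(G)$ is totally disconnected ($G$ being Schottky). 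Thus every connected component of $E$ is either a closed disk or a single point.

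Finally I would appeal to the classical fact from planar topology that a compact subset $K$ of the $2$-sphere separates two points $a,b\in\Chat\setminus K$ if and only if some connected component of $K$ separates them. If $\Ext(\mathcal{C})^{\circ}=\Chat\setminus E$ were disconnected, two of its points would be separated by $E$, hence by a single component of $E$; but a closed disk and a point each have connected complement in $\Chat$, so neither separates any pair of points. This contradiction shows $\Ext(\mathcal{C})^{\circ}$ is connected. I expect the main obstacle to be the middle step: verifying rigorously that the disks are \emph{isolated} components of $E$, so that the only ``large'' pieces of $E$ are the individually non-separating disks, since this is exactly where the admissibility conditions and the vanishing of the diameters must be combined; stating the component–separation theorem in the precise form needed is then the only remaining point to nail down.
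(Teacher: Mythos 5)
Your proof is correct, and it takes a genuinely (if mildly) different route from the paper's. Both arguments rest on the same two facts: that the non-open part of $\Ext(\mathcal{C})$ is exactly the accumulation set $\Lambda'$ of the circles, and that $\Lambda'$ is totally disconnected by Lemma~\ref{lem: basics2} together with the Schottky hypothesis. The paper then argues by contradiction: assuming a separation of $\Ext(\mathcal{C})^{\circ}$ into two open pieces $A$ and $\Ext A$, it ``fills in'' the closed discs $\overline{\Int(C_i)}$ and $\overline{\Int(C_i')}$ to promote this to a separation of $\widehat{\mathbb C}\setminus\Lambda'$, which is impossible since the complement of a totally disconnected compact set in the sphere is connected. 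You instead exhibit $\Ext(\mathcal{C})^{\circ}$ as the complement of the compact set $E$, prove that every component of $E$ is a single closed round disc or a single point, and invoke the classical theorem that a compact subset of $S^2$ separates two points only if one of its components does. Your positive-distance argument showing the discs are isolated (clopen) in $E$ is precisely the point the paper leaves implicit in its filling step, so your version buys rigor exactly where the paper is most informal; the price is appealing to the full component-separation theorem rather than only its special case for totally disconnected compacta (which is all the paper needs after filling in). Both proofs are legitimate and carry the same topological content.
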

 
\begin{proof}
 Let $\Lambda'$ be the accumulation set of the configuration circles for the classical Schottky group $G$.
 We observe that $\Ext (\mathcal{C})^{\circ}=\Ext (\mathcal{C}) \setminus \Lambda'$, 
 where $\Lambda'$ is
 totally disconnected by Lemma \ref{lem: basics2} and the property of the limit set $\Lambda(G)$
 of a Schottky group $G$. Suppose that $\Ext (\mathcal{C})^{\circ}$ is not connected.
 Then, we can find an open set $A \subset \widehat{\mathbb C}$ such that $\Ext (\mathcal{C})^{\circ}$ is contained 
in the union of
$A$ and its exterior $\Ext A=\Int(\widehat{\mathbb C} \setminus A)$, and 
both the intersections of these open sets with
$\Ext (\mathcal{C})^{\circ}$ are not empty. 
% As before, we take an open set $A \subset \widehat{\mathbb C}$ such that $\Ext (\mathcal{C})^{\circ}$ is contained 
% in the union of $A$ and $\Ext A$ with the non-empty intersections. 
Because the closed discs 
$\overline{\Int(C_i)}$ and $\overline{\Int(C'_i)}$ are not contained in
the boundary $\partial A$, by filling them into $\Ext (\mathcal{C})^{\circ}=\Ext (\mathcal{C}) \setminus \Lambda'$,
 we would have that $\widehat{\mathbb C} \setminus \Lambda'$ is contained 
 in the union of $A$ and $\Ext A$.
 However, $\widehat{\mathbb C} \setminus \Lambda'$ is connected since $\Lambda'$ is totally disconnected.
 Hence, we obtain that $\Ext (\mathcal{C})^{\circ}$ is connected by this contradiction.
% (3) Let $\Lambda'$ be the accumulation set of the configuration circles for the classical Schottky group $G$.
% We have that $\Ext (\mathcal{C})^{\circ}=\Ext (\mathcal{C}) \setminus \Lambda'$. Now,  by Lemma \ref{lem: basics2} 
% and the  fact $G$ has totally disconnected  limit set  we may conclude that    $\Lambda'$ is  totally disconnected. From this, we observe that $\Ext (\mathcal{C})^{\circ}$ is  connected.
 \end{proof}

 %%%%Nested sequences of circles and completeness}%%%
 
 \section{Nested sequences of  circles and completeness}
\label{sec: nest sequences}
 Suppose $G=\langle g_i: i \in \mathbb N \rangle$ is a classical Schottky-like group with respect to the circles in $\mathcal{C}=\{C_i ,C_i ^{\prime}\}_{i \in \mathbb N}$, and set $D=\Ext (\mathcal{C})^{\circ}$. In this section we give a combinatorial description  for the nested (infinite) sequences of $G$-translates of circles in  
 $\mathcal{C}$. Denote the set of symmetric generators, $\{g_i^{\pm 1}\}$, by  $\mathcal{S}$, and 
 $\Gamma_{\mathcal{S}} (G)$ to be  the  associated  Cayley graph.  Hence the vertices correspond to elements of $G$, and there is an edge from  $h_1$ to  $h_2$ if there exists 
 $g \in \mathcal{S}$ so that  $h_1 g=h_2$ (note that we use right multiplication by $g$). Thus 
 $\Gamma_{\mathcal{S}} (G)$ is an infinite valence Cantor tree, and we denote the word length of $h \in G$ by $|h|$. 
  The vertices and edges  of the graph have a direct correspondence to the action of the classical Schottky-like group which in turn  makes encoding nested sequences of circles transparent. 
  
  \begin{eqnarray*}
\text{{\bf VERTICES:}} \qquad \qquad \qquad \qquad  G& \longleftrightarrow & 
\text{G-translates of D} \\
h& \mapsto& hD \\
\text{{\bf EDGES:}} 
\qquad \{(h_1, h_1g):  g \in \mathcal{S}\}& 
\longleftrightarrow & 
\{\text{G-translates of circles in $\mathcal{C}$} \}\\
(h_1, h_1 g)& \mapsto& h_1D \cap h_1gD=h_1(D \cap gD) \\
\end{eqnarray*}

To summarize: Vertices correspond to translates of $D$, and edges  correspond to circles at the intersection of two translates of $D$. 

  Consider the set of G-translates of circles in 
$\mathcal{C}$. An infinite sequence of such circles $\{A_i\}_{i \in \mathbb N}$ is said to be {\it nested} if $A_i$ separates  $A_{i-1}$ from $A_{i+1}$, for each $i$. It is 
{\it maximal} if for each $i$ between $A_i$ and $A_{i+1}$ there are no translates of circles.                                                                               
Clearly every nested sequence of circles is contained in a maximal nested sequence.  We'll say that two nested sequences of circles are {\it equivalent} if they only differ by a finite number of circles. The property of being maximal is preserved by this equivalence relation. 
Our combinatorial model leads to

\begin{prop}
The set of asymptotic classes of geodesic rays  in 
$\Gamma_{\mathcal{S}}(G)$ are in one to one correspondence with the set of equivalence classes of maximal nested sequences of circles. 
\end{prop}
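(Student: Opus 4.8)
The plan is to identify the set of asymptotic classes of geodesic rays in $\Gamma_{\mathcal{S}}(G)$ with the space of ends of the tree, and then transport this, via the dictionary between edges and $G$-translates of circles, to equivalence classes of maximal nested sequences. First I would recall that, since $\Gamma_{\mathcal{S}}(G)$ is a tree, two geodesic rays are asymptotic if and only if they eventually coincide (share a common subray); hence each asymptotic class is represented by a unique reduced geodesic ray $\rho=(h_0=\mathrm{id},h_1,h_2,\dots)$ issuing from the base vertex. Applying the edge correspondence to the consecutive edges $(h_{i-1},h_i)$ of $\rho$ produces a sequence of $G$-translates of circles $A_i$, where $A_i$ and $A_{i+1}$ are both boundary circles of the translate $h_iD$. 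This defines the candidate map $\Phi$ from geodesic rays to sequences of circles, and the bulk of the proof is to show that $\Phi$ induces a bijection between asymptotic classes and equivalence classes of maximal nested sequences.

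The first substantive step is to verify that $\Phi(\rho)=\{A_i\}$ is always a maximal nested sequence. For nesting I would argue by induction on word length, using the standard nested-disc (ping-pong) property of Schottky groups applied inside the finitely generated subgroups $G_n$ occurring in the proof of Proposition~\ref{prop: basics0}: along the geodesic, the closed discs $\overline{\Delta_i}$ bounded by $A_i$ on the side containing $h_iD$ are strictly nested, $\overline{\Delta_{i+1}}\subset \Delta_i$, with $h_jD\subset \Delta_i$ for $j\ge i$ and $h_jD$ outside $\overline{\Delta_i}$ for $j<i$; this immediately yields that $A_i$ separates $A_{i-1}$ from $A_{i+1}$. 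For maximality I would use that $D=\Ext(\mathcal{C})^{\circ}$ contains no circle of $\mathcal{C}$ in its interior, so the translate $h_iD$ contains no $G$-translate circle in its interior. A translate circle separating $A_i$ from $A_{i+1}$ would have to be a boundary circle of $h_iD$ whose disc nests strictly between $\Delta_i$ and $\Delta_{i+1}$; but distinct boundary circles of a single translate bound disjoint discs, and any translate circle lying in one of the complementary discs does not separate them either, so no such circle exists.

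Next I would check that $\Phi$ descends to a well-defined and injective map on asymptotic classes. If two rays are asymptotic they eventually traverse the same edges, so the associated circle sequences agree from some index on and are therefore equivalent. Conversely, two rays that are not asymptotic branch apart permanently in the tree, so beyond their branch vertex every edge, and hence every circle, differs; the two sequences then differ in infinitely many terms and cannot be equivalent. Thus $\Phi$ is constant on asymptotic classes and separates distinct ones.

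The main obstacle is surjectivity, which runs the geometric-to-combinatorial direction. Given a maximal nested sequence $\{A_i\}$, each $A_i$ is a $G$-translate of a circle and so names an edge $e_i$ of $\Gamma_{\mathcal{S}}(G)$. The heart of the matter is an \emph{adjacency lemma}: two translate circles are boundary circles of a common translate $hD$ (equivalently, their edges share a vertex) precisely when one nests immediately inside the other with no translate circle separating them. Granting this, nesting and maximality of $\{A_i\}$ force $e_i$ and $e_{i+1}$ to share a vertex and to be distinct, so the $e_i$ trace out a non-backtracking path, i.e. a geodesic ray (after prepending or deleting finitely many edges to base it at the identity), whose image under $\Phi$ is the class of $\{A_i\}$. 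Proving the adjacency lemma is where the real work lies: it amounts to showing that the partial order on translate circles by disc-inclusion is recorded faithfully by the incidence structure of the Cayley tree. I would establish it by noting that each translate $hD$ is cut out exactly by its boundary circles, that these are in bijection with the tree edges incident to the vertex $h$, and that — by the same "no interior translate circle" property used for maximality — immediate nesting of translate circles is equivalent to their being adjacent boundary circles of one translate.
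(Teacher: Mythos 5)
Your argument is correct and takes essentially the same route as the paper's: the vertex/edge dictionary between $G$-translates of $D$ and translates of circles, non-backtracking of geodesic rays yielding nestedness and maximality, and the tree structure identifying asymptotic classes with equivalence classes of sequences. The paper's own proof is only a four-sentence sketch, so the extra detail you supply (in particular the adjacency lemma behind surjectivity) just makes explicit the steps the paper leaves implicit.
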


\begin{proof}
A geodesic ray $\gamma$ on  $\Gamma_{\mathcal{S}} (G)$ corresponds to  a sequence of $G$-translates of $D$ where consecutive translates differ by one of the generators. Since a geodesic ray cannot double back this gives a maximal nested sequence of circles. Since  
$\Gamma_{\mathcal{S}} (G)$ is a tree, a geodesic ray asymptotic to $\gamma$ eventually follows the same path 
as $\gamma$, and hence corresponds to an equivalent maximal nested sequence of circles. 

Conversely, a maximal nested sequence of circles corresponds to 
a unique asymptotic class of geodesic rays.
\end{proof}

We consider conditions for $D=\Ext (\mathcal{C})^{\circ}$ to be a fundamental domain.
We can also find such conditions in Maskit \cite[Sect.VIII.A]{M}.

\begin{lem} \label{lem: fund domain}
Suppose $G =\langle g_i :i \in \mathbb N\rangle$ is classical Schottky-like  
with respect to admissible configuration of circles $\mathcal{C}=\{C_i ,C_i ^{\prime}\}_{i \in \mathbb N}$. Then
  
 \begin{enumerate}
  \item $\Ext (\mathcal{C})^{\circ}$ is a fundamental domain if and only if every nested sequence of translated circles, $\{h_i C_{k_i}\}_{i \in \mathbb N}$, where $h_i \in G$ and $C_{k_i} \in \mathcal{C}$, has 
  spherical diameter that goes to zero,  as $i  \rightarrow \infty$.
  \item  If the hyperbolic distance in
  $\mathbb{H}^3$ between any two planes that span circles in 
  $\mathcal{C}$ is uniformly bounded from below by a positive constant, then 
  $\Ext (\mathcal{C})^{\circ}$ is a fundamental domain.
 
\end{enumerate}
\end{lem}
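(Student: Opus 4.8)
The plan is to prove (1) first and then deduce (2) by reducing it to the diameter criterion of (1). For (1), observe that by Proposition~\ref{prop: basics0}(2) the common exterior $\Ext(\mathcal{C})$ is precisely invariant under the identity, so no two points of $D=\Ext(\mathcal{C})^{\circ}$ are $G$-equivalent; hence $D$ is a fundamental domain if and only if every $w\in\Omega(G)$ is $G$-equivalent to a point of $\overline{D}$, i.e. $\Omega(G)\subseteq\bigcup_{h\in G}h\overline{D}$. I would analyze the failure of this covering by a coding argument: if $w\in\Omega(G)\setminus\overline{D}$, then $w$ lies inside exactly one circle of $\mathcal{C}$, and applying the corresponding generator either moves $w$ into $\overline{D}$ or sends it inside a new circle. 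Iterating produces, for each uncovered $w$, an infinite nested sequence of translated circles $\{A_i\}$ with $w\in\bigcap_i\overline{\Int(A_i)}$; by the correspondence between maximal nested sequences and asymptotic classes of geodesic rays in $\Gamma_{\mathcal{S}}(G)$ proved above, the tile lying between two consecutive circles is a single translate of $D$, and the nested intersection sits beyond every tile along the ray.

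For the ``if'' direction, suppose every nested sequence of translated circles has spherical diameter tending to $0$. Then for an uncovered $w$ the nested intersection $\bigcap_i\overline{\Int(A_i)}$ is the single point $w$. Since each $\Int(A_i)=h_i\Int(C_{k_i})$ contains a limit point of $G$ (as in the proof of Lemma~\ref{lem: basics2}, every $\Int(C)$ contains a limit point, and $\Lambda(G)$ is $G$-invariant), and these limit points converge to $w$, the closedness of $\Lambda(G)$ forces $w\in\Lambda(G)$, contradicting $w\in\Omega(G)$. Hence $D$ is a fundamental domain. For the converse I would argue contrapositively: if some nested sequence has diameter bounded below along a subsequence, then---and here the classical hypothesis that the curves are round circles is essential---the sets $\overline{\Int(A_i)}$ form a decreasing sequence of round discs of spherical radius bounded below, so their intersection $K$ is a round disc of positive radius, in particular with nonempty interior. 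Since $\Omega(G)$ contains the nonempty open set $D$, the limit set $\Lambda(G)$ is not all of $\widehat{\mathbb{C}}$ and therefore has empty interior; consequently the interior of $K$ contains a point $w\in\Omega(G)$. Because $K$ lies beyond every tile along the ray while off-ray tiles are separated from $K$ by one of the circles $A_i$, this $w$ is covered by no $G$-translate of $\overline{D}$, so $D$ is not a fundamental domain.

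For (2), I would reduce to the diameter criterion just established. Given a nested sequence $\{A_i\}$ of translated circles, two consecutive circles $A_i,A_{i+1}$ are the two boundary circles of one common tile $h_iD$, so $h_i^{-1}A_i$ and $h_i^{-1}A_{i+1}$ both lie in $\mathcal{C}$. Since $G$ acts on $\mathbb{H}^3$ by isometries, the hyperbolic distance between the planes spanning $A_i$ and $A_{i+1}$ equals the distance between two planes spanning circles of $\mathcal{C}$, hence is at least the constant $d_0>0$ from the hypothesis. A geodesic from a fixed basepoint into the region cut off by $A_i$ must cross the mutually $d_0$-separated planes spanning $A_1,\dots,A_i$, so the distance from the basepoint to the plane spanning $A_i$ tends to infinity. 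The standard fact that totally geodesic planes receding to infinity in $\mathbb{H}^3$ have boundary circles of spherical diameter tending to $0$ then gives $\operatorname{diam}(A_i)\to 0$, and part (1) completes the proof.

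The main obstacle is the converse in (1): converting a non-shrinking nested sequence into a genuinely uncovered point of the region of discontinuity. The subtle point is not merely that the nested intersection is nondegenerate but that it must meet $\Omega(G)$ and that such a point escapes every tile; both facts are secured precisely by the round-circle hypothesis, which forces the intersection to be a round disc of positive radius, together with the empty-interior property of the limit set. Verifying rigorously that interior points of $K$ lie in no $G$-translate of $\overline{D}$---using maximality of the nested sequence and the tree structure of $\Gamma_{\mathcal{S}}(G)$---is the step that requires the most care.
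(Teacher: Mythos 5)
Your proposal is correct and follows essentially the same route as the paper: uncovered points of $\Omega(G)$ generate infinite nested sequences whose shrinking intersection would have to be a limit point, a non-shrinking nested sequence limits onto a circle bounding a region that no translate of $\overline{\Ext(\mathcal{C})}$ reaches, and part (2) reduces to part (1) because consecutive circles of a nested sequence bound a common tile, so the spanning planes are uniformly separated and recede to infinity. The only cosmetic differences are that you locate an uncovered point in $\Omega(G)$ via the nowhere-density of the limit set where the paper asserts $\Int(C)\subset\Omega(G)$ outright, and that you route the tiling combinatorics through the Cayley-graph correspondence.
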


\begin{proof}
(1) Suppose that there exists a nested sequence of translated circles whose
spherical diameters do not go to zero. Then, these circles accumulates to a circle $C$.
In this case, we can see that $\Int(C)$ is included in $\Omega(G)$ and
no point in $\Int(C)$ is equivalent to a point 
in $\overline{\Ext (\mathcal{C})}$ under $G$. This shows that $\Ext (\mathcal{C})^{\circ}$ is 
not a fundamental domain for $G$. 

Conversely, suppose that for every nested sequence of translated circles 
$\{h_i C_{k_i}\}_{i \in \mathbb N}$, their 
spherical diameters go to zero as $i \rightarrow \infty$. We may assume that the nested sequence is maximal.
The accumulation point of this sequence is a singleton, which belongs to the limit set $\Lambda(G)$.
We observe that $\Lambda(G)$ comprises the set of accumulations of all such nested sequences of
translated circles and all translates of $\Lambda'$. Therefore, for every point $\zeta$ in 
$\Omega(G) \setminus \overline{\Ext (\mathcal{C})}$,
there exist a maximal nested sequence $\{h_i C_{k_i}\}_{i \in \mathbb N}$ and an $i \in \mathbb N$ 
uniquely such that
$\Int(h_iC_{k_i})$ contains $\zeta$ but $\Int(h_{i+1}C_{k_{i+1}})$ does not.
This shows that every point in $\Omega(G)$ is equivalent to $\overline{\Ext (\mathcal{C})}$ under $G$.
Since $\Ext (\mathcal{C})$ is precisely invariant under the identity in $G$ by Proposition \ref{prop: basics0},
$\Ext (\mathcal{C})^{\circ}$ is a fundamental domain for $G$.

(2) The argument is reduced to (1). 
For every maximal nested sequence of translated circles $\{h_i C_{k_i}\}_{i \in \mathbb N}$,
let $H_i$ be the hyperbolic plane in $\mathbb H^3$ that spans $h_i C_{k_i}$. Then, the hyperbolic distance
between $H_i$ and $H_{i+1}$ is not less than the infimum of
the hyperbolic distances 
between any distinct two planes that span circles in $\mathcal{C}$. By the assumption, 
this infimum is bounded from below by some $\delta>0$. This implies that 
the hyperbolic distance
between $H_i$ and $H_{i+j}$ for $j \in \mathbb N$ is not less than $j\delta$, and hence the 
spherical diameter of $h_i C_{k_i}$ goes to zero as $i \rightarrow \infty$.
\end{proof}

\begin{cor}\label{fundamental}
Let $G$ be a classical Schottky group with admissible configuration of circles $\mathcal C$.
Then for any $M >0$, the number of circles in 
$\{g(C) : C \in \mathcal{C},\ g \in G\}$ with spherical diameter greater than
$M$ is finite. 
\end{cor}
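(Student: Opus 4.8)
The plan is to argue by contradiction, using the laminar (nested‑or‑disjoint) structure of the translated circles together with Lemma \ref{lem: fund domain}(1). Suppose that for some $M>0$ there are infinitely many distinct circles $A_1,A_2,\dots$ in $\{g(C):C\in\mathcal{C},\ g\in G\}$ with spherical diameter greater than $M$. For each translated circle $A$ write $\Int(A)$ for the component of $\widehat{\mathbb C}\setminus A$ not containing the base point $z_0$. Since each $A_n=g_n(C_n)$ is a round circle (M\"obius images of circles are circles) of spherical diameter greater than $M$, an elementary spherical–geometry estimate shows that both complementary discs of $A_n$, and in particular $\Int(A_n)$, have spherical area bounded below by a constant $c(M)>0$ depending only on $M$.

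First I would record the two structural facts I need. (a) Because the $G$-translates $\{hD\}$ of $D=\Ext(\mathcal{C})^{\circ}$ tile the region of discontinuity with tree-like adjacency (the Cayley graph $\Gamma_{\mathcal{S}}(G)$ is a tree), distinct translated circles are pairwise disjoint and their interiors form a laminar family: for distinct $A,B$ one has $\Int(A)\subset\Int(B)$, or $\Int(B)\subset\Int(A)$, or $\Int(A)\cap\Int(B)=\emptyset$, the trichotomy being read off from the relative position of the two corresponding edges in the tree. (b) By Lemma \ref{lem: fund domain}(1), since $G$ is a Schottky group and hence $\Ext(\mathcal{C})^{\circ}$ is a fundamental domain, every nested sequence of translated circles has spherical diameter tending to $0$.

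Now I would apply the standard dichotomy that an infinite partially ordered set contains an infinite chain or an infinite antichain (infinite Ramsey theorem) to the family $\{A_n\}$ ordered by inclusion of interiors. In the antichain case the corresponding interiors are pairwise incomparable, hence pairwise disjoint by laminarity, giving infinitely many disjoint discs each of spherical area at least $c(M)$; this is impossible since the total spherical area of $\widehat{\mathbb C}$ is finite. In the chain case I extract an infinite strictly monotone subsequence of interiors $\Int(A_{n_1})\supsetneq\Int(A_{n_2})\supsetneq\cdots$; disjointness of the circles then shows that $A_{n_k}$ separates $A_{n_{k-1}}$ from $A_{n_{k+1}}$, so $\{A_{n_k}\}$ is a nested sequence of translated circles all of spherical diameter greater than $M$, contradicting fact (b). Both cases being absurd, the family is finite.

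I expect the main obstacle to be not any single estimate but the bookkeeping that makes the laminar structure precise and translates a ``chain'' into the paper's notion of a nested sequence: one must verify that distinct translated circles are genuinely disjoint, that the inclusion order on their interiors is governed by the tree $\Gamma_{\mathcal{S}}(G)$, and that an infinite descending chain of interiors yields a sequence satisfying the separation condition ``$A_i$ separates $A_{i-1}$ from $A_{i+1}$''. Once this framework is in place, the area lower bound $c(M)$ and the invocation of Lemma \ref{lem: fund domain}(1) are routine.
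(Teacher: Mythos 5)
Your proposal is correct and follows essentially the same route as the paper: argue by contradiction, use the finite area of the sphere together with the laminar/disjointness structure of the translated circles to force infinitely many of them to be nested, and then contradict Lemma \ref{lem: fund domain}(1). The paper compresses the chain/antichain dichotomy into one sentence, whereas you make it explicit via Ramsey's theorem, but the underlying argument is identical.
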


\begin{proof} 
Suppose there are infinitely  many distinct  
$C_i \in \mathcal{C}$ and $h_i \in G$ where $h_i (C_i)$ has spherical diameter bounded from below. Since the sphere has finite area and since the interiors of the $C_i$ must be disjoint,  it must be that for the index $i$ large the $h_i (C_i)$ are nested and thus converge to a circle $C$. This implies that 
$\Ext (\mathcal{C})^{\circ}$ is not a fundamental domain for $G$ by (1) of Lemma \ref{lem: fund domain}. 
However this contradicts to the requirement for a Schottky group.
\end{proof}

Consider a compact exhaustion of a topological manifold $M$.
An {\it end}  of $M$ is an equivalence class of nested connected complementary components of the exhaustion,
where  two such, $U_{1} \supset U_{2} 
\supset U_{3}\supset \cdots \supset U_{n} \supset \cdots$
and $V_{1} \supset V_{2} 
\supset V_{3}\supset \cdots \supset V_{n} \supset \cdots$ are equivalent
if for every $i$ there exists $j$ such that $U_i \supset V_j$ and $V_i \supset U_j$.
%, with the exception of finitely places, they  agree.  
The set of ends $\mathcal{E}$ has a natural topology -- a basis element associated to 
the finite sequence $U_{1} \supset U_{2} 
\supset U_{3}\supset \cdots \supset U_{n} $ being the set of ends that  have a representative of the equivalence class that begins with this finite sequence. In the case of orientable surfaces, the space of ends $\mathcal{E}$ is homeomorphic to a  closed subset of the
Cantor set where  the infinite genus  (non-planar) ends
 $\mathcal{E_{\infty}} $ form a closed subspace. 
An end is called {\it planar} if it has some planar neighborhood $U$. 
 A theorem due to Ian Richards \cite{R} says that the surface is determined 
 topologically by its genus, and the double space $\left(\mathcal{E}, \mathcal{E_{\infty}}\right)$. 

It is not difficult to see that the ends of a Schottky surface 
$\Omega(G)/G$ are 
 non-planar (infinite genus) (see also Lemma \ref{lem: geometric criteria}). 
We have seen in Lemma \ref{lem: basics2} that the accumulation set of the configuration curves, denoted $\Lambda^{\prime}$,  
is a closed subset of the totally disconnected limit set $\Lambda(G)$ when $G$ is a classical Schottky group. We prove the identification of the end space of $\Omega(G)/G$ with $\Lambda'$.

\begin{lem}\label{end}
Let $G$ be a Schottky group with admissible curve configuration $\mathcal{C}=\{C_i ,C_i^{\prime}\}_{i \in \mathbb N}$. Assume that 
for any  subsequence $\{i_k\} \subset \mathbb N$,
\begin{equation}\label{*}
{\rm if }\ C_{i_k}  \rightarrow x \ \mbox{\rm then } \ C_{i_k}^{\prime}  \rightarrow x, \ {\rm as}\ k \to \infty.
\tag{$\ast$}
\end{equation}
Then $\Lambda^{\prime}$ is homeomorphic to the end space  of 
$\Omega(G)/G$. In particular, the hypothesis is satisfied if $G$ is classical and the hyperbolic distances 
between the totally geodesic planes that span $C_i$ and $C_i^{\prime}$ are uniformly bounded  
from above for all $i$.
\end{lem}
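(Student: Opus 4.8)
The plan is to produce a natural continuous bijection $\Phi\colon \Lambda' \to \mathcal E(R)$ from the accumulation set onto the end space of $R=\Omega(G)/G$, and then upgrade it to a homeomorphism using compactness. I work throughout in the fundamental-domain picture: since $\Ext(\mathcal{C})^{\circ}$ is a fundamental domain with $\Ext(\mathcal{C})^{\circ}=\Ext(\mathcal{C})\setminus\Lambda'$ (Proposition \ref{prop: basics1}) and $\Lambda(G)\cap\overline{\Ext(\mathcal{C})}=\Lambda'$, the surface $R$ is obtained from the locally compact space $\overline{\Ext(\mathcal{C})}\setminus\Lambda'$ by identifying $C_i$ with $C_i'$ via $g_i$. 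Two facts are used repeatedly: $\Lambda'$ is compact and totally disconnected (Lemma \ref{lem: basics2} together with the Schottky property of $\Lambda(G)$), so it carries a neighborhood basis of clopen sets; and the spherical diameters of the configuration circles tend to $0$ (Corollary \ref{fundamental} with $g=\mathrm{id}$), so each $C_i,C_i'$ eventually shrinks to a point of $\Lambda'$.

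To define $\Phi$, fix $x\in\Lambda'$ and spherical balls $B_1\supset B_2\supset\cdots$ shrinking to $x$; the images $U_n=\pi((B_n\cap\Ext(\mathcal{C}))\setminus\Lambda')$ are neighborhoods of an end of $R$, and I set $\Phi(x)$ to be that end. The role of $(\ast)$ is \emph{localization of handles}: if $C_i\subset B_n$ for infinitely many $i$, extracting $C_{i_k}\to x'\in\overline{B_n}$ forces $C_{i_k}'\to x'$ by $(\ast)$, so a handle cannot join the region near $x$ to one near a different point. In particular, for distinct $x\neq y$ with disjoint balls $B_x,B_y$, only finitely many $i$ have $C_i$ and $C_i'$ split between $B_x$ and $B_y$; absorbing these finitely many handles into a compact core separates the ends over $x$ and $y$, giving injectivity. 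For surjectivity, any end is a nested sequence of complementary components of a compact exhaustion; since all non-compactness of $\overline{\Ext(\mathcal{C})}\setminus\Lambda'$ sits at the totally disconnected set $\Lambda'$, these components meet $\Lambda'$ in clopen pieces of diameter tending to $0$ and hence shrink to a single $x$. To get a homeomorphism I fix at each level $n$ a partition of $\Lambda'$ into finitely many clopen pieces of diameter $<\varepsilon_n\to 0$ with neighborhoods $W$ chosen, via localization, so that every configuration circle meeting a $W$ lies with its partner inside that same $W$ apart from finitely many exceptions placed in $K_n=\pi(\overline{\Ext(\mathcal{C})}\setminus\bigcup W)$; the connected components of $R\setminus K_n$ then biject with the level-$n$ pieces, realizing both $\mathcal E(R)$ and $\Lambda'$ as the same inverse limit of finite clopen decompositions. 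As $\Lambda'$ and $\mathcal E(R)$ are compact Hausdorff and $\Phi$ is a continuous bijection, it is a homeomorphism.

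For the ``in particular'' clause I show that a uniform upper bound $M$ on the hyperbolic distance between the planes $H_i,H_i'$ spanning $C_i,C_i'$ forces $(\ast)$. Suppose $C_{i_k}\to x$. By Corollary \ref{fundamental} the diameter $\delta_k=\mathrm{diam}(C_{i_k})\to 0$, so $H_{i_k}$ recedes to the boundary point $x$. Normalizing by a M\"obius transformation $T_k$ (a hyperbolic isometry) sending $C_{i_k}$ to the unit circle, the partner satisfies $d(T_kH_{i_k},T_kH_{i_k}')=d(H_{i_k},H_{i_k}')\le M$, so $T_kC_{i_k}'$ lies in a fixed compact family of circles $\mathcal K(M)\subset\mathbb{C}\setminus\{0\}$ depending only on $M$. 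Applying $T_k^{-1}$, whose bounded conformal distortion returns the unit circle to $C_{i_k}$ of diameter $\delta_k$, maps $\mathcal K(M)$ into a spherical neighborhood of $C_{i_k}$ of radius $\kappa(M)\,\delta_k$; since $\delta_k\to 0$ and $C_{i_k}\to x$, we get $C_{i_k}'\to x$, which is exactly $(\ast)$.

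I expect the main obstacle to be the localization step and its bookkeeping: proving that $(\ast)$ permits a choice of clopen neighborhoods in which all but finitely many handles are interior to a single piece, and that the resulting complementary components are connected and correspond bijectively to the clopen pieces at each level. Everything else — compactness and total disconnectedness of $\Lambda'$, the diameter decay, and the normalization estimate — is routine once this combinatorial-topological matching between handles and clopen pieces is firmly in place.
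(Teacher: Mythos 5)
Your proof of the main assertion is essentially the paper's proof. The paper also takes a compact exhaustion $\{K_n\}$ of $\widehat{\mathbb C}\setminus\Lambda'$ whose complementary discs $D_{n,j}$ shrink onto clopen pieces of $\Lambda'$, projects $D_{n,j}\cap\overline{\Ext(\mathcal C)}$ to the quotient, and uses $(\ast)$ exactly as your ``localization of handles'' step: to guarantee that the projected pieces $E_{n,j}$ are genuine end neighborhoods (no handle joins the region over $x$ to the region over a different point of $\Lambda'$). Your inverse-limit bookkeeping and the final ``continuous bijection from a compact Hausdorff space'' shortcut are fine and, if anything, spell out the step the paper compresses into one sentence. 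One small point of hygiene: the identities $\Ext(\mathcal C)^{\circ}=\Ext(\mathcal C)\setminus\Lambda'$ and $\Lambda(G)\cap\overline{\Ext(\mathcal C)}=\Lambda'$ that you quote are established in the paper only in the classical/circle setting, so you should either restrict to that case (which is all that is used later) or justify them for Jordan curves.

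The one genuine gap is in your argument for the ``in particular'' clause (which, for what it is worth, the paper asserts without proof). The family $\mathcal K(M)$ of circles exterior to the unit circle whose spanning planes lie within hyperbolic distance $M$ of the unit hemisphere is \emph{not} contained in a compact subset of $\mathbb C\setminus\{0\}$: the circles $|z-(R+1+\epsilon)|=R$ have inversive distance to the unit circle tending to $1+\epsilon$ as $R\to\infty$, hence plane-distance tending to $\mathrm{arccosh}(1+\epsilon)$, while passing through points arbitrarily close to $\infty$. So the step ``$T_k^{-1}$ maps $\mathcal K(M)$ into a $\kappa(M)\delta_k$-neighborhood of $C_{i_k}$'' fails as stated. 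The missing input is that the partner circles also satisfy $\mathrm{diam}(C_{i_k}')\to 0$ (automatic for disjoint round discs on the sphere, or by Corollary \ref{fundamental}); with that, the cleanest route avoids normalization altogether: for two disjoint round circles of spherical diameters $\delta,\delta'$ whose centers are at spherical distance $s$, the distance between their planes is comparable to $\log\bigl(s^2/(\delta\delta')\bigr)$ once $s$ is large relative to $\delta,\delta'$, so a uniform upper bound $M$ together with $\delta_k,\delta_k'\to 0$ forces $s_k\to 0$ and hence $C_{i_k}'\to x$.
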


\begin{proof} Let $\Lambda^{\prime}$ be the set of accumulation points of the configuration curves.  
We take a compact exhaustion $\{K_n\}$ of $\widehat{\mathbb {C}}\setminus\Lambda^{\prime}$ such that
the complement of $K_n$ consists of finitely many topological discs $D_{n,j}$ that has  non-empty 
intersection with $\Lambda^{\prime}$ and their boundaries $\beta_{n,j}=\partial D_{n,j}$
are disjoint from the configuration curves. Under the projection $\Omega(G) \to \Omega(G)/G$,
we take the images of $K_n$, $D_{n,j}$, and $\beta_{n,j}$ restricted to $\Omega(G) \cap \overline{\Ext(\mathcal C)}$, which are denoted by
$S_n$, $E_{n,j}$, and $\gamma_{n,j}$, respectively. Then $S_n$ are compact bordered surfaces in $\Omega(G)/G$
with the boundary components $\gamma_{n,j}$.
By virtue of the assumption that $C_{i_k} \rightarrow x$ if and only if 
$C_{i_k}^{\prime} \rightarrow x$ $(k \to \infty)$, we can see that
$E_{n,j}$ are end neighborhoods of $\Omega(G)/G$ for all sufficiently large $n$. Namely,
the boundary of $E_{n,j}$ is $\gamma_{n,j}$.

Any point on the end space of $\widehat{\mathbb {C}}\setminus\Lambda^{\prime}$ determined by a sequence $D_{1,j_1} \supset D_{2,j_2} \supset \cdots$
corresponds to a point on the end space of $\Omega(G)/G$ determined 
by a sequence $E_{1,j_1} \supset E_{2,j_2} \supset \cdots$. This gives a bijection between these end spaces.
Moreover, by the correspondence of the bases of neighborhoods $\{D_{n,j}\}$ and $\{E_{n,j}\}$,
this bijection is in fact a homeomorphism between the end spaces of
 $\widehat{\mathbb {C}}\setminus\Lambda^{\prime}$ and $\Omega(G)/G$. Finally since  
 $\Lambda^{\prime}$ is a closed  totally disconnected set, the end space of its complement,
 $\widehat{\mathbb {C}}\setminus\Lambda^{\prime}$, is precisely $\Lambda^{\prime}$.
\end{proof}

Our next examples show that the  hypotheses that 
$\Ext (\mathcal{C})^{\circ}$  is  a fundamental domain  and the  limit set is  totally disconnected 
  in the definition of a Schottky group are independent.

\begin{exam}[{\bf $D$ a fundamental domain but fat limit set}]
{\rm In this example we show that a Schottky-like group $G$ can have $\Ext (\mathcal{C})^{\circ}$  as   a fundamental domain but its limit set may not be totally disconnected. Since the choice of Jordan curves for the group $G$ does not effect the limit set, we in fact show that $G$ can not be Schottky with respect to any curve system. 

Choose an admissible  set of  circles 
$\mathcal{C}=\{C_i ,C_i ^{\prime}\}_{i \in \mathbb N}$ contained in the unit disc that accumulate on the  whole boundary  with the property that the hyperbolic distance between any two totally geodesic planes 
in $\mathbb H^3$ that span distinct circles is uniformly bounded from below. For each $ {i \in \mathbb N}$, choose a  M\"obius transformation $g_{i}$ that maps $C_i$ to $C_i ^{\prime}$ and takes $g_i(\Ext(C_i))=\Int( C_i^{\prime})$; setting 
$G=\langle g_i : i \in \mathbb N\rangle$ note that $G$ is Schottky-like. Clearly 
$\Lambda (G)$ contains the unit circle and hence condition (ii) in 
Definition \ref{Schottky} 
is not satisfied.  On the other hand, by
Lemma \ref{lem: fund domain}, $\Ext (\mathcal{C})^{\circ}$ is a fundamental domain. }
\end{exam}

\begin{exam}[{\bf Totally disconnected limit set but $D$ not a fundamental domain}] 
\label{incomplete}

{\rm We will construct  a Fuchsian Schottky group $G$  of the second kind which when viewed as acting on the Riemann sphere has an admissible set of circles where each circle  intersects the upper half-plane in a geodesic. Using hyperbolic geometry we give simple geometric criteria to show that this admissible set of circles has translates that form a nested sequence of circles  converging to a circle. Hence the limit set is totally disconnected but $\Ext (\mathcal{C})^{\circ}$ will not be a fundamental domain by Lemma  \ref{lem: fund domain}. 

%%%%%%%%%FIGURE%%%%%%%%%%%
\begin{figure}[t]
%\ShowGrid
%\leavevmode 
\begin{center}
\AffixLabels{\centerline{\epsfig{file =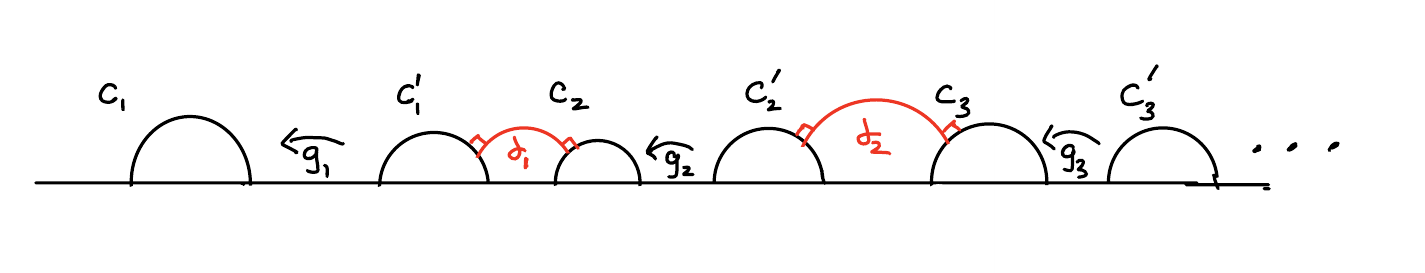,width=15.0cm} }}
\vspace{-0.5cm}
\end{center}
\caption{Admissible circles.} \label{fig: FIGCircles}
%\label{Angles match}
\end{figure}
%%%%%%%%%%%%%%%%%%
%%%%%%%%%%%%%%%

We choose a set of admissible circles 
$\mathcal{C}=\{C_i ,C_i ^{\prime}\}_{i \in \mathbb N}$ as 
depicted in Figure \ref{fig: FIGCircles}. These circles are perpendicular to 
$\mathbb{R}$, converge to $\infty$ to the right, and when viewed as geodesics in the upper half-plane are a positive distance apart. Let $\delta_{i}$ be the orthogeodesic 
from 
$C_{i} ^{\prime}$ to  $C_{i+1}$,
%$C_{i+1} ^{\prime}$ to  $C_i$, 
for each 
$i \in \mathbb N$, and assume that 
$\sum \ell(\delta_{i}) < \infty$, where $\ell$ stands for the hyperbolic length. We next construct the group. 
For each $i\in \mathbb{N}$, choose $g_{i} \in \text{PSL}(2, \mathbb{R})= \Mob(\mathbb R)$, so that
$g_i(\Ext(C_i^{\prime}))=\Int( C_i)$.  By postcomposing $g_{i}$ 
by a hyperbolic element whose axis is $C_i$, we may 
further assume that 
%$g_{i}(\delta_{i-1})$ smoothly meets 
$g_{i}(\delta_{i})$ smoothly meets   
$\delta_{i-1}$ in $C_i$.
%$\delta_{i}$ in $C_i^{\prime}$. 
We continue to call the element 
$g_i$. Hence, 
$\delta_{i-1}$ smoothly connects with $g_{i}(\delta_{i})$
%$\delta_{i}$ smoothly connects with $g_{i}(\delta_{i-1})$ 
to extend the geodesic segment.
We continue this process to  obtain the nested  sequence of 
geodesics, 
%$$
%\{C_{1}^{\prime}, g_{1}(C_{2} ^{\prime}), 
%g_{1}g_{2}(C_{3}^{\prime}),\ldots, 
%g_{1}g_{2}...g_{i}(C_{i+1}^{\prime}),\ldots\}
%$$
$$
\{C_{1}, g_{1}(C_{2}), 
g_{1}g_{2}(C_{3}),\ldots, 
g_{1}g_{2}\cdots g_{i}(C_{i+1}),\ldots\}
$$
which by construction has a geodesic ray that is perpendicular to all the circles in the nested sequence. See Figure \ref{fig:NestedCircles}. 

We set 
$G=\langle g_i : i \in \mathbb N\rangle$. 
Now, since 
$\ell(\delta_i)=\ell(g_{1}g_{2}\cdots g_{i}(\delta_{i}))$ for each $i$,
we may conclude
$$\sum \ell(g_{1}g_{2}\cdots g_{i}(\delta_{i}))< \infty,$$
and thus the nested  sequence we created converges to a geodesic, call it $C_{\infty}$.  Viewing the action of $G$ on the Riemann sphere,  we see that $G$ is a  classical 
Schottky-like group.  Now, since $G$ is Fuchsian, and acts discontinuously on a non-empty open subset of 
$\widehat{\mathbb{R}}$, we have that the limit set of $G$ is a totally disconnected set in
$\widehat{\mathbb{R}}$. On the other hand,  by item (1) of Lemma \ref{lem: fund domain}  
$\Ext(\mathcal C)^{\circ}$ is not a fundamental domain. 
}
\end{exam}

%%%%%%%%%FIGURE%%%%%%%%%%%
\begin{figure}[t]
%\ShowGrid
%\leavevmode 
\begin{center}
\AffixLabels{\centerline{\epsfig{file = 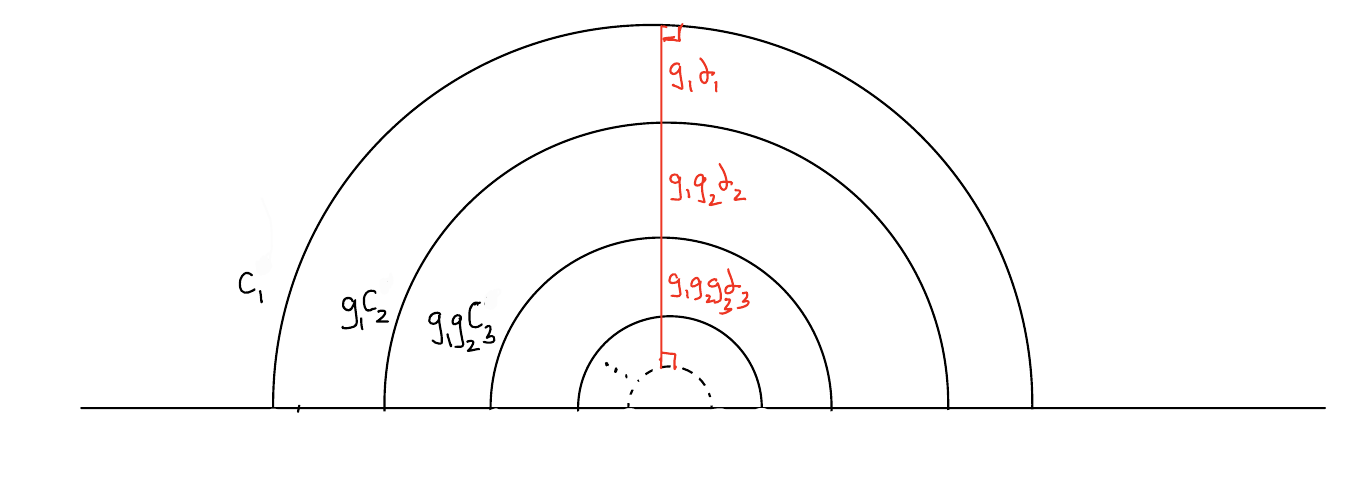,width=15.0cm}}}
\vspace{-1.0cm}
\end{center}
\caption{Nested circles.} \label{fig:NestedCircles}
\end{figure}
%%%%%%%%%%%%%%%%%%

\begin{rem}
{\rm
Let $G$ be an infinitely generated purely hyperbolic Fuchsian group of the second kind having a wandering 
half-plane (i.e. precisely invariant under the identity) bounding an interval of discontinuity. The group $G$ in Example \ref{incomplete} satisfies this condition.
The associated hyperbolic surface is the union of its convex core and the
half-plane. Regarding the existence and the construction of such a hyperbolic surface, see \cite{Ba} and \cite{B-S}.
By a theorem of 
Purzitsky \cite{Pur}, $G$ is a Schottky group with {\em some} admissible 
configuration of curves $\mathcal{C}$. See Button \cite{Bu} to expect that $G$ is further classical possibly with a different
$\mathcal{C}$, which are perpendicular to the real line.  
However, in Example \ref{incomplete}, we construct a particular $\mathcal{C}$
for which $G$ is not Schottky.
%Now considering the action of $G$ on 
%$\widehat{\mathbb{C}}$, we have that 
%there is a precisely invariant disc under the identity and the 
%orbit of this disc is disjoint from the $\Ext(C)$. 
}
\end{rem}

%\begin{rem} 
%For a classical Schottky group, if the above condition on the diameters is violated then there exist infinitely many distinct  $C_i \in \mathcal{C}$ and $h_i \in G$ where $h_i (C_i)$ converge to a circle $C$.  
%\end{rem}

 %%%%%%%INFINITE GENUS HANDLEBODIES%%%%%

\section{Infinite genus handlebodies}\label{sec: handlebodies}

Let ${\bf B}^{3} \subset \mathbb{R}^{3}$ be the closed $3$-ball, and let $M$ be a  3-manifold with boundary. 
$M$ is said to be a {\it handlebody of genus $g$}
($0 \leq g \leq \infty$) if $M$ is constructed  by the disc sum $M={\bf B}^3 \cup \bigcup T_i$
of $g$ solid tori $\{T_i\}_{i=1}^g$ along disjoint discs 
$D_{i} \subset \partial {\bf B}^3 \setminus E$ where $E \subset \partial {\bf B}^3$ is the accumulation set of the discs 
$\{D_i\}_{i=1}^g$ and is totally disconnected 
(see Figure \ref{fig: Handlebody}).
In the case of finite genus, see Gross \cite{Gr} for the
decomposition by the disc sum.
The genus $g$ is finite if and only if the accumulation set $E$ is empty.

A compressing disc in $M$ is a properly embedded disc $A$ with $\partial A \subset \partial M$. 
If $M$ is a handlebody of genus $g$
($0 \leq g \leq \infty$) in the above definition, then there exist $g$ compressing discs 
$\mathcal{A}=\{A_i\}_{i=1}^{g}$ (complete meridian disc system) in the solid tori $\{T_i\}_{i=1}^g$ 
so that if $N_i$ is an open cylindrical  
neighborhood of $A_i \subset T_i$ with $N_i \cap N_j =\emptyset$, for $i \neq j$, then 
$M \setminus \bigcup N_i$ is homeomorphic to ${\bf B}^3 \setminus \Lambda^{\prime}$, where 
$\Lambda^{\prime} \subset \partial {\bf B}^{3}$ is the (possibly empty) accumulation set of the compressing discs
$\{A_i\}_{i=1}^{g}$. This is also totally disconnected (the handlebody is finite genus if and only if this accumulation set is empty), and in fact,
$\Lambda^{\prime}$ is homeomorphic to $E$. 

A subtle point here for infinite genus handlebodies 
is that $\Lambda^{\prime}$ is the accumulation set of the compressing discs before they are identified to yield a $1$-handle whereas $E$ is the accumulation set of the discs where the solid tori are attached. In the group case,
$\Lambda^{\prime}$
corresponds to the accumulation set of the admissible configuration curves for the Schottky group $G$ whereas $E$ corresponds to the end space of the quotient by $G$.

%%%%%%%%%FIGURE%%%%%%%%%%%
\begin{figure}[t]
%\ShowGrid
%\leavevmode 
\begin{center}
\AffixLabels{\centerline{\epsfig{file =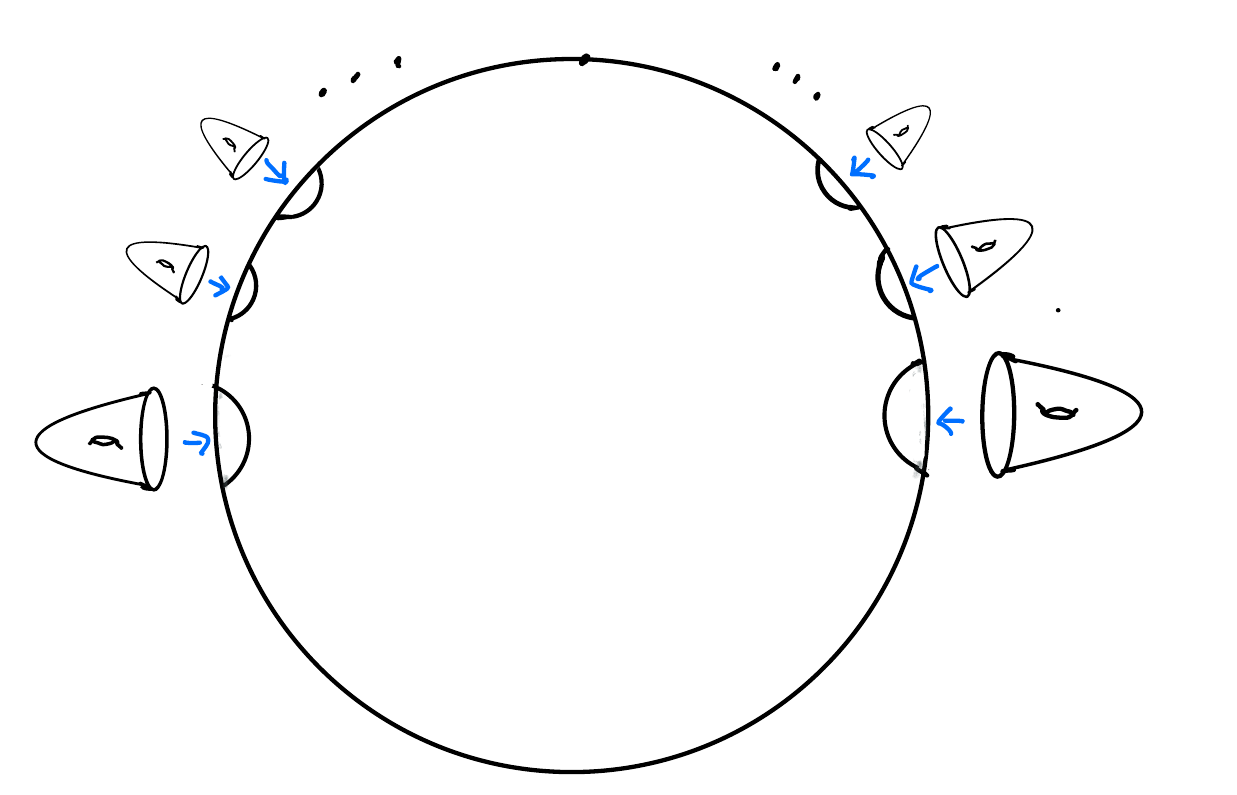,width=13.0cm} }}
\vspace{.0cm}
\end{center}
\caption{Handlebody Decomposition}\label{fig: Handlebody}

\end{figure}%%%%%%%%%%%%%%%%%%

\begin{prop} \label{prop: Schottky group iff handlebody}
If $G$ is a Schottky group with admissible curve configuration ${\mathcal C}=\{C_i,C_i'\}_{i=1}^g$ $(0 \leq g \leq \infty)$
satisfying condition \eqref{*} in Lemma \ref{end}, then 
$\left(\mathbb{H}^{3} \cup \Omega (G) \right)/G$ is a handlebody.
Conversely, a  handlebody can be topologically uniformized by such a classical Schottky group. 
\end{prop}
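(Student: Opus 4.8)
The plan is to prove the two directions separately, using the fundamental domain
$\mathcal{F} = \overline{\Ext(\mathcal{C})} \cap (\mathbb{H}^{3} \cup \Omega(G))$
as the bridge between the group picture and the handlebody picture. For the forward direction I would first record that, by Proposition \ref{prop: basics0}, $G$ is free and purely loxodromic, and by hypothesis it acts properly discontinuously on $\mathbb{H}^{3} \cup \Omega(G)$; since the circles $C_i, C_i'$ span pairwise disjoint totally geodesic half-planes $P_i, P_i'$, the region $\mathcal{F}$ exterior to all of these is a fundamental domain. I would then show that $\mathcal{F}$ is homeomorphic to ${\bf B}^3 \setminus \Lambda'$, a closed ball with the accumulation set $\Lambda'$ deleted from its boundary sphere, using Proposition \ref{prop: basics1} for connectedness of $\Ext(\mathcal{C})^{\circ}$ together with Lemma \ref{lem: basics2} and the total disconnectedness of $\Lambda'$ furnished by the Schottky hypothesis.

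Next, the quotient $(\mathbb{H}^{3} \cup \Omega(G))/G$ is obtained from $\mathcal{F}$ by gluing each face $P_i$ to $P_i'$ via $g_i$. Identifying two disjoint boundary discs of a ball raises the boundary genus by one and is equivalent to a disc sum with a solid torus, so each generator realizes exactly one summand $T_i$ in the disc-sum description of a handlebody, and $g$ generators produce genus $g$. The role of condition \eqref{*} is to make this handle structure extend continuously across $\Lambda'$: because $C_{i_k} \to x$ forces $C_{i_k}' \to x$, the two faces of each handle limit to the same point, so the attaching discs accumulate on a totally disconnected set $E$ homeomorphic to the end space by Lemma \ref{end}, precisely as the handlebody definition requires. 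I expect the main obstacle here to be the local analysis near $\Lambda'$, verifying that the quotient is genuinely modeled on the disc-sum handlebody near the ends; this is exactly where \eqref{*} is indispensable, since without it the paired faces could limit to distinct points and the quotient need not match the disc-sum model.

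For the converse, given a handlebody $M$ with totally disconnected accumulation set $E$ and complete meridian system $\mathcal{A} = \{A_i\}$, I would realize it as a Schottky quotient by an explicit construction. First I would embed $E$ as a totally disconnected compact subset of $\widehat{\mathbb{C}}$ and, guided by the tree/nested-circle correspondence of Section \ref{sec: nest sequences}, place for each handle a pair of circles $C_i, C_i'$ shrinking toward the appropriate point of $E$, chosen close together so that \eqref{*} holds automatically. I would then select Möbius pairings $g_i$ with $g_i(\Ext(C_i)) = \Int(C_i')$ whose spanning planes are pairwise separated by hyperbolic distance at least a fixed $\delta > 0$; by Lemma \ref{lem: fund domain}(2) this makes $\Ext(\mathcal{C})^{\circ}$ a fundamental domain, while the shrinking of the circles forces the limit set to be totally disconnected, so $G$ is a classical Schottky group satisfying \eqref{*}. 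The forward direction then identifies $(\mathbb{H}^{3} \cup \Omega(G))/G$ with $M$.

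The hard part of the converse is arranging all the Schottky conditions at once in the infinite-genus case: the uniform lower bound on inter-plane distances needed for the fundamental-domain property must be reconciled with the circles shrinking to $E$ needed for total disconnectedness of the limit set, and with the prescribed combinatorics of where handles accumulate. I expect this to require an inductive placement of the circles along the Cayley tree $\Gamma_{\mathcal{S}}(G)$, controlling spherical diameters and mutual separations generation by generation so that the nested-sequence criterion of Lemma \ref{lem: fund domain}(1) is met and the accumulation set is exactly the prescribed $E$.
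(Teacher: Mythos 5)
Your proposal is correct and follows essentially the same route as the paper: the forward direction identifies the fundamental domain with ${\bf B}^3\setminus\Lambda'$ and the face-pairings with $1$-handles, using condition $(\ast)$ and Lemma \ref{end} to match $\Lambda'$ with the end space $E$, while the converse realizes the meridian-disc boundaries as round circles accumulating on a copy of $E$ and appeals to Lemma \ref{lem: fund domain}(2), exactly as the paper does by deferring to the construction of Theorem \ref{thm: classical schottky uniformize}. The only remark worth adding is that the ``generation-by-generation'' control you anticipate in the converse is not needed: Lemma \ref{lem: fund domain}(2) requires uniform separation only of the original configuration circles, and placing the configuration on $\widehat{\mathbb R}$ (as in Theorem \ref{thm: classical schottky uniformize}) makes total disconnectedness of the limit set automatic.
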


\begin{proof} 
If $G$ is a Schottky group then cutting the $3$-manifold quotient 
$\left(\mathbb{H}^{3} \cup \Omega (G) \right)/G$ along discs $\{A_i\}_{i=1}^{g}$
whose boundaries are given by the admissible curves $\{C_i,C_i'\}_{i=1}^g$
at infinity which are identified by the generators yields the three ball ${\bf B}^3$ with $1$-handles attached. 
Lemma \ref{end} implies that the accumulation set (if it is non-empty)
$\Lambda'$ of these handles is homeomorphic to the set $E$ that need to be deleted from
$\partial {\bf B}^{3}$ to take the disc sum with the solid tori $\{T_i\}_{i=1}^g$. 
Hence, $\left(\mathbb{H}^{3} \cup \Omega (G) \right)/G$ is a handlebody. 

On the other hand, suppose $M$ is an infinite genus handlebody (the finite genus  case is well-known)
and let $E$ be its space of ends. 
We will build a classical Schottky group $G$ for which $\left(\mathbb{H}^{3} \cup \Omega (G) \right)/G$ is homeomorphic to $M$.
The proof is very similar to the proof of Theorem \ref{thm: classical schottky uniformize} in the next section 
so we just supply an outline.  
By cutting at the complete meridian disc system $\mathcal{A}=\{A_i\}_{i=1}^{g}$ associated with the disc sum 
with the solid tori $\{T_i\}_{i=1}^{g}$ constructing $M$, we have that
$M \setminus \bigcup N_i$ is homeomorphic to ${\bf B}^3 \setminus \Lambda^{\prime}$. Here,
$\{N_i\}$ defines disjoint closed Jordan domains $\{D_i,D_i'\}_{i \in \mathbb N}$ on $\partial {\bf B}^3$ with
$\Lambda^{\prime}$ as its accumulation set, which is homeomorphic to $E$. By applying a self-homeomorphism of
${\bf B}^3$, we may assume that all $D_i$ and $D_i'$ are round discs.
Let $C_i=\partial D_i$ and $C_i'=\partial D_i'$, and take the classical Schottky
group $G$ with the admissible configuration of circles $\mathcal C=\{C_i,C_i'\}_{i=1}^g$.
Then,
$\left(\mathbb{H}^{3} \cup \Omega (G) \right)/G$ is homeomorphic to $M$.
%The proof is very similar to the proof of Theorem \ref{thm: classical schottky uniformize} so we just supply an outline. 
%As is shown in the next proposition, the end spaces of the Riemann surface $\Omega(G)/G$
%and the handlebody $(\mathbb H^3 \cup \Omega(G))/G$ are the same for a Schottky group $G$.
%By constructing a classical Schottky group $G$ such that the end space of 
%$\left(\mathbb{H}^{3} \cup \Omega (G) \right)/G$
%is $K$, we obtain the handlebody homeomorphic to $M$.
\end{proof}

\begin{prop} \label{prop: end space same}
Let $M$ be a handlebody topologically uniformized by a classical Schottky group $G$. 
Then, the end space of the Riemann surface $\Omega(G)/G$ and that of $M = (\mathbb H^3 \cup \Omega(G))/G$ are homeomorphic. 
\end{prop}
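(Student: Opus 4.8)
The plan is to show that the end spaces of $\Omega(G)/G$ and of $M=(\mathbb H^3 \cup \Omega(G))/G$ are both homeomorphic to the accumulation set $\Lambda'$ of the configuration circles, and then conclude they are homeomorphic to each other. The first half is already done: Lemma \ref{end} identifies the end space of $\Omega(G)/G$ with $\Lambda'$ (here I am using that a classical Schottky group with uniformly separated planes automatically satisfies condition \eqref{*}, or more directly that a handlebody uniformization as built in Proposition \ref{prop: Schottky group iff handlebody} comes equipped with such a configuration). So the substance of the proof is to produce a homeomorphism between the end space of the $3$-manifold $M$ and $\Lambda'$, paralleling the argument of Lemma \ref{end}.

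First I would set up a compact exhaustion of $M$ adapted to $\Lambda'$. Recall from Lemma \ref{lem: basics2} that $\Lambda' \subset \Lambda(G)$ is closed and totally disconnected, sitting in $\partial \mathbb H^3 = \widehat{\mathbb C}$. I would take the same exhaustion $\{K_n\}$ of $\widehat{\mathbb C}\setminus \Lambda'$ by compacta as in Lemma \ref{end}, whose complementary pieces are topological discs $D_{n,j}$ meeting $\Lambda'$ with boundaries $\beta_{n,j}$ disjoint from the configuration circles, and then thicken this picture into $\mathbb H^3$. Concretely, for each boundary curve $\beta_{n,j}$ I would span a properly embedded disc (or use the totally geodesic half-space it bounds) to cut off a region $\widehat D_{n,j} \subset \mathbb H^3 \cup \Omega(G)$ accumulating only on the portion of $\Lambda'$ inside $D_{n,j}$. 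Projecting down to $M$ via $\mathbb H^3 \cup \Omega(G) \to M$ and restricting to $\overline{\Ext(\mathcal C)}$ yields a nested family of end neighborhoods $\widehat E_{n,j}$ of $M$, whose boundaries are compact surfaces; this is the exact $3$-dimensional analogue of the surfaces $E_{n,j}$ in Lemma \ref{end}.

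The heart of the argument is then the bijection of neighborhood bases: a descending sequence $D_{1,j_1}\supset D_{2,j_2}\supset\cdots$ determining a point of $\Lambda'$ corresponds to a descending sequence $\widehat E_{1,j_1}\supset \widehat E_{2,j_2}\supset\cdots$ determining an end of $M$, and conversely. Condition \eqref{*} is what guarantees that the $\widehat E_{n,j}$ are genuine end neighborhoods (that $C_{i_k}\to x$ forces $C_{i_k}'\to x$, so that the two discs paired by a generator are swallowed into the same end); without it an $1$-handle could straddle two distinct accumulation points and the correspondence would fail. As in Lemma \ref{end}, the compatibility of the two neighborhood bases upgrades the bijection to a homeomorphism, giving that the end space of $M$ is $\Lambda'$.

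Finally I would compose the two identifications: the end space of $\Omega(G)/G$ is homeomorphic to $\Lambda'$ by Lemma \ref{end}, and the end space of $M$ is homeomorphic to $\Lambda'$ by the construction above, hence the two end spaces are homeomorphic. The main obstacle I anticipate is purely technical rather than conceptual: verifying that the spanning surfaces in $\mathbb H^3$ can be chosen so their projections to $M$ are embedded with the claimed boundary behavior, and checking that the discs $\widehat D_{n,j}$ accumulate on $\Lambda'$ and nothing else. This rests on the fact that outside any neighborhood of $\Lambda'$ only finitely many translated circles are large (Corollary \ref{fundamental}), which keeps the geometry of the cutting surfaces under control; once that finiteness is invoked, the homeomorphism of bases follows formally as in the two-dimensional case.
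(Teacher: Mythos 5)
Your argument is correct in substance but takes a genuinely different route from the paper. You factor both end spaces through the accumulation set $\Lambda'$: the surface side by Lemma \ref{end}, the $3$-manifold side by a new three-dimensional analogue of that lemma built from an exhaustion of $\widehat{\mathbb C}\setminus\Lambda'$ thickened into $\mathbb H^3$. The paper instead works entirely downstairs and never mentions $\Lambda'$: it takes a compact exhaustion $\{S_n\}$ of $\Omega(G)/G$ with dividing boundary curves $\gamma_{n,j}$, caps each $\gamma_{n,j}$ with a compression disc in $M$ to obtain compact handlebodies $H_n$ exhausting $M$, and matches the complementary components $E_{n,j}$ of $S_n$ with the complementary components $\widehat E_{n,j}$ of $H_n$ directly; the homeomorphism of end spaces is then the formal correspondence of neighborhood bases. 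What the paper's route buys is independence from condition \eqref{*} and from Lemma \ref{end} altogether --- the proposition as stated only assumes $M$ is a handlebody topologically uniformized by $G$, and the direct argument needs nothing more. Your route, by contrast, must import \eqref{*} to invoke Lemma \ref{end}, and you are right to flag this; the clean justification is the one you give second (the configuration produced in Proposition \ref{prop: Schottky group iff handlebody} satisfies \eqref{*} by construction), not the first: note that the sufficient condition in Lemma \ref{end} is that the planes spanning the \emph{paired} circles $C_i$ and $C_i'$ be at uniformly \emph{bounded} distance from each other (bounded above), not that the planes be uniformly separated --- separation from below is the hypothesis of Lemma \ref{lem: fund domain}(2), which is a different matter. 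What your route buys in exchange is the explicit identification of the end space of $M$ with $\Lambda'$, which the paper obtains only by combining this proposition with Lemma \ref{end}. The technical step you defer (that the cutting surfaces over $\beta_{n,j}$ project to embedded surfaces in $M$ bounding genuine end neighborhoods) is exactly what the paper's use of compression discs spanning dividing curves accomplishes, so that gap is fillable.
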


\begin{proof}
Consider a compact exhaustion $\{S_n\}$ of $\Omega(G)/G$.
We may assume that the boundaries of the compact bordered surface $S_n$ are dividing closed curves $\gamma_{n,j}$.
Then, each $\gamma_{n,j}$ is the frontier of a connected component $E_{n,j}$ of the complement $(\Omega(G)/G) \setminus S_n$.
Moreover, each $\gamma_{n,j}$ spans a dividing compression disc $D_{n,j}$ in $M$. Let $H_n$ be
the portion of $M$ divided by $\{D_{n,j}\}$ that contains $S_n$, and let
$\widehat E_{n,j}$ be a connected component of the complement $M \setminus H_n$ 
that contains $E_{n,j}$.
We see that each $H_n$ is a compact handlebody and $\{H_n\}$ gives a compact exhaustion of 
$M$.

Any point on the end space of $\Omega(G)/G$ determined by a sequence $E_{1,j_1} \supset E_{2,j_2} \supset \cdots$
corresponds to a point on the end space of $M$ determined 
by a sequence $\widehat E_{1,j_1} \supset \widehat E_{2,j_2} \supset \cdots$. This gives a bijection between these end spaces.
Moreover, by the correspondence of the bases of neighborhoods $\{E_{n,j}\}$ and $\{\widehat E_{n,j}\}$,
this bijection is in fact a homeomorphism between the topological spaces.
\end{proof}

Since a handlebody can be realized topologically by the quotient of a Schottky group 
preserving the correspondence between the accumulation sets (Proposition \ref{prop: Schottky group iff handlebody}), Proposition \ref{prop: end space same} immediately implies the following corollary. 

\begin{cor} \label{cor: handlebody determined}
A handlebody is determined by its genus and space of ends. 
\end{cor}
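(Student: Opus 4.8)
The plan is to show that any two handlebodies $M_1$ and $M_2$ having the same genus $g$ and homeomorphic end spaces are homeomorphic, by transferring the question to their boundary surfaces, where Richards' classification theorem applies. First I would invoke Proposition \ref{prop: Schottky group iff handlebody} to realize each $M_k$ as $(\mathbb H^3 \cup \Omega(G_k))/G_k$ for a classical Schottky group $G_k$, together with the identification of the end space of $M_k$ with the accumulation set $\Lambda'_k$ of the configuration circles. In particular, the boundary $\partial M_k$ is the Schottky surface $\Omega(G_k)/G_k$, which has genus $g$ and, as recorded for Schottky surfaces just before Lemma \ref{end}, has only non-planar ends.

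Next I would apply Proposition \ref{prop: end space same}: the end space of the Riemann surface $\Omega(G_k)/G_k$ is homeomorphic to the end space of $M_k$. Hence $\partial M_1$ and $\partial M_2$ have the same genus $g$, have all ends non-planar, and have homeomorphic end spaces, each homeomorphic to the common end space of $M_1$ and $M_2$. Because the infinite-genus ends exhaust the whole end space in each case, the double end spaces $(\mathcal E, \mathcal E_\infty)$ of $\partial M_1$ and $\partial M_2$ coincide, so Richards' classification \cite{R} provides a homeomorphism $\varphi\colon \partial M_1 \to \partial M_2$.

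Finally I would promote $\varphi$ to a homeomorphism $M_1 \to M_2$. The cleanest route is to arrange $\varphi$ to carry the complete meridian system of $M_1$, namely the curves $\mathcal C_1$ on $\partial M_1$, to a complete meridian system on $\partial M_2$; cutting each $M_k$ along the associated meridian discs produces, in the notation of Proposition \ref{prop: Schottky group iff handlebody}, a copy of ${\bf B}^3 \setminus \Lambda'_k$ carrying the two families of round discs $\{D_i, D_i'\}$ to be reglued. A boundary homeomorphism respecting meridians matches these cut-open balls and their regluing data, and extending over the balls and then over the reattached $1$-handles yields the desired homeomorphism of handlebodies.

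The main obstacle is this last step: establishing that a handlebody is topologically determined by its boundary surface in the infinite-genus setting, that is, that the boundary homeomorphism extends over the $3$-manifold. Unlike the finite-genus case, one must control the behaviour near the accumulation set $\Lambda'$ and verify that $\varphi$ can be isotoped to respect the meridian systems simultaneously across all handles. Here the homeomorphism of end spaces from Proposition \ref{prop: end space same}, combined with the correspondence of accumulation sets $\Lambda' \leftrightarrow E$ from Proposition \ref{prop: Schottky group iff handlebody}, is precisely what makes the two handle configurations compatible and allows the extension to be carried out handle by handle while remaining controlled at the ends.
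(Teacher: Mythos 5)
Your route through the boundary surface has a genuine gap at exactly the step you flag as the ``main obstacle.'' Richards' theorem hands you \emph{some} homeomorphism $\varphi\colon\partial M_1\to\partial M_2$, but there is no reason for $\varphi$ to carry the meridian system $\mathcal C_1$ to curves bounding discs in $M_2$, let alone onto the chosen complete meridian system $\mathcal C_2$; already for a finite-genus handlebody most boundary homeomorphisms (e.g.\ a Dehn twist about a non-meridional curve) do not extend over the handlebody. So the substance of the corollary is concentrated in the assertion ``arrange $\varphi$ to carry the complete meridian system of $M_1$ to a complete meridian system of $M_2$,'' which you state rather than prove. What is needed is a change-of-coordinates statement for complete meridian systems on an infinite-genus surface compatible with the end identification; proving it amounts to cutting both surfaces open along their meridians, obtaining spheres minus a totally disconnected set together with paired discs, and matching these configurations by an ambient homeomorphism respecting the pairing and the accumulation sets. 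At that point the detour through Richards' classification has bought you nothing, since this matching is already the whole argument.

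The paper's proof avoids the boundary surface entirely. By Proposition \ref{prop: Schottky group iff handlebody}, a handlebody of genus $g$ with end space $E$ is homeomorphic to the Schottky quotient built from a configuration of round discs in $\partial{\bf B}^3$ accumulating exactly on a copy of $E$ (equivalently, to the model ${\bf B}^3\cup\bigcup T_i$ of the definition); since any two homeomorphic closed totally disconnected subsets of $\partial{\bf B}^3$, together with their attached disc families, are matched by a self-homeomorphism of ${\bf B}^3$, this model depends only on $(g,E)$ up to homeomorphism. Proposition \ref{prop: end space same} then identifies $E$ with the end space of $M$, and the corollary follows with no two-dimensional classification and no extension problem. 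To salvage your approach, replace the appeal to Richards by a direct construction of the boundary homeomorphism on the cut-open pieces, so that it respects the meridians by construction.
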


%%%%%%%%%%%%%%%%%%%%%%%%%%%%%%%%%%%%%%%Topological uniformization%%%%%%%%%%%

\section{Classical Schottky uniformization I: Topological uniformization}\label{sec: uniformization I}
Topologically, every surface of infinite genus with no planar ends admits
classical Schottky uniformization. We see later that this condition on non-planar end is necessary.

\begin{thm} \label{thm: classical schottky uniformize}
Let $R$ be a  surface of infinite genus with no planar ends. Then there is a classical
Schottky group $G$ such that $\Omega(G)/G$ is homeomorphic to $R$.
\end{thm}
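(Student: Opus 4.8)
The plan is to construct the classical Schottky group $G$ by reversing the cut-and-paste recipe used for handlebodies. First I would invoke Richards' classification: since $R$ has infinite genus and all ends non-planar, it is determined topologically by its end space $\mathcal{E}$ (with $\mathcal{E}_\infty = \mathcal{E}$). I would take a compact exhaustion $R = \bigcup_n R_n$ by bordered subsurfaces whose boundary curves are separating, chosen so that each complementary piece is an end neighborhood carrying infinite genus. The aim is to realize $R$ as obtained from a planar ``skeleton'' by attaching handles, where each handle corresponds to a generator $g_i$ of $G$.

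The key construction step is to produce an admissible configuration of circles $\mathcal{C}=\{C_i,C_i'\}$ in $\widehat{\mathbb{C}}$ together with pairing M\"obius maps so that $\Omega(G)/G$ realizes $R$. Concretely, I would embed a Cantor-type closed totally disconnected set $\Lambda'\subset\widehat{\mathbb C}$ homeomorphic to $\mathcal{E}$; then, working in a tree-indexed fashion paralleling the Cayley-graph model of Section~\ref{sec: nest sequences}, I would place nested families of round circles accumulating precisely on $\Lambda'$. The circles must be chosen with enough geometric separation---e.g.\ so that the totally geodesic planes spanning any two distinct circles are uniformly bounded apart in $\mathbb{H}^3$---which by item~(2) of Lemma~\ref{lem: fund domain} guarantees $\Ext(\mathcal C)^\circ$ is a fundamental domain, and by the second bullet of Lemma~\ref{end} (uniform upper bound on the distance between paired planes) guarantees condition~\eqref{*}. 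Together with the totally disconnected limit set coming from a genuine Cantor accumulation pattern, this makes $G$ a classical Schottky group.

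Having built $G$, I would verify the topological type of $\Omega(G)/G$ matches $R$. By Lemma~\ref{end}, the end space of $\Omega(G)/G$ is homeomorphic to $\Lambda'$, which I arranged to be homeomorphic to $\mathcal{E}$; and every end of a Schottky surface is non-planar (infinite genus) as noted before Lemma~\ref{end}. Since both $R$ and $\Omega(G)/G$ are infinite genus with all ends non-planar and with homeomorphic end spaces, Richards' theorem forces them to be homeomorphic. This reduces the whole theorem to the single assertion that the circle configuration can be arranged to realize an arbitrary prescribed totally disconnected end space.

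The main obstacle, and the part deserving the most care, is the inductive placement of circles so that the accumulation set $\Lambda'$ is exactly the prescribed $\mathcal{E}$ (no more, no less) \emph{while simultaneously} keeping the spanning planes uniformly separated and the paired circles' planes uniformly close. These two metric demands pull in opposite directions near accumulation points, since circles must shrink to accumulate on $\Lambda'$ yet individual pairs must stay geometrically comparable; the resolution is to choose the pairing maps to act ``locally'' near each point of $\Lambda'$, rescaling the separation budget at each level of the tree so that within a level the hyperbolic separation stays bounded below and each $C_i$ sits close to its partner $C_i'$. I expect this bookkeeping---translating the combinatorial end data into a concrete self-similar circle-packing in $\widehat{\mathbb{C}}$---to be the technical heart of the argument, and it is precisely the construction deferred to and shared with the handlebody proof of Proposition~\ref{prop: Schottky group iff handlebody}.
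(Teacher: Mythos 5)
Your overall architecture matches the paper's: realize the end space $\mathcal{E}$ of $R$ as a closed totally disconnected set $\Lambda'$, build an admissible configuration of circles accumulating exactly on $\Lambda'$, verify the Schottky conditions via Lemma \ref{lem: fund domain}\,(2) and condition $(\ast)$ of Lemma \ref{end}, and finish with Richards' classification. That assembly is correct. But the one step that carries essentially all the content of the theorem --- actually producing a circle configuration whose accumulation set is a prescribed closed totally disconnected set while the spanning planes stay uniformly separated and each paired $C_i$, $C_i'$ stay uniformly close --- is exactly the step you do not execute. You correctly identify it as the technical heart and then defer it to ``the construction \ldots\ shared with the handlebody proof of Proposition \ref{prop: Schottky group iff handlebody},'' but in the paper that proposition defers to \emph{this} theorem, so the deferral is circular: nothing in your write-up discharges the obligation. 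The phrase ``rescaling the separation budget at each level of the tree'' gestures at a self-similar construction without showing that the two competing metric demands can in fact be met simultaneously near every point of $\Lambda'$.

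The paper resolves this with a device worth noting, because it makes the bookkeeping disappear. Place $\Lambda'$ on $\mathbb{R}$ and work with Fuchsian data: for each bounded complementary interval $I$ of $\mathbb{R}\setminus\Lambda'$, take a hyperbolic $h\in\Mob(\mathbb{R})$ fixing the two endpoints of $I$, choose a single pair of circles $C,C'$ orthogonal to $\mathbb{R}$ inside a Ford fundamental domain of $\langle h\rangle$ with the separations large relative to the radii, pick $g$ with $g(\Ext(C))=\Int(C')$, and use the conjugated family $g_n=h^ngh^{-n}$ with circles $C_n=h^n(C)$, $C_n'=h^n(C')$ for $n\in\mathbb{Z}$. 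Because the entire configuration attached to $I$ is a single M\"obius orbit of one pair, the hyperbolic distances between spanning planes within that family are automatically uniform (bounded below for distinct circles, bounded above for paired ones), and the circles accumulate precisely at the two endpoints of $I$; running over all complementary intervals gives accumulation set exactly $\Lambda'$, after which Lemmas \ref{lem: fund domain} and \ref{end} apply as you intended. This explicit construction is what your proposal needs and lacks.
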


\begin{proof}
Since $R$ has no planar ends, the space of ends of $R$ is homeomorphic to
a closed subset $\Lambda'$ of the Cantor set. Put $\Lambda'$ on the real line $\mathbb R$ and
consider each bounded open interval $I$ of the complement $\mathbb R \setminus \Lambda'$.
Let $h$ be a hyperbolic M\"obius transformation in $\Mob(\mathbb R)={\rm PSL}(2,\mathbb R)$ 
having the two end points of $I$
as the fixed points and choose two circles $C$ and $C'$ in the Ford fundamental domain of $\langle h \rangle$ 
so that
(i) the closures of ${\rm Int}\,C$ and ${\rm Int}\,C'$ are disjoint; (ii) $C$ and $C'$ are orthogonal to
$\mathbb R$; (iii) the distance between $C$ and $C'$ and the distances from $C$ and $C'$ to the boundary of the
fundamental domain are relatively large to the radii of $C$ and $C'$.
Then take a hyperbolic element $g$ of $\Mob(\mathbb R)$ such that
$g({\rm Ext}(C))={\rm Int}(C')$ and define $g_n=h^n g h^{-n}$, $C_n=h^n(C)$ and $C'_n=h^n(C')$ 
for each $n \in \mathbb Z$. The accumulation points of these circles $\{C_n,C'_n\}_{n \in \mathbb Z}$
are exactly the two end points of $I$. See Figure \ref{fig: EndSpace}.

%%%%%%%%%FIGURE%%%%%%%%%%%
\begin{figure}[h]
%\ShowGrid
%\leavevmode 
\begin{center}
\AffixLabels{\centerline{\epsfig{file = 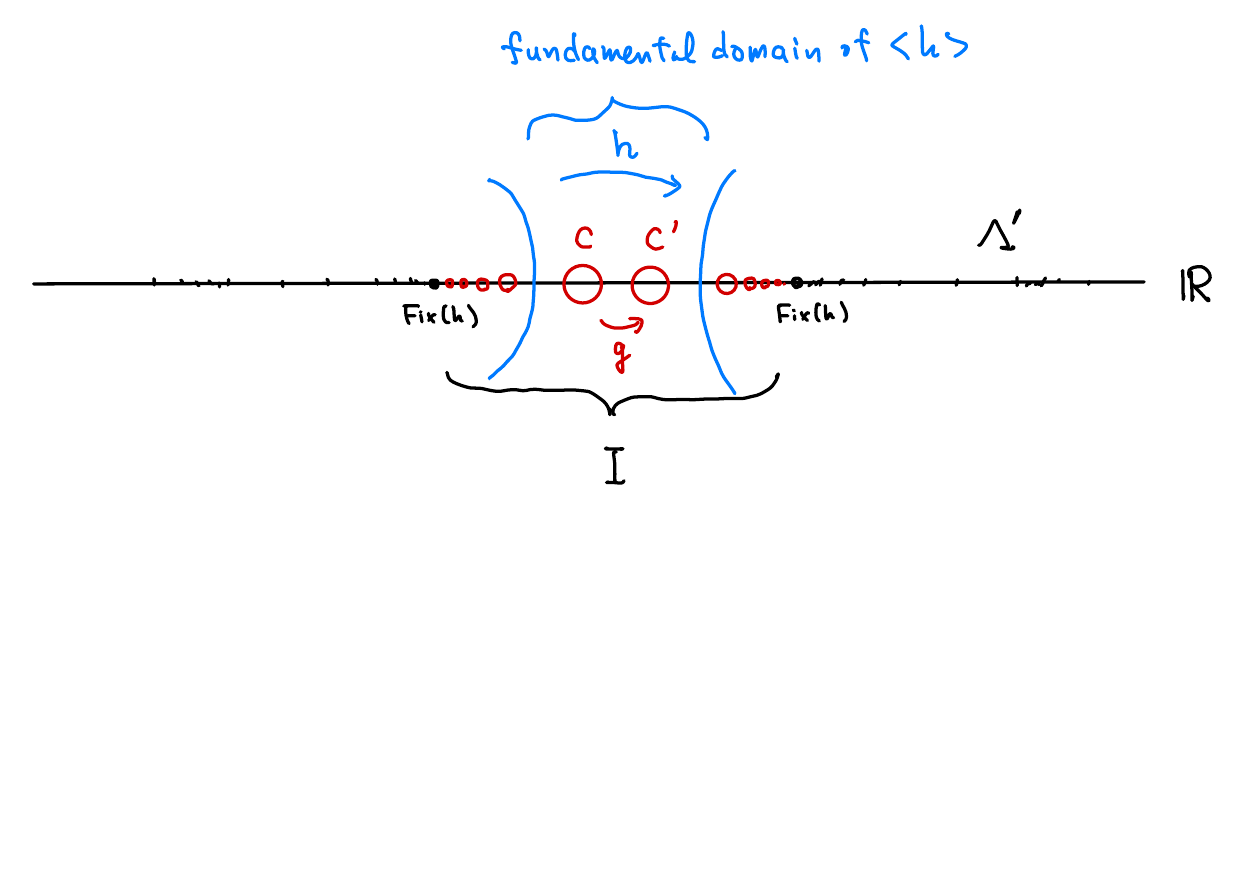,width=15.0cm} }}
\vspace{-5cm}
\end{center}
\caption{Admissible circle configuration realizing a prescribed end space.} \label{fig: EndSpace}

\end{figure}
%%%%%%%%%%%%%%%%%%

We repeat the same construction for each bounded open interval $I=I_j$ of $\mathbb R \setminus \Lambda'$ 
with $j$ in a countable index set $J$
and obtain
$\{g_{j,n}\}_{n \in \mathbb Z} \subset \Mob(\mathbb R)$ and 
the circles $\{C_{j,n},C'_{j,n}\}_{n \in \mathbb Z}$ for each $j$.
Then we see that the set of accumulation points of the family of circles 
$\mathcal C=\{C_{j,n},C'_{j,n}\}_{j \in J,\ n \in \mathbb Z}$ coincides with $\Lambda'$.

Let $G$ be a subgroup of $\Mob(\mathbb R)$ generated by all 
$\{g_{j,n}\}_{j \in J, n \in \mathbb Z}$. Then, $G$ is discrete by Proposition
\ref{prop: basics0}. Moreover,
the interior
$$
\left(\bigcap_{j \in J,\ n \in \mathbb Z} {\rm Ext}\,(C_{j,n}) \cap {\rm Ext}\,(C'_{j,n})\right)^{\circ}
$$
is a fundamental domain for $G$. Indeed, 
%we can verify this claim by using 
%a criterion in \cite[Prop.VII.A.2]{M} for the completeness of the quotient
%since 
we can choose the circles $C$ and $C'$ as in requirement (iii) above to meet 
the condition in Lemma \ref{lem: fund domain} (2).
Moreover, the limit set $\Lambda(G) \subset \mathbb R$ is totally
disconnected. Hence $G \subset \Mob(\mathbb R)$ is a classical Schottky (Fuchsian) group.
Since assumption \eqref{*} of Lemma \ref{end} is satisfied by 
the above construction of $\mathcal C$, the space of ends of the Riemann surface $\Omega(G)/G$ is homeomorphic to $\Lambda'$.
This completes the proof.
\end{proof}

%%%%%%Quasiconformal uniformization%%%%%%%%%
%%%%%%%%%%%%%%%%%%%%%%%%%%%%%

\section{Classical Schottky uniformization II: Quasiconformal uniformization} \label{sec: uniformization II}
We show that any Riemann surface of a certain moderate geometric condition
is quasiconformally equivalent to classical Schottky uniformization.

\begin{definition}
{\rm
We say that a Riemann surface $R$ has a {\em bounded pants decomposition} if $R$ is composed by
a union of pairs of pants whose boundary geodesics have uniformly bounded lengths.
}
\end{definition}

\begin{thm}\label{thm: qc uniformization}
Let $R$ be a Riemann surface of infinite genus that admits
a bounded pants decomposition. 
Then there exists a classical Schottky group $G$
such that $\Omega(G)/G$ is quasiconformally equivalent to $R$.
\end{thm}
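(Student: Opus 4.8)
The plan is to promote the topological uniformization of Theorem~\ref{thm: classical schottky uniformize} to a quasiconformal one by constructing a classical Schottky group whose quotient carries a pants decomposition matching that of $R$ \emph{both} combinatorially and metrically, and then assembling a globally quasiconformal homeomorphism one pair of pants at a time. First I would fix the bounded pants decomposition of $R$, with decomposing geodesics $\{\gamma_m\}$ of lengths $\ell_m\le L$, and record its Fenchel--Nielsen data: the trivalent adjacency graph $\mathcal G$ together with the lengths $\ell_m$ and the twist parameters $\tau_m$. Since $R$ has infinite genus and no planar ends, the graph $\mathcal G$ is exactly the combinatorial skeleton already realized topologically in Theorem~\ref{thm: classical schottky uniformize}; the new content is to realize it with controlled geometry.

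Next I would build the group. Following the circle construction in Theorem~\ref{thm: classical schottky uniformize}, I would lay out an admissible configuration $\mathcal C=\{C_m,C_m'\}$ with pairings $g_m$ whose incidence pattern reproduces $\mathcal G$, choosing the radii, centers, and translation lengths so that $\Omega(G)/G$ admits a pants decomposition with graph $\mathcal G$ and decomposing geodesics of length comparable to $\ell_m$. The upper bound $\ell_m\le L$, together with the collar lemma, can be made to yield a uniform lower bound on the hyperbolic distance between the totally geodesic planes spanning distinct circles; by Lemma~\ref{lem: fund domain}(2) this forces $\Ext(\mathcal C)^{\circ}$ to be a fundamental domain, while arranging the circles to shrink along every nested sequence keeps the limit set totally disconnected, so that $G$ is genuinely a classical Schottky group. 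Choosing paired circles to accumulate at common points arranges condition $(\ast)$ of Lemma~\ref{end}, so that the end space of $\Omega(G)/G$ matches that of $R$ and the two surfaces are homeomorphic.

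Finally I would produce the homeomorphism $R\to\Omega(G)/G$ by mapping each pair of pants of $R$ onto the corresponding pair of pants of the quotient. Because corresponding boundary lengths are comparable and bounded by $L$, a uniform quasiconformal comparison of pairs of pants with comparable boundary geodesics (a Y-piece comparison of Bishop type) supplies maps with dilatation bounded by a single constant $K_0=K_0(L)$; the twist discrepancies $\tau_m-\tau_m'$ are then absorbed inside the collars of the $\gamma_m$, which have definite modulus since $\ell_m\le L$, at a uniformly bounded extra cost. Patching these maps along the $\gamma_m$ gives a homeomorphism that is $K$-quasiconformal for a constant $K$ depending only on $L$, which is the asserted quasiconformal equivalence.

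I expect the construction of the group in the second step to be the main obstacle. The crux is that a \emph{short} decomposing geodesic $\gamma_m$ produces a collar of large conformal modulus, and no quasiconformal map can compress such an annulus; hence the circle configuration must reproduce each length $\ell_m$ up to a bounded factor, not merely the combinatorics of $\mathcal G$. Matching these moduli across infinitely many handles \emph{simultaneously} and uniformly, while keeping $\mathcal C$ admissible and verifying the fundamental-domain criterion of Lemma~\ref{lem: fund domain}, is precisely where the bounded pants decomposition hypothesis is indispensable, and it is the technical heart of the argument.
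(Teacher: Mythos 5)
There is a genuine gap, and you have in fact located it yourself: the entire weight of the theorem rests on the second step, where you must ``choose the radii, centers, and translation lengths so that $\Omega(G)/G$ admits a pants decomposition \ldots with decomposing geodesics of length comparable to $\ell_m$,'' and you offer no construction for this. Prescribing the lengths (and, as you would also need, the twists --- an unbounded twist discrepancy cannot be absorbed in a collar of definite modulus at bounded cost if the map is to respect the marking) of infinitely many curves simultaneously, with uniform constants, by direct manipulation of an admissible circle configuration is not a routine refinement of the topological construction in Theorem~\ref{thm: classical schottky uniformize}; it is an inverse problem with no evident solution, made worse by the fact that a bounded pants decomposition carries no lower bound on the $\ell_m$, so arbitrarily large collar moduli must also be reproduced exactly up to bounded factor. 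Declaring this step to be ``the technical heart of the argument'' and leaving it open means the theorem is not proved.

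The paper avoids this inverse problem entirely. It cuts $R$ along the geodesics $\{\gamma_i\}$ to obtain a \emph{planar} bordered surface $R'$ with uniformly bounded boundary lengths, and then invokes the circle domain theorem of He--Schramm in the form given by Maitani--Taniguchi to produce a circular domain $D$ that is \emph{conformally} equivalent to $R'$. All moduli are then automatically correct with no estimates needed; the only discrepancy is between the original boundary identification $\iota_i:C_i \to C_i'$ and the M\"obius pairing $g_i$, and the Claim (proved via Lemma~\ref{grotzsch} and Proposition~\ref{annulus}, i.e.\ Gr\"otzsch modulus bounds and the Koebe one-quarter theorem) shows $g_i^{-1}\circ\iota_i$ has uniformly bi-bounded derivative, so the correction can be interpolated inside the collars at uniformly bounded dilatation. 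Note also that the Schottky property of $G$ (totally disconnected limit set, $\Ext(\mathcal C)^\circ$ a fundamental domain) is obtained from Maitani--Taniguchi's Theorem~2 ($\Omega(G)\in O_{AD}$), not from the separation criterion of Lemma~\ref{lem: fund domain}(2) as you propose. If you want to salvage your approach, you would need either to prove the realization statement you assume, or to replace it by exactly this conformal-mapping step.
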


\begin{proof} 
Let $R=\bigcup P_n$ be a bounded pants decomposition.
We cut open $R$ by a family of simple closed geodesics $\{\gamma_i\}_{i \in \mathbb N} \subset \bigcup \partial P_n$
to obtain a planar domain $R'$ conformally equivalent to $R \setminus\{\gamma_i\}_{i \in \mathbb N}$.
The closure $\overline{R'}$ can be regarded as a bordered hyperbolic surface with geodesic boundary.
Since $R$ admits the bounded pants decomposition, if we choose an exhaustion of $R'$ by a sequence of
compact bordered hyperbolic surfaces $\{R'_n\}$ with geodesic boundary, then we can assume that
the lengths of boundary components of $R'_n$ are also uniformly bounded. Hence,
we see that $R'$ satisfies the assumption in Maitani and Taniguchi \cite[Th.1]{MaiTani}.
Then, as a special case of
the generalization of circular domain theorem of Koebe due to He and Schramm \cite{HS1,HS2}, 
there is a planar domain $D \subset \mathbb C$
conformally equivalent to $R'$ such that $\partial D$ consists of disjoint circles $\{C_i, C'_i\}_{i \in \mathbb N}$,
where $C_i$ and $C'_i$ correspond to $\gamma_i$.

For each $i \in \mathbb N$, the original identification of $C_i$ with $C'_i$ in $R$
defines a diffeomorphism $\iota_i:C_i \to C'_i$. On the other hand, 
we choose some
$g_i \in \Mob$ such that $g_i(\Int (C_i))=\Ext (C'_i)$ and that its axis  
is perpendicular to both the geodesic planes spanned by $C_i$ and $C'_i$
in the upper half-plane
${\mathbb H}^3$ with the hyperbolic metric. In general, $\iota_i:C_i \to C'_i$ and $g_i:C_i \to C'_i$ are different,
but we can claim the following.

\smallskip
\noindent
{\bf Claim.} For each $i \in \mathbb N$, 
the difference $g_i^{-1} \circ \iota_i:C_i \to C_i$ is a diffeomorphism whose derivative is
uniformly bounded and bounded away from zero independent of $i$.
\smallskip

For all $i \in \mathbb N$, we identify $C_i$ with $C'_i$ by $g_i$. This gives a Riemann surface $R_*$
possibly different from $R$ but quasiconformally equivalent to $R$.
We assume the above claim for the moment and verify this fact first. The proof of the claim is
given separately below. 

For each $\gamma_i$, the canonical collar in $R$, which can be taken by the collar lemma,
is conformally equivalent to an annulus $\{z \mid 1/r_i<|z|<r_i\}$. Note that $r_i>1$ is uniformly bounded away from 1
because the hyperbolic length of $\gamma_i$ is uniformly bounded. 
We denote the two sides of $\gamma_i$ in the canonical collar
by  $A_i \cong \{z \mid 1<|z|<r_i\}$ and $A_i' \cong \{z \mid 1/r_i<|z|<1\}$. 

We consider $g_i^{-1} \circ \iota_i:C_i \to C_i$ under the identification of $C_i$ with $\gamma_i=\{z \mid |z|=1\}$
and denote it by $h_i:\gamma_i \to \gamma_i$. 
We may assume that $h_i(1)=1$ by composing a rotation to $g_i$.
Define a map $H_i:A_i \to A_i$ by the linear interpolation between $h_i$ on $|z|=1$ and the identity on $|z|=r_i$.
More explicitly,
$$
H_i(te^{i\theta})=t\exp \left(i\int_0^\theta \left \{\frac{r_i-t}{r_i-1}(|h'_i(e^{i\rho})|-1)+1 \right \}\,d\rho \right)
\quad (1 \leq t \leq r_i,\ 0 \leq \theta < 2\pi).
$$
Since $r_i>1$ is uniformly bounded away from 1 and since $|h'_i(e^{i\rho})|$ is uniformly bounded and bounded away from zero,
we see that the maximal dilatation of $H_i$ is uniformly bounded.
Then a map of $R$ defined by $H_i$ on $A_i$ for all $i \in \mathbb N$ and by the identity elsewhere
is a quasiconformal homeomorphism onto $R_*$.

Let $G$ be the  subgroup of $\Mob$ generated by all such $g_i$.
By \cite[Th.2]{MaiTani}, the limit set
$\Lambda(G)$ is in the class $N_{D}$ (i.e. $\Omega(G) \in O_{AD}$, which will be discussed again later)
and in particular totally disconnected. Moreover, 
$D=\Ext(\mathcal C)^\circ$ is a fundamental domain for $G$.
Thus we see that $G$ is a classical Schottky group with admissible configuration  
${\mathcal C}=\{C_i, C'_i\}_{i \in \mathbb N}$.
\end{proof}

In the remainder of this section, we verify the claim used in the above proof.
Let $f:R' \to D$ be the conformal homeomorphism onto the circular domain $D$.
The restrictions of $f$ to the collar half-neighborhoods $A_i$ and $A'_i$ are specifically denoted by conformal embeddings
$f_i:A_i \to D$ and $f'_i:A'_i \to D$. By the reflection principle,
these maps extend to conformal homeomorphisms
$$
\tilde f_i:\gamma_i \cup A_i \cup A'_i \to N(C_i), \quad \tilde f'_i:\gamma_i \cup A_i \cup A'_i \to N(C'_i),
$$
where $N(C_i)$ is the union of $C_i$, $f_i(A_i)$ and its reflection with respect to $C_i$, and
$N(C'_i)$ is the union of $C'_i$, $f'_i(A'_i)$ and its reflection with respect to $C'_i$.
See Figure \ref{fig: EmdCollars}.
Then, $\tilde f'_i \circ \tilde f_i^{-1}:N(C_i) \to N(C'_i)$ is conformal, and
it coincides with the identification $\iota_i$ on $C_i$. We set $F_i=\tilde f'_i \circ \tilde f_i^{-1}$ and
consider the derivative $F'_i$ on $C_i$.

%%%%%%%%%FIGURE%%%%%%%%%%%
\begin{figure}[h]
%\ShowGrid
%\leavevmode 
\begin{center}
\AffixLabels{\centerline{\epsfig{file = 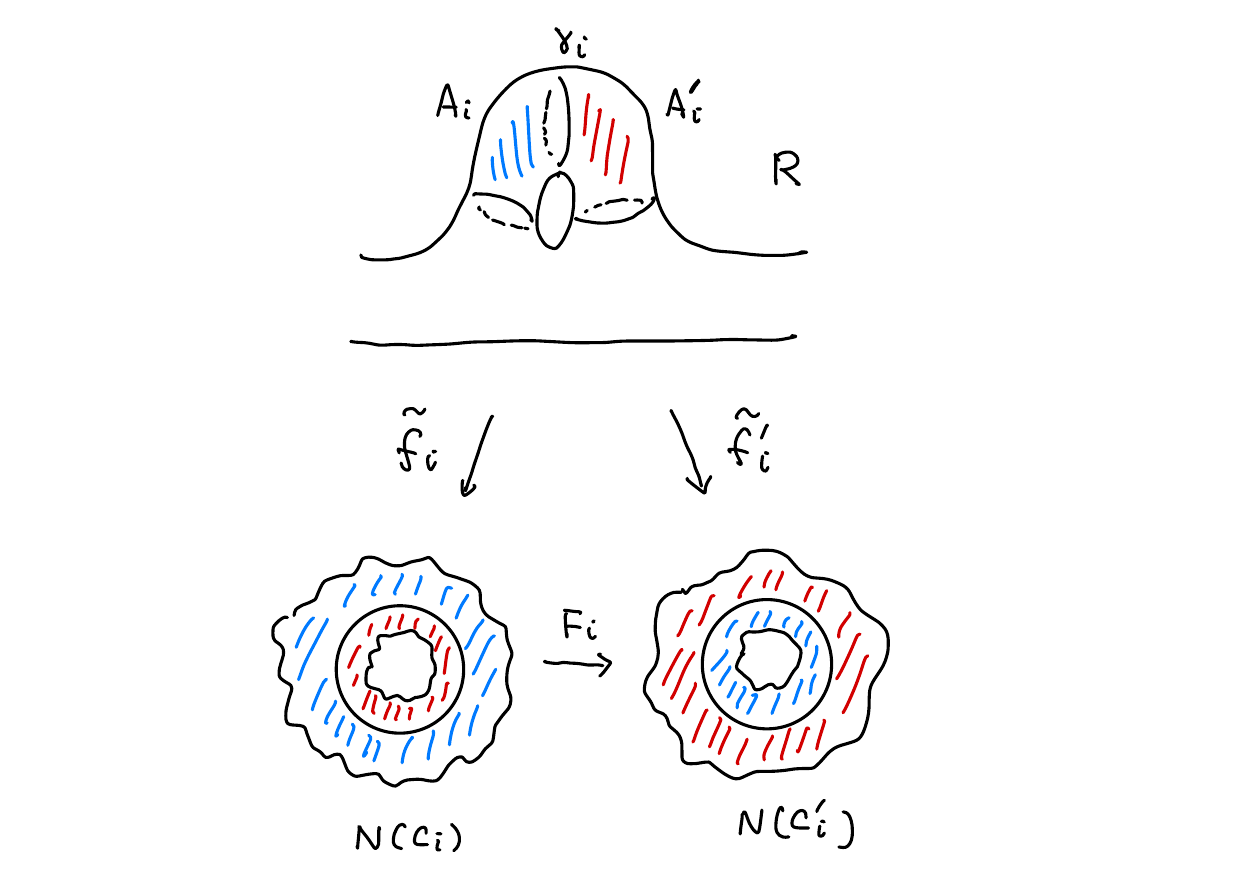,width=15.0cm} }}
\vspace{-1cm}
\end{center}
\caption{Embedding of collars into a circle domain.} \label{fig: EmdCollars}

\end{figure}
%%%%%%%%%%%%%%%%%%

We begin with showing that the images $f_i(A_i)$ and $f'_i(A'_i)$ in $D$ contain round annuli with boundary components
$C_i$ and $C'_i$ respectively whose conformal moduli are uniformly bounded away from 0. Hereafter, by an annulus, we mean
a round annulus $A=\{z \mid r_1<|z-a|<r_2\}$ and denote its conformal modulus $\log(r_2/r_1)$ by $\rm mod(A)$.
The concentric circle $\{z \mid |z-a|=r\}$ in $A$ with $r/r_1=r_2/r$ is called the core circle of $A$.

\begin{lem}\label{grotzsch}
Let $\Delta$ be the unit disc and $W \subset \Delta$ a doubly connected domain bounded by
$\partial \Delta$ and a simple closed Jordan curve separating $0$ from $\partial \Delta$. 
Then $W$ contains an annulus $A$ 
one of whose boundary component is $\partial \Delta$ such that
$$
{\rm mod}(A) \geq -\log \mu^{-1}({\rm mod}(W)),
$$
where $\mu:(0,1) \to (0,\infty)$ is a decreasing function giving the conformal modulus of
the Gr\"otzsch extremal domain $\Delta \setminus [0,r]$ by $\mu(r)$.
\end{lem}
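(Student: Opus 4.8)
The plan is to construct $A$ explicitly as the largest concentric round annulus touching $\partial\Delta$ that avoids the bounded complementary component of $W$, and then to bound its modulus from below by comparison with the Grötzsch extremal domain. The whole argument hinges on locating the single geometric quantity that governs $\mathrm{mod}(W)$, namely how far the inner complementary continuum of $W$ reaches toward $\partial\Delta$.

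First I would describe the geometry of $W$. Since $W$ is doubly connected with $\partial\Delta$ as one boundary component, its complement in $\widehat{\mathbb C}$ has exactly two components; write $K$ for the bounded one, so that $K$ is a continuum contained in $\Delta$ and $W=\Delta\setminus K$. By hypothesis the inner boundary curve of $W$ separates $0$ from $\partial\Delta$, whence $0\in K$. Set $\rho_0=\max_{z\in K}|z|\in(0,1)$, and let $p\in K$ realize this maximum, so $|p|=\rho_0$. Define $A=\{z:\rho_0<|z|<1\}$. Because $K\subset\{|z|\le\rho_0\}$, the open annulus $A$ is disjoint from $K$ and hence $A\subset W$; one of its boundary components is exactly $\partial\Delta$, and in the normalization ${\rm mod}(\{r_1<|z|<r_2\})=\log(r_2/r_1)$ we have ${\rm mod}(A)=\log(1/\rho_0)=-\log\rho_0$.

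Next I would invoke the classical Grötzsch module theorem. The continuum $K$ is connected and contains both $0$ and the point $p$ with $|p|=\rho_0$, so $K$ joins $0$ to the circle $\{|z|=\rho_0\}$. The extremal property of the Grötzsch ring asserts that among all continua in $\overline\Delta$ joining $0$ to a point of modulus $\rho_0$, the radial segment $[0,\rho_0]$ maximizes the modulus of the complementary ring domain (see, e.g., Ahlfors, \emph{Conformal Invariants}, or Lehto--Virtanen). Consequently ${\rm mod}(W)={\rm mod}(\Delta\setminus K)\le\mu(\rho_0)$. Since $\mu$ is a decreasing bijection of $(0,1)$ onto $(0,\infty)$, its inverse $\mu^{-1}$ is decreasing as well, so the inequality ${\rm mod}(W)\le\mu(\rho_0)$ gives $\mu^{-1}({\rm mod}(W))\ge\rho_0$. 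Applying the decreasing map $-\log$ then yields $-\log\rho_0\ge-\log\mu^{-1}({\rm mod}(W))$, that is, ${\rm mod}(A)\ge-\log\mu^{-1}({\rm mod}(W))$, which is the desired estimate.

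The main obstacle is the precise invocation of the Grötzsch inequality in exactly the right guise: one must check that the quantity controlling ${\rm mod}(W)$ is the \emph{outer} radius $\rho_0$ of the inner complementary continuum $K$ (the distance $K$ reaches outward toward $\partial\Delta$), that the radial segment is the maximizing configuration so the inequality points the correct way, and that the normalization of $\mu$ coincides with the round-annulus normalization used for ${\rm mod}(A)$. Once the monotonicity of $\mu$, hence of $\mu^{-1}$, is pinned down, the remaining chain of inequalities is routine, so essentially all the content of the lemma is carried by citing the Grötzsch extremal theorem correctly and by identifying $\rho_0=\max_{z\in K}|z|$ as the relevant extremal parameter.
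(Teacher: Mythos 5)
Your proof is correct and follows essentially the same route as the paper: both arguments rest on the Gr\"otzsch module theorem applied to the inner complementary continuum $K$ (the paper phrases it contrapositively, concluding that every $z$ with $|z|>\mu^{-1}({\rm mod}(W))$ lies in $W$, while you take $A=\{\rho_0<|z|<1\}$ with $\rho_0=\max_{z\in K}|z|$ and then unwind the monotonicity of $\mu$). The only difference is cosmetic; your annulus is the maximal one and a priori contains the paper's.
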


\begin{proof}
If $W$ does not contain a point $z \in \Delta$ with $|z|=r$, then ${\rm mod}(W) \leq \mu(r)$
by the Gr\"otzsch module theorem (see \cite[p.54]{LV}). This implies that every $z \in \Delta$ 
with $|z| >\mu^{-1}({\rm mod}(W))$ is contained in $W$. By taking the annulus $A$ consisting of all such $z$,
we obtain the statement.
\end{proof}

We also use the following general principle concerning the derivative and the moduli of a conformal map between annuli. 

\begin{prop}\label{annulus}
Let $A_1 \subset \mathbb C$ be an annulus whose core circle $C_1$ has radius $r_1>0$ and
$A_2 \subset \mathbb C$ an annulus whose core circle $C_2$ has radius $r_2>0$.
Suppose that there are positive constants $m, M>0$ such that 
$m \leq {\rm mod}(A_1)$ and ${\rm mod}(A_2) \leq M$. Then
any conformal map $f$ of $A_1$ into $A_2$  
with $f(C_1)=C_2$
satisfies 
$$
|f'(z)| \leq \frac{4M}{m} \frac{r_2}{r_1}
$$
for every $z \in C_1$.
\end{prop}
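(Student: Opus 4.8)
The plan is to derive the bound from the Koebe $\tfrac14$-theorem applied to the restriction of $f$ to the largest round disc inscribed in $A_1$ at the chosen point, after first translating both annuli so that they are centered at the origin. Write $A_1=\{\rho_1<|w|<\rho_2\}$ and $A_2=\{\sigma_1<|z|<\sigma_2\}$, so that $r_1=\sqrt{\rho_1\rho_2}$, $r_2=\sqrt{\sigma_1\sigma_2}$, and ${\rm mod}(A_1)=\log(\rho_2/\rho_1)$, ${\rm mod}(A_2)=\log(\sigma_2/\sigma_1)$. Fix $w_0\in C_1$ and set $d=\mathrm{dist}(w_0,\partial A_1)$. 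Since the core radius is the geometric mean of the boundary radii, the inner boundary is the nearer one, and a short computation gives $d=r_1\bigl(1-e^{-{\rm mod}(A_1)/2}\bigr)$; in particular the open disc $B=B(w_0,d)$ lies in $A_1$.

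Because $f$ is injective on $A_1\supset B$, the Koebe $\tfrac14$-theorem yields $f(B)\supset B\!\left(f(w_0),\tfrac14|f'(w_0)|\,d\right)$. As $f(B)\subset A_2$ and $f(w_0)\in C_2$ is an interior point of $A_2$, this Koebe disc is contained in $A_2$, so its radius is at most $d_2:=\mathrm{dist}(f(w_0),\partial A_2)=r_2\bigl(1-e^{-{\rm mod}(A_2)/2}\bigr)$, again because the inner boundary is nearer. Hence
$$
|f'(w_0)|\le \frac{4d_2}{d}=\frac{4r_2}{r_1}\cdot\frac{1-e^{-{\rm mod}(A_2)/2}}{1-e^{-{\rm mod}(A_1)/2}}.
$$

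It then remains to absorb the moduli into the constant $\tfrac{4M}{m}$. Using ${\rm mod}(A_2)\le M$ in the numerator and ${\rm mod}(A_1)\ge m$ in the denominator reduces matters to the elementary inequality $\tfrac{1-e^{-M/2}}{1-e^{-m/2}}\le \tfrac{M}{m}$ for $0<m\le M$, which follows from the monotonicity of the function $x\mapsto(1-e^{-x/2})/x$ on $(0,\infty)$. I expect the main subtlety to be justifying the hypothesis $m\le M$ used in this last step: this is not assumed a priori, but it holds because the conformal image $f(A_1)$ is an essential doubly connected subdomain of $A_2$ containing the core $C_2$, so by conformal invariance and the monotonicity of the modulus one has ${\rm mod}(A_1)={\rm mod}(f(A_1))\le{\rm mod}(A_2)$, whence $m\le{\rm mod}(A_1)\le{\rm mod}(A_2)\le M$. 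Once the containment of the Koebe disc in $A_2$ and the two distance-to-boundary formulas are in place, the estimate assembles immediately into $|f'(w_0)|\le\tfrac{4M}{m}\tfrac{r_2}{r_1}$ for every $w_0\in C_1$.
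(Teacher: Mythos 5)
Your proof is correct, and it uses the same key tool as the paper (the Koebe one-quarter theorem applied to the largest disc about a core point, compared against the distance to the boundary of the target), but in a different model. The paper lifts $f$ to the universal covers $\zeta\mapsto r_1e^{i\zeta}$, $\zeta\mapsto r_2e^{i\zeta}$, so that both annuli become horizontal strips $\{|\mathrm{Im}\,\zeta|<\mathrm{mod}(A_k)/2\}$ with the core circles lifting to $\mathbb R$; there the distance from a core point to the boundary is exactly half the modulus, and Koebe gives $\tfrac14\,\mathrm{mod}(A_1)|\widetilde f'(\zeta)|\le \mathrm{mod}(A_2)$ at once, whence $|f'(z)|=(r_2/r_1)|\widetilde f'(\zeta)|\le (4M/m)(r_2/r_1)$ with no further work. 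Working directly in the round annuli, you instead get the ratio $\bigl(1-e^{-\mathrm{mod}(A_2)/2}\bigr)/\bigl(1-e^{-\mathrm{mod}(A_1)/2}\bigr)$, which forces you to add two steps the paper's version avoids: the elementary inequality $\frac{1-e^{-M/2}}{1-e^{-m/2}}\le\frac{M}{m}$ (valid only for $m\le M$, via the monotonicity of $x\mapsto(1-e^{-x/2})/x$), and the justification that $m\le M$ may be assumed because $f(A_1)$ is an essential subannulus of $A_2$ containing $C_2$, so $\mathrm{mod}(A_1)\le\mathrm{mod}(A_2)$ by conformal invariance and modulus monotonicity. You correctly identified and discharged both; your distance computations $d=r_1\bigl(1-e^{-\mathrm{mod}(A_1)/2}\bigr)$ and $d_2=r_2\bigl(1-e^{-\mathrm{mod}(A_2)/2}\bigr)$ (the inner boundary being the nearer one since the core radius is the geometric mean) are right. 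The net effect is that the strip picture buys a shorter argument with no case analysis, while your direct argument is self-contained in the plane and actually yields the slightly sharper constant $4\bigl(1-e^{-\mathrm{mod}(A_2)/2}\bigr)/\bigl(1-e^{-\mathrm{mod}(A_1)/2}\bigr)\le 4M/m$.
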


\begin{proof}
Consider the universal covers $p_1:\widetilde A_1 \to A_1$ and $p_2:\widetilde A_2 \to A_2$
defined by $\zeta \mapsto r_1 e^{i\zeta}$ and $\zeta \mapsto r_2 e^{i\zeta}$ respectively,
where $\widetilde A_1=\{\zeta \mid |{\rm Im} \zeta| <{\rm mod}(A_1)/2\}$ and
$\widetilde A_2=\{\zeta \mid |{\rm Im} \zeta| <{\rm mod}(A_2)/2\}$. 
Then $f$ lifts to a conformal homeomorphism
$\widetilde f:\widetilde A_1 \to \widetilde A_2$ such that $\widetilde f(\mathbb R)=\mathbb R$. 

For a point $z \in C_1$, choose its lift $\zeta \in \mathbb R$. Since $|f'(z)|=(r_2/r_1)|\widetilde f'(\zeta)|$,
we estimate $|\widetilde f'(\zeta)|$. By the Koebe one-quarter theorem (see \cite[p.56]{LV}) applied to the disc of center $\zeta$ and
radius ${\rm mod}(A_1)/2$, we see that
$$
\frac{1}{4}\, {\rm mod}(A_1) |\widetilde f'(\zeta)| \leq {\rm mod}(A_2).
$$
This implies that $|f'(z)| \leq (4M/m)(r_2/r_1)$.
\end{proof}

\noindent
{\it Proof of Claim.}
For each $i \in \mathbb N$, we consider the conformal map $F_i:N(C_i) \to N(C'_i)$. 
Let $r_i, r'_i>0$ be the radii of $C_i, C'_i$ respectively. 
We can choose an annular neighborhood $A(C'_i) \subset N(C'_i)$ with the core circle $C'_i$ such that
${\rm mod}(A(C'_i))=M$
for some constant $M>0$ independent of $i$. This is due to Lemma \ref{grotzsch}.
Moreover, we can choose an annular neighborhood $A(C_i) \subset F_i^{-1}(A(C'_i))$ with the core circle $C_i$ such that
${\rm mod}(A(C_i)) \geq m$ for some constant $m>0$ independent of $i$, which is also by Lemma \ref{grotzsch}.
Then, we apply Proposition \ref{annulus} to the conformal map $F_i$ on $A(C_i)$ into $A(C'_i)$ to obtain that
$$
|F'_i(z)| \leq \frac{4M}{m} \frac{r'_i}{r_i}
$$
for every $z \in C_i$. Exchanging the roles of $F_i$ and $F_i^{-1}$, we also have
$$
|(F_i^{-1})'(z)| \leq \frac{4M}{m} \frac{r_i}{r'_i}.
$$
for every $z \in C'_i$. 
Since $\iota_i=F_i|_{C_i}$, it follows from the above two inequalities that the derivative $|\iota'_i|$ is 
comparable to $r'_i/r_i$ uniformly, that is, there is some constant $K_1 \geq 1$ such that
$$
\frac{1}{K_1} \frac{r'_i}{r_i} \leq |\iota'_i(z)| \leq K_1 \frac{r'_i}{r_i} \qquad (\forall z \in C_i)
$$
for all $i \in \mathbb N$.

Next, we consider the derivative $|g'_i|$ for the M\"obius transformation $g_i$ on $C_i$.
Since the axis of $g_i$ is orthogonal to the hyperbolic planes in $\mathbb H^3$ spanned by $C_i$ and $C'_i$,
if $C_i$ and $C'_i$ are apart from each other uniformly, then the end points of the axis come closer to
the centers of $C_i$ and $C'_i$. In this case, $|g'_i|$ is 
comparable to the ratio $r'_i/r_i$ of their radii uniformly. By Lemma \ref{grotzsch}, any 
$C_i$ and $C'_i$ have mutually disjoint outer annuli whose conformal moduli are bounded away from 0
independent of $i \in \mathbb N$. This fact serves as a criterion for $C_i$ and $C'_i$ to be uniformly apart,
and hence we can find some constant $K_2 \geq 1$ such that
$$
\frac{1}{K_2} \frac{r'_i}{r_i} \leq |g'_i(z)| \leq K_2 \frac{r'_i}{r_i} \qquad (\forall z \in C_i)
$$
for all $i \in \mathbb N$.

From the above two inequalities, the derivative of the diffeomorphism $g_i^{-1} \circ \iota_i:C_i \to C_i$ 
is bounded from above by $K_1 K_2$ and from below by $1/(K_1 K_2)$ independent of $i \in \mathbb N$.
This completes the proof of the claim.
\qed
\medskip

\begin{rem}
{\rm A hyperbolic Riemann surface $R$ or its Fuchsian model $\Gamma$ on the unit disc 
$\Delta$ is called {\it conservative} if
the horocyclic limit set of $\Gamma$ has full measure on $\partial \Delta$. It is known that
if $R$ admits a uniform pants decomposition, or more generally, if the injectivity  radius  on $R$ is  uniformly bounded,
then it is conservative (see \cite[Th.5.11]{M-T}). In contrast, 
an example constructed in the next section is not conservative.}
\end{rem}

Regarding the uniqueness of the classical Schottky uniformization,
we have the following

\begin{thm}\label{uniqueness}
Suppose $G$  is a classical Schottky group with respect to the admissible configuration of circles $\mathcal{C}$ where   
 $\Omega(G)/G$ has  a bounded pants decomposition. Let 
 $G^{\prime}$ be a Schottky-like group with admissible configuration of 
 curves $\mathcal{C}^{\prime}$. If 
 $h:\Omega(G)/G \to \Omega(G')/G'$ is a conformal mapping which sends the  free homotopy classes of simple closed curves determined by 
$\mathcal C$  to those of $\mathcal{C}^{\prime}$,  then $G$ and $G'$ are conjugate by a M\"obius transformation.
%{\color{red} [I slightly changed the beginning of this statement since I didn't see a need to refer to a Riemann surface $R$. I hope that is okay with you]}
\end{thm}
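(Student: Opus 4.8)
The plan is to lift the conformal map $h$ through the covering projections $p\colon \Omega(G)\to\Omega(G)/G$ and $p'\colon \Omega(G')\to\Omega(G')/G'$ and then to recognize the lift as the restriction of a M\"obius transformation. Both $G$ and $G'$ are purely loxodromic by Proposition \ref{prop: basics0}, hence torsion free, so $p$ and $p'$ are holomorphic coverings with deck groups $G$ and $G'$. The image under $p$ of a configuration circle $C_i$ (equivalently of $C_i'$, since $g_i(C_i)=C_i'$) is a simple closed curve $\gamma_i\subset\Omega(G)/G$; because the $G$-stabilizer of $C_i$ is trivial, $\gamma_i$ lifts to the \emph{closed} curve $C_i$, so $[\gamma_i]$ lies in $H:=p_*\pi_1(\Omega(G))=\ker\bigl(\pi_1(\Omega(G)/G)\to G\bigr)$, and $H$ is the normal closure of $\{[\gamma_i]\}$ (cutting along the $\gamma_i$ unfolds $\Omega(G)/G$ to the planar domain $\Ext(\mathcal C)^\circ$). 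The same description holds for $G'$, the curves $\gamma_j'$, and $H'$.

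First I would use the marking hypothesis. Since $h$ carries $\{[\gamma_i]\}$ bijectively onto $\{[\gamma_j']\}$ as free homotopy classes, the induced isomorphism $h_*$ sends $H$ onto $H'$; the lifting criterion then produces a homeomorphism $\tilde h\colon\Omega(G)\to\Omega(G')$ with $p'\circ\tilde h=h\circ p$ that is equivariant for an isomorphism $\phi\colon G\to G'$, i.e. $\tilde h\circ g=\phi(g)\circ\tilde h$ for every $g\in G$. Normalizing the lift so that $\tilde h$ maps the fundamental tile $\Ext(\mathcal C)^\circ$ onto $\Ext(\mathcal C')^\circ$, equivariance forces $\tilde h$ to carry $G$-translates of $\mathcal C$-circles to $G'$-translates of $\mathcal C'$-circles. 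Because $p,p'$ are holomorphic and $h$ is conformal, $\tilde h$ is a conformal homeomorphism of $\Omega(G)=\widehat{\mathbb C}\setminus\Lambda(G)$ onto $\Omega(G')=\widehat{\mathbb C}\setminus\Lambda(G')$.

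The crux is to extend $\tilde h$ across the limit sets. Since $\Omega(G)/G$ has a bounded pants decomposition, the argument in the proof of Theorem \ref{thm: qc uniformization} (through \cite[Th.2]{MaiTani}) gives $\Omega(G)\in O_{AD}$, equivalently $\Lambda(G)\in N_D$; as $O_{AD}$ is a conformal invariant and $\tilde h$ is conformal, $\Omega(G')\in O_{AD}$ as well, so $\Lambda(G')\in N_D$, and both limit sets are totally disconnected of zero area. Now I would control the cluster set of $\tilde h$ at a point $\zeta\in\Lambda(G)$: because $G$ is Schottky, Lemma \ref{lem: fund domain} shows that the nested translated circles around $\zeta$ shrink to $\zeta$, so $\Int(A_N)$ is a shrinking neighborhood basis of $\zeta$; their $\tilde h$-images $\Int(A_N')$ are the interiors of nested translated $\mathcal C'$-circles, and after conjugating $G'$ so that $\infty\in\Ext(\mathcal C')$ these interiors are bounded. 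Hence the cluster set of $\tilde h$ at each $\zeta$ is bounded, so $\tilde h$ is bounded on a neighborhood of $\Lambda(G)$; being conformal there, its local Dirichlet integral equals the finite area of its bounded image. Since $\Lambda(G)\in N_D$ is removable for analytic functions of finite Dirichlet integral, $\tilde h$ continues analytically across $\Lambda(G)$, and the same applied to $\tilde h^{-1}$ (using $\Lambda(G')\in N_D$) shows the extension is a homeomorphism of $\widehat{\mathbb C}$ mapping $\Lambda(G)$ onto $\Lambda(G')$. A conformal automorphism of $\widehat{\mathbb C}$ is M\"obius, so $\tilde h$ extends to a M\"obius transformation $\gamma$.

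Finally, passing the equivariance $\tilde h\circ g=\phi(g)\circ\tilde h$ to the extension gives $\gamma g\gamma^{-1}=\phi(g)\in G'$ for every $g\in G$, hence $\gamma G\gamma^{-1}=G'$, which is the assertion. I expect the extension step to be the main obstacle: one must confirm that the bounded pants decomposition genuinely forces $\Lambda(G)\in N_D$ (already the content of \cite[Th.2]{MaiTani} used in Theorem \ref{thm: qc uniformization}) and, more delicately, that this null property is exactly the removability needed once the cluster sets of $\tilde h$ over the totally disconnected limit set have been bounded. By contrast, the lifting and the concluding conjugation are routine once the marking is used to match the generators.
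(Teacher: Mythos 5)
Your proposal is correct and follows essentially the same route as the paper: lift $h$ to a conformal map $\tilde h:\Omega(G)\to\Omega(G')$, invoke the Maitani--Taniguchi result that the bounded pants decomposition forces $\Omega(G)\in O_{AD}$, and conclude that $\tilde h$ extends to a M\"obius transformation conjugating $G$ onto $G'$. The only divergence is that where the paper cites the known equivalence of $O_{AD}$ with the property that every conformal map of the domain into $\widehat{\mathbb C}$ is the restriction of a M\"obius transformation, you re-derive it by hand (bounded cluster sets, Dirichlet integral equals image area, $N_D$-removability); this is sound apart from the small imprecision that not every point of $\Lambda(G)$ is surrounded by a nested sequence of translated circles --- points in $G$-translates of $\Lambda'$ need not be --- although boundedness of the cluster set still follows immediately, since it is contained in the complement of the image of $\tilde h$, a compact subset of $\mathbb C$ once one normalizes $\infty\in\Omega(G')$.
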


\begin{proof}
In the proof of Theorem \ref{thm: qc uniformization}, we used a result by
Maitani and Taniguchi \cite{M-T} claiming that $\Omega(G)$ 
belongs to the class $O_{AD}$, that is, holomorphic functions of finite
Dirichlet energy on $\Omega(G)$ are only constant functions. It is known that
this condition is equivalent to saying that any conformal map of $\Omega(G)$ 
into $\widehat{\mathbb C}$ 
extends to a M\"obius transformation. Since the conformal map $h$ lifts to $\Omega(G)$
and then extends to a M\"obius transformation, we see that this gives the conjugation between
$G$ and $G'$.
\end{proof}

The key fact in the above proof is the conformal removability of the limit set of
the classical Schottky group $G$,
and in a slightly different setting for the reflection group with respect to the circles
this is also a crucial property for
the circle domain theorem due to He and Schramm. They also conjectured conformal rigidity of
certain circle domains. Solutions of this conjecture for several important cases are given by
Yousi \cite{Y} and the joint paper \cite{N-Y} with Ntalampekos.

Let $M$ be an infinite genus topological handlebody. 
We consider the  problem of  whether the hyperbolic structure on $M$ is determined by the conformal structure on $\partial M$. The above observation yields 
the following assertion in the  special case of the conformal structure on $\partial M$ having a bounded pants decomposition. 

%\begin{cor}
%Let $M$ be an infinite genus handlebody with the complete
%meridian system $\mathcal A$ uniformized topologically by
%a classical Schottky group $G$ with the admissible configuration 
%corresponding to $\mathcal A$. If $\Omega(G)/G$ admits a bounded pants decomposition,
%then this conformal structure on $\partial M=\Omega(G)/G$
%determines the unique hyperbolic structure on $M$ relative to $\mathcal A$.
%\end{cor}

\begin{cor}\label{cor: uniqueness of handlebody}
Let $M$ be an infinite genus handlebody with  complete
meridian system $\mathcal A$ uniformized topologically by
a classical Schottky group $G$ with  admissible configuration $\mathcal C$ corresponding to $\mathcal A$. If $\Omega(G)/G$ admits a bounded pants decomposition,
then any conformal structure on $\partial M$ quasiconformally equivalent to $\Omega(G)/G$
relative to $\mathcal C$ determines the unique hyperbolic structure on $M$.
\end{cor}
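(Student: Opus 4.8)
The plan is to derive Corollary~\ref{cor: uniqueness of handlebody} as a direct consequence of Theorem~\ref{uniqueness}, the uniqueness theorem for classical Schottky uniformization, by translating the $3$-dimensional hyperbolic structure problem on $M$ into the $2$-dimensional conformal structure problem on $\partial M$. First I would set up the dictionary: since $M$ is topologically uniformized by the classical Schottky group $G$ with admissible configuration $\mathcal{C}$ corresponding to $\mathcal{A}$, we have $M \cong (\mathbb{H}^3 \cup \Omega(G))/G$ with $\partial M \cong \Omega(G)/G$, where the meridian system $\mathcal{A}$ corresponds precisely to the free homotopy classes of the simple closed curves determined by $\mathcal{C}$. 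The key structural fact here, established in Proposition~\ref{prop: Schottky group iff handlebody}, is that a hyperbolic structure on $M$ whose boundary conformal structure is $\partial M$ is exactly a Fuchsian--Kleinian uniformization by a classical Schottky group, so specifying the hyperbolic structure on $M$ is the same as specifying the Schottky group up to M\"obius conjugacy.

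Next I would take an arbitrary conformal structure $\sigma$ on $\partial M$ that is quasiconformally equivalent to $\Omega(G)/G$ relative to $\mathcal{C}$; the relative condition means the quasiconformal homeomorphism respects the curve system $\mathcal{C}$ (equivalently $\mathcal{A}$), so it sends the free homotopy classes determined by $\mathcal{C}$ to those of the corresponding configuration for $\sigma$. Any hyperbolic structure on $M$ realizing $\sigma$ on $\partial M$ is given by some classical Schottky group $G'$ (a priori only Schottky-like, but arising as a genuine handlebody uniformization) with admissible configuration $\mathcal{C}'$ such that $\Omega(G')/G' \cong (\partial M, \sigma)$ and $\mathcal{C}'$ corresponds to $\mathcal{A}$. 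The composition of the marking maps then yields a conformal homeomorphism $h:\Omega(G)/G \to \Omega(G')/G'$ that sends the free homotopy classes of simple closed curves determined by $\mathcal{C}$ to those determined by $\mathcal{C}'$, which is precisely the hypothesis of Theorem~\ref{uniqueness}.

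I would then invoke Theorem~\ref{uniqueness} directly: because $\Omega(G)/G$ has a bounded pants decomposition by hypothesis, the theorem forces $G$ and $G'$ to be conjugate by a M\"obius transformation. Translating back through the dictionary, the M\"obius conjugacy extends to an isometry of $\mathbb{H}^3$ carrying $(\mathbb{H}^3 \cup \Omega(G))/G$ to $(\mathbb{H}^3 \cup \Omega(G'))/G'$, so the two hyperbolic structures on $M$ coincide. Hence $\sigma$ determines a unique hyperbolic structure on $M$.

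The main obstacle, and the step requiring the most care, is the translation between boundary conformal data and bulk hyperbolic data, specifically verifying that the hypothesized conformal structure on $\partial M$ indeed lifts to a conformal map $h$ satisfying the exact marking condition of Theorem~\ref{uniqueness}. One must be careful that "quasiconformally equivalent relative to $\mathcal{C}$" supplies not merely a quasiconformal map but one compatible with the curve system in the sense that the resulting $h$ preserves the relevant free homotopy classes; and one must confirm that any hyperbolic structure on $M$ with the prescribed boundary conformal structure does arise from a Schottky-like group with configuration $\mathcal{C}'$ in the correct homotopy class, so that Theorem~\ref{uniqueness} applies verbatim. Once this matching of markings is in place, the conformal removability of the limit set underlying Theorem~\ref{uniqueness} does the essential work, and the rigidity follows.
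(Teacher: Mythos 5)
Your reduction to Theorem~\ref{uniqueness} founders at precisely the point you flag as delicate: the map $h:\Omega(G)/G \to \Omega(G')/G'$ obtained by composing the marking maps is not conformal. The hypothesis of the corollary supplies only a \emph{quasiconformal} homeomorphism from $\Omega(G)/G$ to the new conformal structure $\sigma$ on $\partial M$, while the uniformization of $(\partial M,\sigma)$ by $G'$ is conformal; the composite is therefore merely quasiconformal, and Theorem~\ref{uniqueness} cannot be applied to the pair $(G,G')$ as you invoke it. The engine of that theorem is the fact that $\Omega(G)$ lies in the class $O_{AD}$, so that conformal maps of $\Omega(G)$ into $\widehat{\mathbb C}$ extend to M\"obius transformations; this removability statement says nothing about quasiconformal maps, so no amount of care with markings will make your $h$ admissible.

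The missing step --- and it is exactly what the paper's short proof supplies --- is to first realize $\sigma$ by a quasiconformal deformation $G_1=wGw^{-1}$ of $G$, where $w$ is a quasiconformal homeomorphism of $\widehat{\mathbb C}$ compatible with $G$ (obtained by solving the Beltrami equation for the lifted, $G$-invariant Beltrami coefficient). Any two hyperbolic structures on $M$ inducing $\sigma$ are then compared through a genuinely conformal map out of $\Omega(G_1)/G_1$. To run the argument of Theorem~\ref{uniqueness} starting from $G_1$ one must know that $\Omega(G_1)$ is still of class $O_{AD}$; this is not covered by the statement of Theorem~\ref{uniqueness}, whose hypothesis is placed on $\Omega(G)/G$, but it follows from the quasiconformal invariance of $O_{AD}$ for planar domains (\cite[II.15.B]{SN}). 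With that invariance in hand the conclusion is as you describe: the resulting M\"obius conjugacy extends to an isometry of $\mathbb H^3$ and the hyperbolic structure on $M$ is unique. (One could instead argue that a bounded pants decomposition is itself preserved under quasiconformal maps via the bounded distortion of geodesic lengths, but some such quasiconformal-invariance statement is indispensable and is absent from your proposal.)
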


\begin{proof}
We consider a quasiconformal deformation $G_1$ of $G$ by a quasiconformal homeo\-morphism of $\widehat {\mathbb C}$
compatible with $G$. Because the class $O_{AD}$ is preserved for planar domains under quasiconformal mappings
(see \cite[II.15.B]{SN}), $\Omega(G_1)$ also belongs to $O_{AD}$. Then, the statement follows from Theorem \ref{uniqueness}.
\end{proof}

%%%%%%%%%%%%%%%%%%%%%%%%%%%%%%%
%%Examples of no classical Schottky uniformization%%%%%

\section{No Schottky uniformization: examples}
\label{sec: no Schottky uniformization: examples}

In this section we give necessary conditions for a Riemann surface of infinite genus with only  
non-planar ends to have a classical Schottky uniformization. We show also that for every Riemann surface of infinite genus,
there exists a topologically equivalent Riemann surface that does not have a Schottky uniformization.
 
\subsection{No quasiconformal classical Schottky uniformization}

Here is an example of a Riemann surface of infinite genus with only non-planar ends which is not 
quasiconformally equivalent to a classical Schottky uniformized surface.  

One simple observation is that if we choose the admissible circle configuration
for a classical Schottky group $G$ of infinite genus appropriately,
then the accumulation set $\Lambda'$ of the circles is homeomorphic to the space of ends of the Riemann surface (Lemma \ref{end}). 
Moreover, $\Lambda'$ is contained in the limit set of $G$ (Lemma \ref{lem: basics2}).
Since the limit set of $G$ is totally disconnected we have that $\Lambda'$ is totally disconnected. 

A rough construction of a Riemann surface $R$ of no such quasiconformal uniformization is as follows.
Assume that we have infinitely many disjoint circles $c_i$ in the unit disc $\Delta$ 
that accumulate on all of the unit circle $\partial \Delta$. 
Denote the common exterior of the circles $\{c_i\}$ in $\Delta$ by $\Delta'$.
We prepare a torus $T_i$ with one  boundary curve for each $i$
and glue $T_i$ to $\Delta'$ along $c_i$ identified with $\partial T_i$.
The resulting Riemann surface is $R$ of infinite genus with only one end.  
Such a Riemann surface is sometimes called a Loch Ness monster (see Figure \ref{Loch Ness}).
A survey on historical and recent developments of researches on these surfaces is in \cite{AR1}.

%Pair the circles $C_i$ and $C'_i$
%%in some way 
%and for each pair choose a 
%M\"obius transformation $g_i$ that takes the exterior of $C_i$ to the interior of $C'_i$. 
%Let $G$ be the group generated by these elements. 
%Note that the unit disc minus the interiors of the circles with the identification of M\"obius transformations
%is a Riemann surface $R$ of infinite genus with one end.  
%Moreover, by a suitable choice of $C_i$ and $C'_i$, we may assume that
%the hyperbolic lengths of the simple closed geodesics $\gamma_i$ corresponding to $C_i$ and $C'_i$
%tend to zero as $i \to \infty$.
%The group $G$ is an infinitely generated Kleinian group with the unit circle being contained in the limit set. Hence  
%this is {\em not} a classical Schottky group.  

%%%%%%%%%%%%FIGURE%%%%%%%%%
\begin{figure}[hbtp]
\centering
\AffixLabels{\centerline{\epsfig{file = 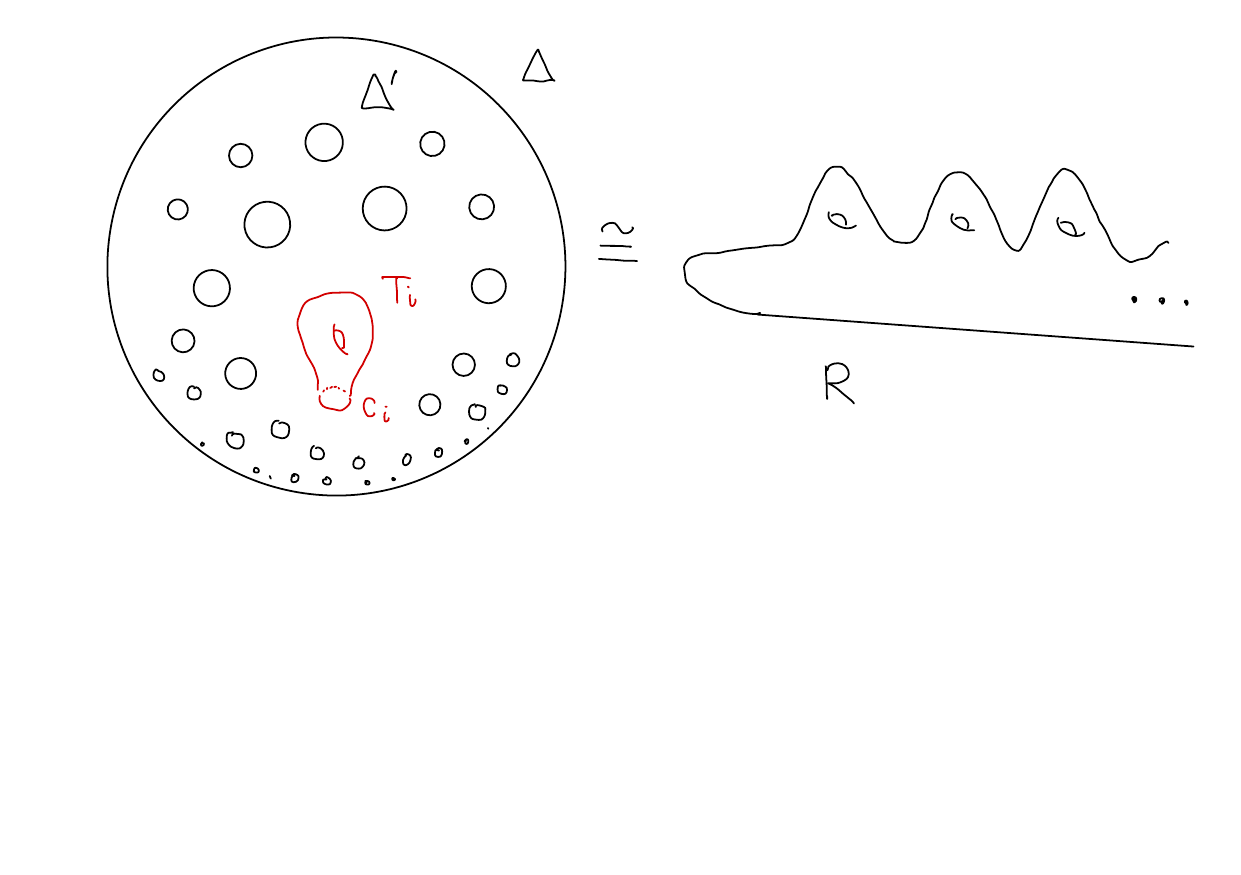,width=15.0cm} }}
\vspace{-3.5cm}
\caption{Loch Ness: Infinite genus with one end.}
\label{Loch Ness}
\end{figure}
%%%%%%%%%%%%%%FIGURE%%%%%%%%%

Now suppose that $R$ is quasiconformally equivalent to a classical Schottky uniformization.  
We want to argue that this cannot happen. Let $G$ be the uniformizing Schottky group. 
By the construction of $R$, we see that the space of ends of $R$ is homeomorphic to
the set $\Lambda'$ of accumulation points of the admissible circle configuration $\mathcal C$ for $G$.
Since $R$ has only one end, $\Lambda'$ must be a single point, which is assumed to be  infinity $\infty$. 
%and since the lengths of the simple closed geodesics $\gamma_i$
%tend to zero [Remark: only from the condition that $R$ has one end, we cannot deduce the next condition.
%So the degeneration of the simple closed geodesics is assumed and we need a proof by using this condition],
%the admissible circle configuration for $\Gamma$ must accumulate to a single point (assume that this point is infinity).   
%Denote the quasiconformal map by $f: R \rightarrow \Omega(\Gamma)/\Gamma$. 
We want to claim that there is no quasiconformal map between $R$ and $\Omega(G)/G$.
An intuition for this is based on the fact that the disc is not quasiconformally equivalent to the plane, 
but for a complete argument there remains a problem. 

We remark that by Theorem \ref{thm: classical schottky uniformize} there is a topological
uniformization of $R$ by a classical Schottky group $G$. An explicit
representation of generators of $G$ can be found in Arredondo and Ram\'irez Maluendas \cite{AR2}.

For the proof of the claim above, we use somewhat technical arguments based on the results in \cite{MS}.
%[By the remark above, the statements in the claims below need change accordingly.]

\begin{definition}
{\rm
We say that a Riemann surface $R$ is of {\it S-type} if there is no other Riemann surface $\widehat R$
strictly larger than $R$ such that the inclusion map $\iota:R \hookrightarrow \widehat R$ is a homotopy equivalence.
We call such an extension of $R$ to $\widehat R$ homotopic extension.
}
\end{definition}

%\begin{thm}\label{S-type}
%Let $R$ be a Riemann surface having at most countably many ends and not of S-type.
%Then $R$ is not quasiconformally equivalent to a Riemann surface uniformized by a classical Schottky group.
%\end{thm}
%
%The property of a Riemann surface having at most countably many ends is quasiconformally invariant.
%It was proved in \cite[Th.3]{MS} that being of S-type is also quasiconformally invariant.
%Hence, for the statement of the theorem, we have only to show 
%the following proposition.

Concerning a Schottky uniformization, 
the following claim gives a sufficient condition for being of S-type.
We note that S-type is a quasiconformally invariant condition as is shown in \cite[Th.3]{MS}.
%[Note: ``classical'' does not seem necessary in the next lemma.] 

\begin{thm}\label{S-type}
Let $G$ be a  classical Schottky group 
with admissible configuration $\mathcal{C}$ of circles.
If the set $\Lambda'$ of accumulation of $\mathcal{C}$ is countable, then
the Riemann surface $R=\Omega(G)/G$ uniformized by $G$ is of S-type.
\end{thm}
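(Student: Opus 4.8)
The plan is to argue by contradiction. Suppose $R=\Omega(G)/G$ is not of S-type, so there is a strictly larger Riemann surface $\widehat R$ and a homotopic extension $\iota\colon R\hookrightarrow\widehat R$; I want to show that the countability of $\Lambda'$ forces $\widehat R\setminus\iota(R)=\emptyset$. The first step is to lift the extension to the planar covering $\Omega(G)\to R$. If $\phi\colon\pi_1(R)\to G$ denotes the surjection whose kernel $K$ determines this (genus-zero) covering $\Omega(G)=\widetilde R/K$, then the isomorphism $\iota_*\colon\pi_1(R)\to\pi_1(\widehat R)$ carries $K$ to a subgroup $\widehat K$ and produces a planar covering $\widehat\Omega\to\widehat R$ together with a $G$-equivariant homotopy equivalence $\Omega(G)\hookrightarrow\widehat\Omega$ satisfying $\widehat\Omega/G=\widehat R$. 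Since $\widehat\Omega$ has genus zero, Koebe's uniformization realizes it conformally as a domain in $\widehat{\mathbb C}$; restricting to $\Omega(G)=\widehat{\mathbb C}\setminus\Lambda(G)$ gives a conformal embedding $F\colon\Omega(G)\to\widehat{\mathbb C}$, on whose image the deck group $G$ acts conformally. In these terms a homotopic extension of $R$ is exactly a strict, $G$-equivariant enlargement of $\Omega(G)$ obtained by adjoining points of $\Lambda(G)$.

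Next I would pin down where this enlargement can occur. Because $\iota$ is a homotopy equivalence, no essential loop of $\Omega(G)$ may bound in $\widehat\Omega$; in particular none of the discs $\Int(C_i)$ or their $G$-translates may be filled in, so no full neighborhood of a limit point can be adjoined. The only admissible new points are therefore ideal boundary points approached through the common exterior $\Ext(\mathcal C)$, and such points accumulate precisely on the circle-accumulation set $\Lambda'$ (compare Lemma \ref{end}; here one only needs that exterior approaches accumulate on $\Lambda'$, for which $\Lambda'\subset\Lambda(G)$ is closed by Lemma \ref{lem: basics2}). Hence the added set projects into the countable set $\Lambda'$ and lifts to the countable orbit $G\cdot\Lambda'$. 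The problem is thus reduced to showing that $F$ cannot be extended conformally and injectively across any end lying over $\Lambda'$. This is where countability does its work: $\Lambda'$ is a countable closed subset of a Cantor set, hence scattered, so its Cantor--Bendixson derivatives vanish at some countable ordinal, and I would run a transfinite induction that removes isolated ends first and then treats their accumulations.

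The technical core, and the place where the results of \cite{MS} are invoked, is the base case together with its propagation through transfinite limits: one must prove that an end of $R$ sitting over an isolated point $\zeta\in\Lambda'$ is conformally complete, i.e.\ admits no strict homotopic enlargement. The mechanism I would use mirrors the removability argument behind Theorem \ref{uniqueness}: extending $F$ across $\zeta$ would make it conformal on a one-sided neighborhood of $\zeta$ reached through $\Ext(\mathcal C)$, while injectivity and $G$-equivariance must be maintained across the uncountably many nested limit points clustering at $\zeta$ --- points that are \emph{not} ends of $R$ and therefore cannot be adjoined --- and this incompatibility forces the enlargement at $\zeta$ to be empty. I expect this conformal analysis at the ends to be the main obstacle, precisely because Theorem \ref{S-type} assumes no bounded pants decomposition, so one cannot appeal to uniform moduli or to removability of the whole limit set and must instead extract rigidity from the countable (scattered) structure of $\Lambda'$ alone. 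It is exactly the failure of this scatteredness for an uncountable $\Lambda'$ that the examples of the next section will exploit to produce surfaces that are not of S-type.
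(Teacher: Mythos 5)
Your overall strategy (lift the homotopic extension to the planar cover, localize the adjoined points over $\Lambda'$, then exploit countability) is reasonable in outline, but there is a genuine gap exactly where you acknowledge it: the ``technical core'' --- that no strict homotopic enlargement can occur at an end lying over an isolated point of $\Lambda'$, and that this propagates through the Cantor--Bendixson derivatives --- is never proved. The sentence ``this incompatibility forces the enlargement at $\zeta$ to be empty'' is an assertion, not an argument; nothing in the proposal identifies what obstructs adjoining ideal boundary at an isolated accumulation point, nor how non-enlargeability at individual ends (a local statement) assembles into non-enlargeability of $R$ (a global one) across the transfinite limit stages. A secondary soft spot is the localization step itself: after re-uniformizing $\widehat\Omega$ by Koebe, the embedding $F$ of $\Omega(G)$ into $\widehat{\mathbb C}$ is a priori not M\"obius, so identifying the adjoined set with a subset of $G\cdot\Lambda'$ presupposes control of the ideal boundary under conformal re-embedding that is close to the rigidity you are trying to establish.

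The paper closes this gap with potential theory rather than a transfinite induction. By Sakai's theorem \cite{Sa} together with \cite{MS}, if $R$ is not of S-type there is a disc $U \subset R$ with D-ideal boundary, which yields a strict homotopic enlargement of the fundamental domain $D=\Ext(\mathcal C)^{\circ}$ preserving the boundary circles; filling in the discs bounded by those circles turns $D$ into $\widehat{\mathbb C}\setminus\Lambda'$, which would then also admit a strict homotopic enlargement. But a surface admitting such an enlargement carries a Green function by \cite[Th.4]{MS}, whereas $\widehat{\mathbb C}$ minus a countable closed set admits none --- this is precisely where countability enters, replacing your proposed induction by a single classification-theoretic fact. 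If you want to salvage your route, the missing ingredient is essentially this same removability statement (countable closed sets are negligible for Green functions); without importing it, the base case and limit stages of your induction remain unproved.
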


\begin{proof}
%Let $R$ be
%a Riemann surface having at most countably many ends is uniformized by a classical Schottky group $G$
%with admissible configuration $\mathcal{C}$ of circles.
Let $D=\Ext (\mathcal{C})^\circ \subset \widehat {\mathbb C}$ 
be the fundamental domain for $G$. The Riemann surface $R$ is obtained by
pasting the corresponding circles of $\mathcal{C}$, which are on $\partial D$. 
%Hence $F$ can be regarded as
%a subdomain of $R$ such that $\overline F=R$ and $F=R-\partial F$.

Suppose that $R$ is not of S-type. By a theorem of Sakai \cite{Sa} (see also the main theorem of \cite{MS}),
we have a simply connected domain $U \subset R$ 
(called a disc with D-ideal boundary)
and a conformal map $\varphi$ of $U$ into the unit disc $\Delta$ with
$\varphi(U) \subsetneq \Delta$ such that $\varphi$ extends continuously to the relative boundary $\partial_R U$ of
$U$ in $R$ with $\varphi(\partial_R U) \subset \partial \Delta$. Then,
adding $\Delta \setminus \varphi(U)$ to $R$ under the identification by $\varphi$, we obtain
a Riemann surface $\widehat R$ with
$R \subsetneq \widehat R$ such that
the inclusion map $\iota:R \hookrightarrow \widehat R$ is a homotopy equivalence.

From the above construction of $\widehat R$, we see that 
there is a bordered domain $\widehat D$ such that $D$ is conformally embedded strictly in $\widehat D$
keeping the correspondence of the boundary circles. Moreover, this embedding is a homotopy equivalence.
In these circumstances,
if we fill discs inside of all boundary circles of $D$, the resulting Riemann surface, which is
$\widehat {\mathbb C}$ minus a countable many points, can be strictly embedded into a larger Riemann surface
in a homotopically equivalent way. However, we know that $\widehat {\mathbb C}$ minus a countable many points
admits no Green function as a Riemann surface. 
On the other hand, admitting a homotopic extension to a strictly larger Riemann surface, 
this is not of S-type, and in particular 
it admits the Green function by \cite[Th.4]{MS}. This is a contradiction, which proves that $R$ is of S-type.
\end{proof}

In order to verify the above assumption on $\Lambda'$ for the admissible configuration of circles $\mathcal C$,
we show the following claim by means of Lemma \ref{end}.

\begin{lem}\label{end2}
Let $R$ be a Riemann surface of infinite genus with only one end (non-planar).
Assume that there is a family of disjoint simple closed geodesics $\{\gamma_j\}_{j \in \mathbb N}$ in $R$
that satisfies the following conditions:
\begin{itemize}
\item[(1)]
$R \setminus \bigcup_j \gamma_j$ is a planar domain $D$, which we regard as a bordered hyperbolic surface;
\item[(2)]
the hyperbolic distances in $D$ between the boundary components $L_j$ and $L'_j$ of $\partial D$
corresponding to $\gamma_j$ are uniformly bounded;
\item[(3)]
the hyperbolic lengths of $\gamma_j$ are uniformly bounded.
\end{itemize}
Let $G$ be any classical Schottky group uniformizing $R$.
Then, the set $\Lambda'$ of accumulation 
of the admissible  circle configuration $\mathcal{C}=\{C_i ,C_i^{\prime}\}_{i \in \mathbb N}$ 
for $G$ is homeomorphic to the end space of 
$R=\Omega(G)/G$.
\end{lem}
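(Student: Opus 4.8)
The plan is to reduce the assertion to Lemma \ref{end}, whose conclusion is exactly that $\Lambda'$ is homeomorphic to the end space of $\Omega(G)/G$ once the configuration $\mathcal C$ satisfies condition \eqref{*}. Since $R$ has a single end, this will simultaneously identify $\Lambda'$ with a one-point set, which is what is needed for the subsequent application of Theorem \ref{S-type}. By the ``in particular'' clause of Lemma \ref{end}, it suffices to verify that the hyperbolic distances in $\mathbb H^3$ between the totally geodesic planes spanning $C_i$ and $C_i'$ are uniformly bounded from above. Thus the entire content of the lemma comes down to extracting such a uniform bound from conditions (1)--(3).

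First I would work with the intrinsic hyperbolic metric of $R=\Omega(G)/G$ and exploit its conformal invariance, so that the planar domain cut out by the configuration, namely the fundamental domain $\Ext(\mathcal C)^{\circ}$, carries the same hyperbolic geometry as the planar domain $D$ of condition (1) up to the conformal identifications. Condition (3), together with the collar lemma, produces around each boundary geodesic a collar of definite width; transported to the circle configuration and combined with Lemma \ref{grotzsch}, this yields on each side of $C_i$ and of $C_i'$ a round annulus whose conformal modulus is bounded below independently of $i$. This uniform ``room'' around the circles is precisely what makes the ensuing geometric comparisons uniform in $i$, in the same spirit as the Claim proved in Theorem \ref{thm: qc uniformization}.

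Next I would use condition (2). Because the hyperbolic metric is a conformal invariant, the uniform upper bound on the hyperbolic distance in $D$ between the paired boundary components $L_j$ and $L_j'$ becomes a uniform upper bound on the hyperbolic distance between $C_i$ and $C_i'$ measured in the circle domain. A hyperbolic path of bounded length joining $C_i$ to $C_i'$ lifts to a path in $\mathbb H^3$ joining the plane spanning $C_i$ to the plane spanning $C_i'$; invoking the uniform collars of the previous step, which keep the two- and three-dimensional hyperbolic geometries comparable near each circle, I would conclude that the hyperbolic distance between the two spanning planes is uniformly bounded above. This is exactly the criterion in Lemma \ref{end} guaranteeing \eqref{*}, and applying that lemma then delivers the homeomorphism between $\Lambda'$ and the end space of $R$.

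The main obstacle is the passage from the intrinsic data on $R$ -- the geodesic system $\{\gamma_j\}$ and the bounds (2) and (3) -- to the configuration circles of the given uniformization $G$, since a priori the curves obtained by projecting the $C_i$ need not coincide with the $\gamma_j$; controlling this requires the quantitative moduli estimates of Lemma \ref{grotzsch} and Proposition \ref{annulus} together with a careful comparison of the planar hyperbolic metric and the three-dimensional hyperbolic metric in a neighborhood of a circle carrying a definite collar. Once these comparisons are established uniformly in $i$, the reduction to Lemma \ref{end} is routine, and the conclusion that $\Lambda'$ is a single point homeomorphic to the one-point end space of $R$ follows immediately.
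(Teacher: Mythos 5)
Your overall strategy---reduce to Lemma \ref{end} by verifying condition \eqref{*}---is the same as the paper's, but the step where you actually invoke hypotheses (2) and (3) has a genuine gap that you name without closing. You write that condition (2) ``becomes a uniform upper bound on the hyperbolic distance between $C_i$ and $C_i'$ measured in the circle domain'' by conformal invariance. This transfer only makes sense if the curve $c_i \subset R$ obtained by projecting $C_i \cup C_i'$ is freely homotopic to one of the $\gamma_j$: the planar domain $D = R\setminus\bigcup_j\gamma_j$ of hypothesis (1) and the planar domain cut out by the configuration of the \emph{given, arbitrary} Schottky group $G$ are a priori different surfaces, obtained by cutting $R$ along different curve systems, so the bound in (2) says nothing directly about $C_i$ and $C_i'$. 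You flag exactly this as ``the main obstacle,'' but the tools you propose to overcome it---Lemma \ref{grotzsch}, Proposition \ref{annulus}, and a local comparison of the two- and three-dimensional hyperbolic metrics near a circle---are metric estimates in a neighborhood of a single circle and cannot detect the free homotopy class of $c_i$ relative to $\{\gamma_j\}$.

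The paper closes this gap with a topological dichotomy that is absent from your proposal: either $c_i$ is freely homotopic to some $\gamma_j$, in which case hypothesis (2) applies, or $c_i$ meets some $\gamma_j$ transversely, in which case a lift of the uniformly short geodesic $\gamma_j$ (hypothesis (3)) gives a path of bounded hyperbolic length in $\Omega(G)$ joining $C_i$ to $C_i'$. Thus hypothesis (3) is used to bound $d(C_i,C_i')$ in the second case of the dichotomy, not---as in your outline---merely to manufacture uniform collars. A secondary, more cosmetic difference: the paper bounds the hyperbolic distance between $C_i$ and $C_i'$ inside $\Omega(G)$ and deduces \eqref{*} from that, whereas you route through the ``in particular'' clause of Lemma \ref{end} on distances between the geodesic planes in $\mathbb{H}^3$; your route needs the extra two-to-three-dimensional comparison you allude to, which the paper's route avoids.
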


\begin{proof}
Let $G$ be a classical Schottky group with admissible circle configuration $\mathcal{C}=\{C_i ,C_i^{\prime}\}$
uniformizing $R$. We will show that the hyperbolic distances $d(C_i, C'_i)$ between $C_i$ and $C_i^{\prime}$ in $\Omega(G)$
are uniformly bounded. Then, it is easy to see that assumption \eqref{*} of Lemma \ref{end} for $\mathcal{C}$ is satisfied,
and the statement follows from this lemma.

We consider the simple closed curves $c_i$ on $R$ corresponding to $C_i ,C_i^{\prime}$.
If $c_i$ is freely homotopic to some $\gamma_j$, then by assumption (2),
$d(C_i, C'_i)$ is bounded by some uniform constant.
If $c_i$ is freely homotopic to no such $\gamma_j$, then it should intersect some $\gamma_j$ transversely.
In this case, $d(C_i, C'_i)$ is uniformly bounded by assumption (3).
\end{proof}

We remark that the existence of a curve family $\{\gamma_j\}$ in $R$ satisfying the above assumptions
is a quasiconformally invariant condition.

\begin{exam}\label{Ex1}{\rm
We will construct a Riemann surface $R$ of infinite genus with only one end (non-planar) that is not of S-type
and that satisfies the assumptions of Lemma \ref{end2}.
Then by Theorem \ref{S-type}, $R$ is quasiconformally equivalent to
no Riemann surface uniformized by a classical Schottky group.

Let $\Delta$ be the unit disc and let $U \subsetneq \Delta$ be a Jordan domain
such that the relative boundary $\partial_\Delta U \subset \Delta$ consists of
countably many Jordan curves and $E=\partial U \cap \partial \Delta$ is totally disconnected.
Moreover, we can choose $U$ so that
it is a disc with D-ideal boundary, that is,
if we map $U$ conformally onto the unit disc, 
the image of $E$ on the unit circle does not belong to class $N_D$.
Here a compact subset on $\mathbb C$ belongs to $N_D$ 
if its complement in $\mathbb C$ admits no non-constant Dirichlet finite holomorphic functions.

We choose infinitely many disjoint circles $\{c_i\}$ in $\Delta \setminus \overline U$ 
that accumulate on all of the unit circle $\partial \Delta$. Let $D$ be the common exterior of
these circles in $\Delta$.
Then, we choose once-holed tori $\{T_i\}$ of uniform geometry and paste each $T_i$ along $c_i$
as explained before so that the resulting Riemann surface $R$ satisfies the assumptions of Lemma \ref{end2}.
Since $R$ contains a disc $U$ with D-ideal boundary, it is not of S-type.

Suppose that there is a classical Schottky group $G$ such that $R^*=\Omega(G)/G$ is quasiconformally
equivalent to $R$. We see that $R^*$ is not of S-type and $R^*$ satisfies the assumptions of 
Lemma \ref{end2}. However, Theorem \ref{S-type} together with this lemma shows that 
$R^*$ is of S-type. This contradiction implies that there is no such classical Schottky group $G$.}
\end{exam}

\subsection{Hyperbolic structures on a topological surface that are not unifor\-mized by a Schottky group}
We next give an alternative construction, for each topologically infinite type,  of a Riemann surface that can not have a Schottky uniformization.  We start with a necessary condition for a Riemann surface to have a Schottky uniformization. 
Suppose $X=\mathbb{H}/\Gamma$  is a Riemann surface, where
$\Gamma$ is a torsion-free Fuchsian group. The convex core of 
$X$, denoted by $C(X)$,  is the smallest geodesic closed subsurface containing all the closed geodesic.  While $C(X)$ may not be geodesically complete it does have a geometric completion by attaching possibly funnels or half-planes (see \cite{B-S}).   As a result,
since funnels also have embedded half-planes we have that if $C(X) \subsetneq X$, then $X$ contains a conformally 
embedded half-plane.  Note that $C(X)=X$ if and only if $\Gamma$ is a Fuchsian group of the first kind (that is,
the limit set of $\Gamma$ is all of the circle).

\begin{lem}\label{lem: geometric criteria}
If $S$ is an infinite genus Riemann surface with only non-planar ends, then any quasiconformal Schottky uniformization of $S$
satisfies $C(S)=S$ (that is, the Fuchsian group for $S$ is of the first kind).
\end{lem}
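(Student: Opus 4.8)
The plan is to reduce the statement to the model Schottky surface and then argue by contradiction. Being of the first kind, i.e.\ $C(X)=X$, is a quasiconformal invariant: a quasiconformal homeomorphism $f\colon S\to\Omega(G)/G$ lifts to an equivariant quasiconformal self-map of $\mathbb H$ conjugating the Fuchsian group of $S$ to that of $\Omega(G)/G$, and it extends to a homeomorphism of $\partial\mathbb H$ that carries one limit set onto the other; hence one Fuchsian group is of the first kind exactly when the other is. Thus it suffices to show that the Schottky surface $R_0=\Omega(G)/G$ is of the first kind. Suppose not. Then $C(R_0)\subsetneq R_0$, so by the completion of the convex core recalled above, $R_0$ contains a conformally embedded half-plane $Q$ with a free ideal boundary arc $\hat I$.

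Now I would transport this half-plane to the Schottky picture. Since $Q$ is simply connected it lifts homeomorphically, under the covering $\Omega(G)\to R_0$, to a domain $\widetilde Q\subset\Omega(G)=\widehat{\mathbb C}\setminus\Lambda(G)$ whose free ideal boundary is a prime-end arc with impression contained in $\Lambda(G)$. The existence of the free arc means that $R_0$ admits a homotopic conformal extension across $\hat I$---equivalently $R_0$ contains a disc with D-ideal boundary, so it is not of S-type---and lifting such an extension would extend $\Omega(G)$ across a portion of its frontier $\Lambda(G)$. The main obstacle is to show that this is impossible, and it is exactly here that the Schottky hypothesis enters: I would invoke the conformal non-extendability (removability) of the totally disconnected limit set of a Schottky group, the same phenomenon underlying Theorem \ref{S-type} and the He--Schramm circle-domain rigidity, formalized with the $N_D$/removability results of \cite{MS}. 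Concretely, $\Lambda(G)$ carries no arc (Lemma \ref{lem: basics2} together with total disconnectedness), and one must rule out that genus of $R_0$ accumulates on $\hat I$; using the nested-circle model of Section \ref{sec: nest sequences} and the end-space identification of Lemma \ref{end}, such accumulation would force limit points of $G$ to pile up along $\hat I$, placing $\hat I$ in $\Lambda(G)$ and contradicting its freeness. This yields the desired contradiction, so $R_0$ is of the first kind and $C(S)=S$.

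I note that the hypothesis of only non-planar ends is what excludes the easy scenario: had a component of $R_0\setminus C(R_0)$ been a funnel, it would be an annular neighborhood of a genus-zero, hence planar, end, already contradicting the end structure of $S$ (and of the quasiconformally equivalent $R_0$). The genuinely delicate case is the wandering half-plane, which by Example \ref{Ex2} can coexist with purely non-planar ends on a second-kind surface; ruling it out on a Schottky surface is precisely the content isolated above, and is where I expect the real work of the proof to lie.
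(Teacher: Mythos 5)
Your opening moves coincide with the paper's: first-kindness is a quasiconformal invariant (via the boundary extension of the lifted quasiconformal map), so it suffices to treat $X=\Omega(G)/G$ itself; if $C(X)\subsetneq X$ then by \cite[Cor.3.6]{B-S} $X$ contains a conformally embedded hyperbolic half-plane, and since that half-plane is simply connected it lifts to a conformally embedded half-plane inside $\Omega(G)$. At this point the paper's contradiction is immediate and elementary: $\Omega(G)$ is $\widehat{\mathbb{C}}$ with a totally disconnected closed set removed, and such a planar domain cannot contain a conformally embedded hyperbolic half-plane (it has no free ideal boundary arc; its covering Fuchsian group is of the first kind). No removability, no S-type, no end-space analysis is needed.

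Your proposal does not deliver this final step, and the substitute you sketch has genuine problems. First, the machinery you invoke is unavailable at the generality of the lemma: Theorem \ref{S-type} requires $\Lambda'$ countable, and the removability ($O_{AD}$, class $N_D$) of the limit set is established in this paper only for classical Schottky groups whose quotient admits a bounded pants decomposition (via \cite{MaiTani}); Lemma \ref{lem: geometric criteria} must apply to an arbitrary quasiconformal Schottky uniformization with neither hypothesis. Second, your intended contradiction --- that genus accumulating on the free arc $\hat I$ would ``place $\hat I$ in $\Lambda(G)$,'' violating total disconnectedness --- conflates the ideal boundary arc of the lifted half-plane (a set of prime ends) with a subset of $\widehat{\mathbb{C}}$: a nondegenerate prime-end arc can have a totally disconnected, even one-point, impression, so the fact that $\Lambda(G)$ contains no arc does not by itself exclude a free side. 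You acknowledge that this is ``where the real work of the proof lies,'' which is to say the decisive step is missing. (Your separate disposal of the funnel case via the absence of planar ends is correct but superfluous, since funnels contain embedded half-planes and are covered by the same argument.)
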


\begin{proof}
Suppose $X=\Omega(G) /G$ is a Schottky uniformization.
Now if $C(X) \subsetneq X$, then $X$ contains a conformally embedded hyperbolic half-plane $P$ (\cite[Cor.3.6]{B-S}). Since $P$ is simply connected it lifts to the Schottky cover $\Omega(G)$ as a conformally embedded hyperbolic half-plane. But this is a contradiction since $\Omega(G)$ is, by the definition of Schottky groups, 
$\widehat{\mathbb{C}}$ with a Cantor set removed. Thus, we have $C(X)=X$. If $S$ is quasiconformally equivalent to $X$, then
$C(S)=S$ by the fact that being of the first kind is a quasiconformal invariant.
\end{proof}

\begin{exam}\label{Ex2}
{\rm 
First, for each topologically infinite type surface construct a 
Riemann surface $\mathbb{H}/\Gamma$ in its homeomorphism class, where $\Gamma$ is of the  second kind. Roughly speaking, to construct such a surface start with a topological pants decomposition of the surface and choose a flute subsurface on it. Every infinite type surface has one. Now make the pants curves    geometric by setting the pants cuff lengths of the flute subsurface to get very large with zero twisting along the cuffs. The geometry on the complement  of the flute subsurface is arbitrary.  In this  case, by construction  there must be a nested sequence of geodesics that converge to a simple geodesic  which bounds a half-plane.  This guarantees that $\Gamma$ is of the second kind (see \cite{Ba} or \cite{B-S} for details).
Thus, this  Riemann surface can not be quasiconformally Schottky uniformized with respect to any admissible circle set by Lemma \ref{lem: geometric criteria}.
% and the fact that being of the second kind is a quasiconformal invariant.
}
\end{exam}


\begin{thebibliography}{99}

\bibitem{AR1}
J. Arredondo and C. Ram\'irez Maluendas,
{\it On the infinite Loch Ness monster},
Comment. Math. Univ. Carolin. {\bf 58} (2017), 465--479.

\bibitem{AR2}
J. Arredondo and C. Ram\'irez Maluendas,
{\it Geometric Schottky groups and non-compact hyperbolic surfaces with infinite genus}, 
Differential Geom. Appl. {\bf 76} (2021), Paper No. 101752.

\bibitem{Ba} A. Basmajian,
{\it Hyperbolic structures for surfaces of infinite type}, 
Trans. Amer. Math. Soc. {\bf 336} (1993), 421--444.

\bibitem{B-S} A. Basmajian and D. \v{S}ari\'{c},
{\it Geodesically complete hyperbolic structures}, 
Math. Proc. Cambridge Philos. Soc. {\bf 166} (2019), 219--242.


\bibitem{Bu} J. Button, {\it All Fuchsian Schottky groups are classical Schottky groups}, 
The Epstein birthday schrift, 117--125, Geom. Topol. Monogr., 1, Geom. Topol. Publ., Coventry, 1998. 

\bibitem {C-V} T. Cremaschi and F. Vargas Pallete, {\it Hyperbolic limits of Cantor set complements in the sphere}, 
Bull. Lond. Math. Soc. {\bf 54} (2022), 1104--1119. 

\bibitem{Gr} J. Gross, {\it A unique decomposition theorem for 3-manifolds with connected boundary}, 
Trans. Amer. Math. Soc. 142 (1969), 191--199. 


\bibitem{HS1}
Z. He and O. Schramm,
{\it Fixed points, Koebe uniformization and circle packing}, 
Ann. of Math. {\bf 137} (1993), 369--406.

\bibitem{HS2}
Z. He and O. Schramm,
{\it Koebe uniformization for ``almost circle domains''}, 
Amer. J. Math. {\bf 117} (1995), 653--667. 

\bibitem{LV}
O. Lehto and K. Virtanen,
{\it Quasiconformal mappings in the plane},
Springer, 1973.

\bibitem{M}
B. Maskit, {\it Kleinian Groups}, Grund. math. Wiss. 287, Springer, 1988.

\bibitem{MaiTani}
F. Maitani and M. Taniguchi,
{\it A condition for a circle domain and an
infinitely generated classical Schottky group},
Topics in Finite or Infinite Dimensional Complex Analysis, pp. 169--175, Tohoku Univ. Press, 2013.

\bibitem{MS}
K. Matsuzaki and H. Shiga, 
{\it Conformal conjugation of Fuchsian groups from the first kind to the second kind}, 
J. reine angew. Math. {\bf 476} (1996), 191--200.

\bibitem{M-T}
K. Matsuzaki and M. Taniguchi,
{\it Hyperbolic Manifolds and Kleinian Groups}, Oxford Univ. Press, 1998.

\bibitem{N-Y}
D. Ntalampekos and M. Younsi,
{\it Rigidity theorems for circle domains},
Invent. math. {\bf 220} (2020), 129--183.

\bibitem{Pur} N. Purzitsky, {\it Cutting and pasting of noncompact polygons with applications to Fuchsian groups}, Acta Math. {\bf 143} (1979), 233--250.

\bibitem{R}
I. Richards,
{\it On the classification of noncompact surfaces}, 
Trans. Amer. Math. Soc. {\bf 106} (1963), 259--269.

\bibitem{Sa}
M. Sakai, {\it Continuations of Riemann surfaces}, Canad. J. Math. {\bf 44} (1992), 357--367.

\bibitem{SN}
L. Sario and M. Nakai, {\it Classification Theory of Riemann Surfaces}, Springer, 1970.

\bibitem{Sch}
R. Schwartz, 
{\it The limit sets of some infinitely generated Schottky groups}, 
Trans. Amer. Math. Soc. {\bf 335} (1993), 865--875. 

\bibitem{Y}
M. Younsi,
{\it Removability, rigidity of circle domains and Koebe's conjecture},
Adv. Math. {\bf 303} (2016), 1300--1318.

\end{thebibliography}
\end{document}